\makeindex \setcounter{tocdepth}{2}
\theoremstyle{plain}
\newtheorem{theorem}{Theorem}[section]
\newtheorem{proposition}[theorem]{Proposition}
\newtheorem{corollary}[theorem]{Corollary}
\newtheorem{lemma}[theorem]{Lemma}
\newtheorem{question}[theorem]{Question}
\theoremstyle{definition}
\newtheorem{remark}[theorem]{Remark}
\def\bA{\mathbb{A}}
\def\bF{\mathbb{F}}
\def\bQ{\mathbb{Q}}
\def\bZ{\mathbb{Z}}
\def\A{\mathbf{A}}
\def\S{\mathbf{S}}
\def\cE{\mathcal{E}}
\def\cF{\mathcal{F}}
\def\cG{\mathcal{G}}
\def\cM{\mathcal{M}}
\def\cO{\mathcal{O}}
\def\cP{\mathcal{P}}
\def\cS{\mathcal{S}}
\def\cX{\mathcal{X}}
\def\cU{\mathcal{U}}
\def\fR{\mathfrak{R}}
\def\deg{\mathbf{deg}}
\def\SL{\mathbf{SL}}
\def\UM{\mathbf{UM}}
\def\SR{\mathbf{SR}}
\begin{document}

\title{Certain sets over function fields are polynomial families}

\author{Nguyen Ngoc Dong Quan}

\date{November 3, 2015}

\address{Department of Mathematics \\
         The University of Texas at Austin \\
         Austin, TX 78712 \\
         USA}

\email{\href{mailto:dongquan.ngoc.nguyen@gmail.com}{\tt dongquan.ngoc.nguyen@gmail.com}}

\maketitle

\tableofcontents

\begin{abstract}

In 1938, Skolem conjectured that $\SL_n(\bZ)$ is not a polynomial family for any $n \ge 2$. Carter and Keller disproved Skolem's conjecture for all $n \ge 3$ by proving that $\SL_n(\bZ)$ is boundedly generated by the elementary matrices, and hence a polynomial family for any $n \ge 3$. Only recently, Vaserstein refuted Skolem's conjecture completely by showing that $\SL_2(\bZ)$ is a polynomial family. An immediate consequence of Vaserstein's theorem also implies that $\SL_n(\bZ)$ is a polynomial family for any $n \ge 3$. In this paper, we prove a function field analogue of Vaserstein's theorem: that is, if $\A$ is the ring of polynomials over a finite field of odd characteristic, then $\SL_2(\A)$ is a polynomial family in 52 variables. A consequence of our main result also implies that $\SL_n(\A)$ is a polynomial family for any $n \ge 3$. 

\end{abstract}

\section{Introduction}

Let $\fR$ be a commutative ring with identity, and let $\cX$ be a subset of $\fR^h$. The set $\cX$ is said to be a \textit{polynomial family over $\fR$ with $d$ parameters} for some positive integer $d$ if there exist polynomials $\cP_1, \ldots, \cP_h \in \fR[x_1, \ldots, x_d]$ in $d$ variables $x_1, \ldots, x_d$ such that 
\begin{align*}
\cX = \cP(\fR^d),
\end{align*}
where $\cP$ is the polynomial map in $d$ variables $x_1, \ldots, x_d$ of the form
\begin{align*}
\cP(x_1, \ldots, x_d) = (\cP_1(x_1, \ldots, x_d), \ldots, \cP_h(x_1, \ldots, x_d)).
\end{align*}
We also say that $\cP$ is a \textit{polynomial parametrization of $\cX$}.

Determining whether a set in $\fR^h$ is a polynomial family has a long history dating back to the 17th century. For example, when $\fR = \bZ$, Lagrange's four-square theorem, n\'ee Bachet's conjecture, states that every nonnegative integer can be represented as the sum of four integer squares. Equivalently, the theorem says that the set $\bZ_{\ge 0}$ of nonnegative integers is a polynomial family with 4 parameters, and the polynomial $\cP \in \bZ[x_1, x_2, x_3, x_4]$ defined by
\begin{align*}
\cP(x_1, x_2, x_3, x_4) = x_1^2 + x_2^2 + x_3^2 + x_4^2
\end{align*}
is a polynomial parametrization of $\bZ_{\ge 0}$. 

In \cite[page {\bf23}]{Skolem}, Skolem conjectured that $\SL_n(\bZ)$ is not a polynomial family for any $n \ge 2$. Carter and Keller \cite{Carter-Keller} disproved this for all $n \ge 3$ by proving that $\SL_n(\bZ)$ is \textit{boundedly generated by the elementary matrices} for each $n \ge 3$, and thus is a polynomial family for each $n \ge 3$.

Recall that a group $\cG$ is said to be \textit{boundedly generated by a subset $\Gamma$ of $\cG$} if there exists a positive integer $\ell$ such that every element $g \in \cG$ can be written in the form
\begin{align*}
g = \gamma_1\ldots \gamma_r,
\end{align*}
where $r \le \ell$, and the $\gamma_i$ are elements of $\Gamma \cup \Gamma^{-1}$. We further say that $\cG$ is \textit{boundedly generated by the elementary matrices} if $\Gamma$ is the set of elementary matrices. 

When $\Gamma$ is invariant under the map $\phi : \cG \rightarrow \cG, g \mapsto g^{-1}$ and contains the identity element of $\cG$, then $\Gamma \cup \Gamma^{-1} = \Gamma$, and one can write
\begin{align*}
\cG = \underbrace{\Gamma \cdot \Gamma \cdots \Gamma}_\text{{$r$ copies of $\Gamma$}}.
\end{align*}

When $\Gamma$ is a finite union of cyclic groups, the above definition leads to the following: \textit{$\cG$ is said to have bounded generation if there exist cyclic subgroups $\Gamma_1, \ldots, \Gamma_h$ of $\cG$ for some integer $h \ge 1$ such that $\cG = \Gamma_1\Gamma_2\cdots \Gamma_h$.}

Fix an integer $n \ge 2$. For any integers $i, j$ with $1 \le i \ne j \le n$ and $\alpha \in \bZ$, let $\cE_{i, j}(\alpha)$ be the matrix in $\SL_n(\bZ)$ such that all the entries on the diagonal are $1$, the $(i, j)$ entry is $\alpha$, and all other entries are $0$. Set 
\begin{align*}
\cE_{i, j}(\bZ) = \{\cE_{i, j}(\alpha) \; | \; \alpha \in \bZ\}.
\end{align*}
It is known that the $\cE_{i, j}(\bZ)$ are cyclic subgroups of $\SL_n(\bZ)$, and the set of elementary matrices in $\SL_n(\bZ)$ is a union of the $\cE_{i, j}(\bZ)$. Thus for a fixed integer $n \ge 2$, if  $\SL_n(\bZ)$ is boundedly generated by the elementary matrices, then $\SL_n(\bZ)$ has bounded generation. For example, Carter--Keller's theorem \cite{Carter-Keller} implies that $\SL_n(\bZ)$ has bounded generation for all $n \ge 3$.

It is well-known that $\SL_2(\bZ)$ is finitely generated, but not boundedly generated by the elementary matrices since it has a free subgroup of index $12$. Indeed, assume the contrary, i.e., $\SL_2(\bZ)$ is boundedly generated by the elementary matrices. It then follows from the above discussion that $\SL_2(\bZ)$ has bounded generation. The next result is well-known, and its proof can be found, for example, in \cite[Proposition 1.1]{Heald} .

\begin{proposition}
\label{Proposition-bounded-generation-of-finite-index-subgroups}

Let $\cG$ be a group, and $\cS$ be a subgroup of $\cG$ such that $[\cG : \cS]$ is finite. Then $\cG$ has bounded generation if and only if $\cS$ has bounded generation.

\end{proposition}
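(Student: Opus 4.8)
The plan is to prove both directions of the equivalence separately, passing through a normal subgroup to make the argument symmetric. First I would observe that it suffices to treat the case where $\cS$ is normal in $\cG$: given an arbitrary subgroup $\cS$ of finite index, let $\cN = \bigcap_{g \in \cG} g\cS g^{-1}$ be its normal core. Since $[\cG : \cS]$ is finite, $\cN$ is a normal subgroup of $\cG$ of finite index, and $\cN \subseteq \cS \subseteq \cG$ with both indices $[\cG : \cN]$ and $[\cS : \cN]$ finite. So if I can prove the statement for normal finite-index subgroups, then bounded generation of $\cG$ is equivalent to that of $\cN$, which in turn is equivalent to that of $\cS$, giving the general case.

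For the implication that bounded generation of $\cS$ forces bounded generation of $\cG$ (with $\cS$ now assumed normal, or even just of finite index): suppose $\cS = \Gamma_1 \cdots \Gamma_h$ with each $\Gamma_i$ cyclic. Choose a finite set of coset representatives $g_1, \ldots, g_m$ for $\cS$ in $\cG$. Then every $g \in \cG$ can be written as $g = g_k s$ for some $k$ and some $s \in \cS$, hence $g = g_k \gamma_1 \cdots \gamma_h$ with $\gamma_i \in \Gamma_i$. Each $g_k$ generates a cyclic subgroup $\langle g_k \rangle$, so writing $\cC_k = \langle g_k \rangle$ we get $\cG = \bigl(\bigcup_k \cC_k\bigr)\Gamma_1 \cdots \Gamma_h$. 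This is not yet a product of finitely many cyclic subgroups because of the union, but since there are only finitely many $g_k$, one can absorb the choice: $\cG = \cC_1 \cC_2 \cdots \cC_m \Gamma_1 \cdots \Gamma_h$ works, because any single $g_k$ lies in the product $\cC_1 \cdots \cC_m$ (take the identity in the other factors), and the remaining tail $\Gamma_1 \cdots \Gamma_h$ still produces every element of $\cS$. Hence $\cG$ is a product of $m + h$ cyclic subgroups.

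For the reverse implication, that bounded generation of $\cG$ forces bounded generation of $\cS$: here I would use the normality of $\cN$ (the reduction from the first paragraph) crucially, and it is cleanest to work directly with $\cN$ in place of $\cS$. Suppose $\cG = \cC_1 \cdots \cC_h$ with $\cC_i = \langle c_i \rangle$ cyclic. The subgroup $\cN \cap \cC_i$ has finite index in $\cC_i$ (since $\cN$ has finite index in $\cG$), so $\cN \cap \cC_i = \langle c_i^{n_i} \rangle$ for some positive integer $n_i$, and $\cC_i$ is covered by finitely many cosets $c_i^{j}(\cN \cap \cC_i)$, $0 \le j < n_i$. Now take any $x \in \cN$; write $x = c_1^{a_1} c_2^{a_2} \cdots c_h^{a_h}$. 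One peels off the powers one at a time, rewriting $c_i^{a_i} = c_i^{r_i} d_i$ with $0 \le r_i < n_i$ and $d_i \in \cN \cap \cC_i$, and then commutes the bounded "junk" elements $c_i^{r_i}$ to the left using normality of $\cN$: conjugating $d_j$ by a product of $c_i^{r_i}$'s keeps it inside $\cN$, though not inside $\cN \cap \cC_j$ any longer, which is the subtlety. So the naive peeling does not immediately terminate. The standard fix, which I would carry out, is this: the finitely many "prefix" elements $w = c_1^{r_1} \cdots c_h^{r_h}$ (there are at most $n_1 \cdots n_h$ of them) that can arise form a finite set $W \subseteq \cG$; the relation $x \in \cN$ forces the prefix $w$ to lie in $\cN$ as well, hence in the finite set $W \cap \cN$; and one shows by a counting/rewriting argument that every $x \in \cN$ can be written as a product of a bounded number of elements, each a conjugate $g c_i^{k} g^{-1}$ of a power of some generator by an element of the finite set $W$, together with correction terms from $W \cap \cN$. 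Since $W$ is finite, the set of all such conjugates $\{ w c_i^{k} w^{-1} : w \in W,\ c_i^{k} \in \cN \cap \cC_i \}$ is a finite union of cyclic subgroups of $\cN$, and we have exhibited $\cN$ as a bounded-length product of these, i.e. $\cN$ has bounded generation.

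The main obstacle is precisely this last step: when we rewrite a word in $\cG$ and slide the bounded "remainder" factors to one side, the cyclic factors get conjugated by elements outside themselves, so the argument does not close unless one tracks that all these conjugating elements lie in a single fixed finite set and that conjugation stays inside $\cN$ (which is where normality of the core is used). Making this bookkeeping precise — typically via an induction on the number $h$ of cyclic factors, or via the observation that $\cN$ acting on the finite coset space $\cG/\cN$ gives control on the possible prefixes — is the technical heart of the proof; everything else is routine coset counting. I would cite the standard treatment for the detailed rewriting, as the excerpt already does by referencing \cite[Proposition 1.1]{Heald}.
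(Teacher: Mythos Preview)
The paper does not actually prove this proposition: it states that the result is well-known and points to \cite[Proposition~1.1]{Heald} for a proof. So there is no ``paper's own proof'' to compare against, and your outline is in fact more than what the paper provides.

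Your argument is the standard one and is essentially correct. The reduction to the normal core and the easy direction ($\cS$ boundedly generated $\Rightarrow$ $\cG$ boundedly generated) are clean. For the harder direction your sketch is on the right track but can be made sharper than you indicate: with $\cG=\cC_1\cdots\cC_h$, $\cC_i=\langle c_i\rangle$, $n_i=[\cC_i:\cN\cap\cC_i]$, write any $x\in\cN$ as $c_1^{a_1}\cdots c_h^{a_h}$, split $c_i^{a_i}=c_i^{r_i}d_i$ with $0\le r_i<n_i$ and $d_i\in\cN\cap\cC_i$, set $w_i=c_1^{r_1}\cdots c_i^{r_i}$, and telescope to obtain
\[
x=(w_1 d_1 w_1^{-1})(w_2 d_2 w_2^{-1})\cdots(w_{h-1} d_{h-1} w_{h-1}^{-1})\,w_h\,d_h.
\]
Each factor $w_i d_i w_i^{-1}$ lies in the cyclic subgroup $w_i\langle c_i^{n_i}\rangle w_i^{-1}\subseteq\cN$, the element $w_h$ lies in the finite set $W\cap\cN$ (where $W$ is the finite set of all such prefixes), and $d_h\in\langle c_h^{n_h}\rangle$. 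Since only finitely many prefixes $w_i$ occur, this exhibits every element of $\cN$ as a product of $h+1$ elements drawn from a fixed finite family of cyclic subgroups of $\cN$; padding with identities as in your easy direction then gives $\cN$ as a product of a fixed finite sequence of cyclic subgroups. This removes the vagueness in your final paragraph without any induction or coset-action argument.
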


Let $\cS$ be the subgroup of $\SL_2(\bZ)$ that is generated by $\begin{pmatrix} 1 & 2 \\ 0 & 1\end{pmatrix}$ and $\begin{pmatrix} 1 & 0 \\ 2 & 1 \end{pmatrix}$. Sanov's theorem tells us that $\cS$ is a free group, and $[\SL_2(\bZ) : \cS] = 12$. By Proposition \ref{Proposition-bounded-generation-of-finite-index-subgroups}, $\cS$ has bounded generation, which is a contradiction since nonabelian free groups do not have bounded generation (see Tavgen' \cite{Tavgen}). Thus $\SL_2(\bZ)$ is not boundedly generated by the elementary matrices. It is worth mentioning here that Tavgen' \cite{Tavgen} even proved that if $F$ is either $\bQ$ or an imaginary quadratic field over $\bQ$, then the elementary matrices do not boundedly generate $\SL_2(\cO_F)$, where $\cO_F$ is the ring of integers of $F$.

The above discussion implies that one cannot expect to use the same arguments as Carter and Keller \cite{Carter-Keller} to disprove Skolem's conjecture for $\SL_2(\bZ)$. In fact, only recently, Vaserstein \cite{Vaserstein} refuted Skolem's conjecture completely by proving that $\SL_2(\bZ)$ is a polynomial family with $46$ parameters. As an immediate consequence, Vaserstein also showed that $\SL_n(\bZ)$ with $n \ge 3$ is a polynomial family with less parameters than in the work of Carter and Keller \cite{Carter-Keller}. Following the work of Vaserstein, it is not difficult to show that for a commutative ring $\fR$ satisfying the second Bass stable range condition (see Bass \cite{Bass} for this definition), if $\SL_2(\fR)$ is a polynomial family, then so is $\SL_n(\fR)$ for any $n \ge 3$. It is well-known (see Bass \cite{Bass}) that every Dedekind domain satisfies the second Bass stable range condition, and hence for such a domain $\fR$, it suffices to consider whether $\SL_2(\fR)$ is a polynomial family.

Now return to a general setting in which we fix a commutative ring $\fR$ with identity. The question as to whether $\SL_2(\fR)$ is a polynomial family can be rephrased in terms of the solutions of a Diophantine equation as follows. One can realize $\SL_2$ as a hypersurface in $\bA^4$ by
\begin{align}
\label{E-Diophantine-eqn-of-SL_2-in-the-introduction}
x_1x_2 - x_3x_4 = 1.
\end{align}
Then $\SL_2(\fR)$ is a polynomial family if and only if all the $\fR$-integral solutions of (\ref{E-Diophantine-eqn-of-SL_2-in-the-introduction}) can be obtained from a fixed polynomial parametrization with coefficients in $\fR$ by letting all the variables run through $\fR$. For example, Vaserstein's theorem says that all the integral solutions of (\ref{E-Diophantine-eqn-of-SL_2-in-the-introduction}) can be obtained from a fixed polynomial parametrization with $\bZ$-coefficients in 46 parameters by letting all the variables run through $\bZ$. 

It is natural to consider the solutions of a Diophantine equation in a more general ring than the ring $\bZ$ of integers. In this direction, it is natural to extend Vaserstein's theorem to a ring of integers in a number field or a function field. Before discussing related results in this direction, let us fix some notation. 

For each $h \ge 1$, we denote by $\cF_{2h}(m^{(1)}, \ldots, m^{(2h)}) \in \SL_2(\bZ[m^{(1)}, \ldots, m^{(2h)}])$ the polynomial matrix in $2h$ parameters defined by
\begin{align*}
\cF_{2h}(m^{(1)}, \ldots, m^{(2h)}) = \begin{pmatrix} 1 & m^{(1)} \\ 0 & 1 \end{pmatrix}\begin{pmatrix} 1 & 0 \\ m^{(2)} & 1 \end{pmatrix}\cdots \begin{pmatrix} 1 & m^{(2h - 1)} \\ 0 & 1 \end{pmatrix}\begin{pmatrix} 1 & 0 \\ m^{(2h)} & 1 \end{pmatrix}.
\end{align*}
Similarly, for each integer $h \ge 0$, we denote by $\cF_{2h + 1}(m^{(1)}, \ldots, m^{(2h + 1)}) \in \SL_2(\bZ[m^{(1)}, \ldots, m^{(2h + 1)}])$ the polynomial matrix in $2h + 1$ parameters defined by
\begin{align*}
\cF_{2h + 1}(m^{(1)}, \ldots, m^{(2h + 1)}) = \begin{pmatrix} 1 & m^{(1)} \\ 0 & 1 \end{pmatrix}\begin{pmatrix} 1 & 0 \\ m^{(2)} & 1 \end{pmatrix}\cdots \begin{pmatrix} 1 & m^{(2h - 1)} \\ 0 & 1 \end{pmatrix}\begin{pmatrix} 1 & 0 \\ m^{(2h)} & 1 \end{pmatrix}\begin{pmatrix} 1 & m^{(2h + 1)} \\ 0 & 1 \end{pmatrix}.
\end{align*}

Since the $\cF_n$ are defined over the integers, one can view the $\cF_n$ as elements in $\SL_2(\fR[m^{(1)}, \ldots, m^{(n)}])$, where $\fR$ is a commutative ring with $1$. When $\fR$ is the polynomial ring in one variable with coefficients in the finite field of $q$ elements, the definition of the $\cF_n$ agrees with that of the matrix maps, also denoted by $\cF_n$, in Subsection \ref{SS-F_h--and--G_h}.

In 1996, Zannier \cite{Zannier} proved that conditionally under the truth of the Generalized Riemann Hypothesis, $\cF_5$ is surjective over $\bZ[\sqrt{2}]$, which implies that $\SL_2(\bZ[\sqrt{2}])$ is a polynomial family with 5 parameters. In 2003, Zannier \cite{Zannier-2003} unconditionally showed that $\cF_5$ is surjective over $\cO_S$, which implies that $\SL_2(\cO_S)$ is a polynomial family with 5 parameters. Here $S = \{2, 3, \wp\}$ with $\wp$ being a prime such that $\wp \equiv 1 \pmod{4}$, and $\cO_S$ is the ring of $S$-integers in $\bQ$ defined by
\begin{align}
\label{Zannier-set-introduction}
\cO_S = \{q \in \bQ \; | \; \text{there exist nonnegative integers $\alpha_2, \alpha_3, \alpha_{\wp}$ such that $q2^{\alpha_2}3^{\alpha_3}\wp^{\alpha_{\wp}} \in \bZ$}\}.
\end{align}

Before discussing the work of Zannier in more detail, let us digress a moment to explain the relation between the maps $\cF_n$ and continued fractions. 

Let $\lambda_1, \ldots, \lambda_m$ be real numbers such that $\lambda_i \ge 1$ for each $2 \le i \le m$. For each integer $1 \le h \le m$, the symbol $[\lambda_1, \ldots, \lambda_h]$ is defined recursively by $[\lambda_1] = \lambda_1$, and $[\lambda_1, \lambda_2, \ldots, \lambda_h] = \lambda_1 + \dfrac{1}{[\lambda_2, \ldots, \lambda_h]}$. (Note that $[\lambda_2, \ldots, \lambda_h] > 0$ by the assumption that $\lambda_i \ge 1$ for each $i \ge 2$; so the symbol $[\lambda_1, \ldots, \lambda_h]$ is well-defined.) 

Assume now that the $\lambda_i$ lie in $\bZ$. Then the symbol $[\lambda_1, \ldots, \lambda_h]$ is the \textit{$h$-th convergent of the continued fraction}. It is clear that the $h$-th convergent is a rational number.

For each $1 \le h \le m$, let
\begin{align*}
\dfrac{P_h}{Q_h} = [\lambda_1, \ldots, \lambda_h],
\end{align*}
where $P_h, Q_h$ are relatively prime integers. We are only interested in a special case when $m = 2n$ for some positive integer $n$. If we let $P_0 = Q_{-1} = 1$ and $P_{-1} = Q_0 = 0$, then it is known that $P_h = \lambda_h P_{h - 1} + P_{h - 2}$ and $Q_h = \lambda_h Q_{h - 1} + Q_{h - 2}$ for each $h \ge 1$. Since $P_{2n}Q_{2n - 1} - P_{2n - 1}Q_{2n} = (-1)^{2n} = 1$, there exists an integer $\mu$ such that
\begin{align*}
\begin{pmatrix} 1 & 0 \\ -\lambda_1 & 1 \end{pmatrix}\begin{pmatrix} 1 & -\lambda_2 \\ 0 & 1 \end{pmatrix} \cdots \begin{pmatrix} 1 & 0 \\ -\lambda_{2n - 1} & 1 \end{pmatrix} \begin{pmatrix} 1 & -\lambda_{2n} \\ 0 & 1 \end{pmatrix}\begin{pmatrix} P_{2n} & Q_{2n} \\ P_{2n - 1} & Q_{2n - 1} \end{pmatrix} = \begin{pmatrix} 1 & \mu \\ 0 & 1 \end{pmatrix},
\end{align*}
and thus
\begin{align*}
\begin{pmatrix} P_{2n} & Q_{2n} \\ P_{2n - 1} & Q_{2n - 1} \end{pmatrix} = \begin{pmatrix} 1 & \lambda_{2n} \\ 0 & 1 \end{pmatrix}\begin{pmatrix} 1 & 0 \\ \lambda_{2n - 1} & 1 \end{pmatrix}\cdots \begin{pmatrix} 1 & \lambda_2 \\ 0 & 1 \end{pmatrix}\begin{pmatrix} 1 & 0 \\ \lambda_1 & 1 \end{pmatrix}\begin{pmatrix} 1 & \mu \\ 0 & 1 \end{pmatrix} = \cF_{2n + 1}(\lambda_{2n},\ldots, \lambda_1, \mu).
\end{align*}

When $n = 2$, Zannier \cite{Zannier} proved that the Generalized Riemann Hypothesis implies the surjectivity of the matrix map on the left-hand side of the above equation over $\bZ[\sqrt{2}]$. Hence $\cF_5$ is surjective over $\bZ[\sqrt{2}]$ by the above equation. Replaced $\bZ[\sqrt{2}]$ by $\cO_S$ with $\cO_S$ defined by (\ref{Zannier-set-introduction}), Zannier \cite{Zannier-2003} unconditionally obtained the same results.

In 2007, Morris \cite{Morris} provided details of a proof of the next result that is contained in an unpublished work of Carter, Keller, and Paige \cite{Carter-Keller-Paige}.

\begin{theorem}
\label{Theorem-Carter-Keller-Paige}
$(\text{Carter--Keller--Paige, see \cite{Carter-Keller-Paige} and \cite[Theorem 1.2]{Morris}})$

Let $F$ be a number field, and let $\cO_F$ be the ring of integers of $F$. Then $\SL_2(\cO_F)$ is boundedly generated by the elementary matrices if and only if $\cO_F$ has infinitely many units.

\end{theorem}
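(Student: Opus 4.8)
The plan is to prove the two implications separately. The forward implication follows at once from results already available, so I would dispose of it first; the reverse implication carries the weight, and I would follow the strategy of Carter--Keller--Paige. \emph{For the ``only if'' direction}: suppose $\SL_2(\cO_F)$ is boundedly generated by the elementary matrices and assume, toward a contradiction, that $\cO_F^\times$ is finite. By Dirichlet's unit theorem the rank of $\cO_F^\times$ equals $r_1 + r_2 - 1$, where $r_1$ and $r_2$ count the real embeddings and the conjugate pairs of complex embeddings of $F$; this rank vanishes exactly when $F = \bQ$ or $F$ is an imaginary quadratic field. In either case the theorem of Tavgen' quoted above asserts that the elementary matrices do not boundedly generate $\SL_2(\cO_F)$ --- a contradiction. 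Hence $\cO_F^\times$ is infinite.

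\emph{For the ``if'' direction}: assume $\cO_F^\times$ is infinite and fix a unit $\varepsilon$ of infinite order. I would first record the elementary reformulation that $\SL_2(\cO_F)$ is boundedly generated by the elementary matrices if and only if there is an integer $N$ for which the map $\cF_N \colon \cO_F^N \to \SL_2(\cO_F)$ is surjective: every element of $\cE_{1,2}(\cO_F) \cup \cE_{2,1}(\cO_F)$ is a value of $\cF_1$, consecutive factors of the same type merge, and inserting identity factors makes any product alternating, so a bound $\ell$ on the number of elementary factors yields such an $N$; the converse is clear. Thus the goal is a \emph{uniform} bound on the number of elementary row operations needed to reduce an arbitrary $A \in \SL_2(\cO_F)$ to the identity. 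I would use repeatedly that for every unit $u$ the matrix $\mathrm{diag}(u, u^{-1})$ is a product of at most six elementary matrices, the bound being independent of $u$ (write it via $w = \begin{pmatrix} 0 & 1 \\ -1 & 0 \end{pmatrix}$ and $\begin{pmatrix} 0 & u \\ -u^{-1} & 0 \end{pmatrix}$, each a product of three elementary matrices), and that $w$ itself is a product of three elementary matrices.

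Writing $A = \begin{pmatrix} a & b \\ c & d \end{pmatrix}$ with $ad - bc = 1$, so $a\cO_F + c\cO_F = \cO_F$, I would reduce $A$ to the identity as follows. (1) If $c = 0$ then $a \in \cO_F^\times$ and $A$ is already a bounded product of elementary matrices by the facts above, so assume $c \neq 0$. (2) Apply one operation $a \mapsto a + tc$ with $(a + tc)$ a prime ideal; this is possible by Dirichlet's theorem on primes in arithmetic progressions over $\cO_F$ (Chebotarev applied to the ray class field modulo $(c)$), since $(a)$ is a principal ideal coprime to $c$, so infinitely many prime ideals lie in its ray class and each such prime has a generator congruent to $a$ modulo $c$. (3) The top entry now generates a prime ideal $\fp$, so $\cO_F/\fp$ is a finite field with cyclic unit group; exploiting this together with the cheap availability of the diagonal matrices $\mathrm{diag}(\varepsilon^{k}, \varepsilon^{-k})$ for all $k$, transform the first column, in a bounded number of operations, into one whose lower entry is a unit. (4) Once the lower-left entry $c'$ is a unit, $\cE_{1,2}(-a(c')^{-1})$ clears the top entry, then $w$ and one diagonal matrix reduce the first column to $(1,0)^{\mathrm{T}}$, leaving an upper-triangular matrix with unit diagonal, which is again a bounded product of elementary matrices. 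Summing the uniformly bounded contributions of (1)--(4) yields the desired $N$, hence via the reformulation that $\cF_N$ is surjective over $\cO_F$ and that $\SL_2(\cO_F)$ is boundedly generated by the elementary matrices.

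\emph{The main obstacle} is Step (3): turning a residue-field computation in $\cO_F/\fp$ into a genuinely bounded reduction of the first column, with a bound independent both of the entries of $A$ and of the prime $\fp$ produced in Step (2). This is the heart of the Carter--Keller--Paige argument, as detailed by Morris, and it is precisely where the infinitude of $\cO_F^\times$ enters in an essential way: without a supply of extra units one recovers at best a Euclidean-type reduction of unbounded length, which is consistent with the failure of the statement for $\bZ$ and for imaginary quadratic fields. A secondary point to watch is that the Chebotarev input in Step (2) be free of class-group obstructions; this holds here because the ideal being perturbed, $(a)$, is already principal.
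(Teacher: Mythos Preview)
The paper does not prove this theorem; it is quoted from the literature with references to Carter--Keller--Paige and to Morris, so there is no in-paper proof to compare yours against.

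That said, your outline deserves comment. The ``only if'' direction is correct and complete as written. For the ``if'' direction, your Step~(3) is not merely the ``main obstacle''---it is a genuine gap you have not filled, and it cannot be filled along the lines you indicate. The plan (use Chebotarev to make the top-left entry generate a prime ideal $\fp$, then exploit the finite residue field together with the diagonal matrices $\mathrm{diag}(\varepsilon^k,\varepsilon^{-k})$ to force the lower entry to be a unit) is essentially the Cooke--Weinberger strategy; carrying it out requires the image of $\varepsilon$ in $(\cO_F/\fp)^\times$ to have large order, or some effective substitute, and controlling this uniformly in $\fp$ is an Artin-primitive-root type problem that is only known under GRH. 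Without GRH there is no known way to bound the number of steps in your Step~(3) independently of $A$.

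The actual Carter--Keller--Paige argument, as the paper itself remarks immediately after stating the theorem, proceeds quite differently: it relies on the Compactness Theorem of model theory, together with algebraic input on Mennicke symbols and reciprocity, to conclude bounded generation without ever producing an explicit bound. This is precisely why the paper emphasizes that the theorem yields no explicit $N$ and then asks whether an explicit bound exists. Your proposal, by contrast, is aimed at an explicit bound from the start; if your Step~(3) could be completed unconditionally it would answer that open question, which is strong evidence that the step is not a routine detail but the whole difficulty.
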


The Carter--Keller--Paige theorem provides a very large class of rings $\fR$ for which $\SL_2(\fR)$ is a polynomial family. In fact, the theorem proves that if $F$ is a number field such that $F \ne \bQ$ and $F$ is not an imaginary quadratic field over $\bQ$, then $\SL_2(\cO_F)$ is a polynomial family. Hence, in the number field setting, it remains to consider whether or not $\SL_2(\cO_F)$ is a polynomial family when $F$ is an imaginary quadratic field. 

Since $\bZ[\sqrt{2}]$ contains infinitely many units, Theorem \ref{Theorem-Carter-Keller-Paige} also provides a group-theoretic proof of Zannier's result that $\SL_2(\bZ[\sqrt{2}])$ is a polynomial family. Although Theorem \ref{Theorem-Carter-Keller-Paige} provides an unconditional proof of a corollary of Zannier's result, it does not give an explicit bound for the number of parameters as obtained in the work of Zannier \cite{Zannier}. 

The bounded number of elementary matrices needed to generate $\SL_2(\cO_F)$ in Theorem \ref{Theorem-Carter-Keller-Paige} depends on the Compactness Theorem in Model Theory (see \cite[Theorem 2.1.4]{Marker}). Thus Theorem \ref{Theorem-Carter-Keller-Paige} does not provide any explicit bound on the number of elementary matrices. It is natural to ask the following questions.

\begin{question}

\begin{itemize}

\item []

\item [(i)] Let $F$ be a number field, and let $\cO_F$ be the ring of integers of $F$. Assume that $\cO_F$ has infinitely many units. What is an explicit bound for the number of elementary matrices needed to generate $\SL_2(\cO_F)$?

\item [(ii)] Does there exist a positive integer $\cU$ such that for any number field $F$ with the ring of integers $\cO_F$ containing infinitely many units, the number of elementary matrices needed to generate $\SL_2(\cO_F)$ is less than $\cU$?

\end{itemize}

\end{question}

Let $F$ be a number field, and let $\cO_F$ be the ring of integers of $F$. Let $S$ be a finite set of primes of $F$ that contains all the Archimedean primes. Let $\cO_{S, F}$ be the ring of $S$-integers of $F$, and let $U_S$ denote the group of units in $\cO_{S, F}$. When $U_S$ is infinite, Cooke and Weinberger \cite{Cooke-Weinberger} proved that the Generalized Riemann Hypothesis implies that $\SL_2(\cO_{S, F})$ is a polynomial family with $9$ parameters. They further showed that if $F$ admits a real embedding, then $7$ parameters is sufficient. The results of Cooke-Weinberger \cite{Cooke-Weinberger} provide a conditional answer to the above questions under the truth of the Generalized Riemann Hypothesis. It is interesting if one can obtain another proof of Theorem \ref{Theorem-Carter-Keller-Paige} that does not use the Compactness Theorem in Model Theory. Such a proof should shed some light on the above questions from another viewpoint that may result in an unconditional answer to the above questions.

In Zannier \cite{Zannier} \cite{Zannier-2003}, the number of elementary matrices needed to generate $\SL_2(\bZ[\sqrt{2}])$ or $\SL_2(\cO_S)$ with $\cO_S$ defined by (\ref{Zannier-set-introduction}) is 5, which is quite small. It is possible, as remarked in Zannier \cite{Zannier-2003} that $5$ should be the smallest number of parameters needed over $\bZ[\sqrt{2}]$ or the rings $\cO_S$. This motivates the following question.

\begin{question}

If $\fR$ is a ring such that $\SL_2(\fR)$ is a polynomial family, what is the smallest number of parameters needed to polynomially parametrize $\SL_2(\fR)$? 

\end{question}

For each ring $\fR$ with $\SL_2(\fR)$ being a polynomial family, denote by $\cM(\fR)$ the smallest number parameters needed to polynomially parametrize $\SL_2(\fR)$. Then Theorem 1 in Zannier \cite{Zannier} shows that $\cM(\cO_K) \ge 4$ if $\cO_K$ is the ring of integers in a number field $K$. In particular, Vaserstein's theorem \cite{Vaserstein} combined with Zannier's theorem \cite{Zannier} imply that $4 \le \cM(\bZ) \le 46$. It is certainly interesting if one can find a precise value of $\cM(\fR)$, where $\fR$ is the ring of integers in a number field or a function field. 

Let $p$ be an \textit{odd} prime, and let $q$ be a power of $p$. Let $\A = \bF_q[T]$, where $\bF_q$ is the finite field with $q$ elements, and $T$ denotes an indeterminate. The main aim of this paper is to determine an upper bound for $\cM(\A)$; more precisely, our main goal in this paper is to prove the following.

\begin{theorem}
\label{Thm-the-1st-main-thm-in-the-introduction}
$(\text{See Theorem \ref{T-SL2(A)--is--a--polynomial-family--in--S--SL2(A)}})$

$\SL_2(\A)$ is a polynomial family with 52 parameters.

\end{theorem}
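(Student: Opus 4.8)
The plan is to transport to $\A = \bF_q[T]$ the strategy by which Vaserstein \cite{Vaserstein} settled the case of $\bZ$. Concretely, one writes down a single explicit polynomial map $\cP \colon \A^{52} \to \SL_2(\A)$, assembled from the matrix maps $\cF_n$ and $\cG_h$ of Subsection~\ref{SS-F_h--and--G_h} (the $\cF_n$ being the alternating products of upper- and lower-triangular elementary matrices), and shows that $\cP$ is surjective. One cannot simply imitate Carter--Keller, for the same reason as over $\bZ$: by Nagao's theorem $\SL_2(\bF_q[T])$ is a nontrivial amalgamated product, hence contains nonabelian free subgroups and is \emph{not} boundedly generated by the elementary matrices (compare the discussion of $\SL_2(\bZ)$ above). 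The way around this, as in Vaserstein's argument, is to pass through a localization of $\A$ in which bounded generation does hold, at the cost of carrying an auxiliary irreducible polynomial among the $52$ parameters. (The hypothesis that $q$ is odd should enter through square-completion identities that degenerate in characteristic $2$.)

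The heart of the matter is a reduction procedure for an arbitrary $M = \begin{pmatrix} a & b \\ c & d \end{pmatrix} \in \SL_2(\A)$, carried out in three stages. First, using the Euclidean (degree) division in $\A$ and a bounded sequence of left and right multiplications by elementary matrices $\cE_{i,j}(\lambda)$ whose off-diagonal entry $\lambda$ is a polynomial in a few of the new parameters, one brings $M$ to a prescribed normal form in which the entries that control the remainder of the argument are coprime to a chosen modulus. Second, one devotes a parameter slot to an irreducible polynomial $\pi \in \A$ and invokes the Kornblum--Artin function field analogue of Dirichlet's theorem on primes in arithmetic progressions --- which, crucially, is \emph{unconditional} over function fields, so that no analogue of the Generalized Riemann Hypothesis enters --- to choose $\pi$ in a suitable residue class so that $\A/(\pi)$ is a finite field and all reductions modulo $\pi$ occurring below are invertible. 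The point here is that $\cP$ only has to be a well-defined polynomial map at every point of $\A^{52}$ and to hit $M$ at \emph{one} point: the search for $\pi$ is unbounded, but $\pi$ enters $\cP$ as an input coordinate, not as the output of a polynomial. Third, one uses that over the localization $\A[1/\pi]$ --- whose unit group $\bF_q^\times \times \langle \pi \rangle$ is infinite --- $\SL_2(\A[1/\pi])$ is boundedly generated by elementary matrices with an explicit bound (the function field counterpart of the Carter--Keller--Paige and Liehl results, sharpened into Vaserstein-type formulas), so that $M$ is obtained from a bounded-length parametrized product; the remaining parameter slots then carry the finitely many polynomials --- functions of $M$ and of the chosen $\pi$ --- needed to make that product equal $M$, the map $\cP$ performing only additions and multiplications in $\A$.

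Concatenating the three stages, and using the composition identities for the $\cF_n$ (a product of two $\cF$'s is again a map of the same type, with a predictable number of parameters) together with the explicit shapes of the $\cG_h$ to suppress redundant slots, yields a polynomial map in a bounded number of variables. The precise accounting --- parameters for the reduction to normal form, for the auxiliary irreducible $\pi$ and the congruence data it must meet, for the bounded-generation word over $\A[1/\pi]$, and for eliminating the powers of $\pi$ --- is what fixes the total, and it should come out to $52$. The excess over Vaserstein's $46$ over $\bZ$ reflects the differences between $\bZ$ and $\A$: the unit group $\bF_q^\times$ is larger than $\{\pm 1\}$, leading coefficients have to be tracked, and degree bookkeeping in $\A$ is slightly more expensive.

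I expect the main obstacle to be the interface between the unbounded search for $\pi$ and the fixed, finite-parameter map $\cP$. One has to verify that a single application of the function field Dirichlet theorem, combined with the Chinese Remainder Theorem, can be arranged to meet \emph{all} the coprimality and congruence requirements on $\pi$ simultaneously, and that the exponents of $\pi$ that appear when the bounded-generation word is written out are bounded independently of $M$, so that they can be absorbed into a fixed set of parameters. The Euclidean structure of $\A$ and the clean arithmetic of $\bF_q[T]$ make these points more tractable than over $\bZ$, but pinning the parameter count down to exactly $52$, rather than a considerably larger number, is the genuinely delicate part, and is precisely where the design of the maps $\cF_n$, $\cG_h$ and the economy of the reduction lemmas have to be pushed.
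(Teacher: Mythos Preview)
Your plan has a genuine gap: it misidentifies Vaserstein's mechanism and, as written, cannot produce a polynomial parametrization. You propose to assemble $\cP$ \emph{only} from the maps $\cF_n$ and $\cG_h$ of Subsection~\ref{SS-F_h--and--G_h}, and to feed them with entries coming from a bounded elementary word over the localization $\A[1/\pi]$. But the image of any finite product of $\cF_n$'s and $\cG_h$'s with $\A$-inputs lies in $\cF_N(\A^N)$ for some fixed $N$, i.e.\ in the set of bounded products of elementary matrices over $\A$ itself; since $\SL_2(\A)$ is not boundedly generated by elementary matrices, no such map is surjective. Passing through $\A[1/\pi]$ does not help: the bounded word you obtain there has off-diagonal entries of the form $a/\pi^k$, and there is no polynomial operation over $\A$ that recovers $a/\pi^k$ from the parameters $a$ and $\pi$. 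Your own worry (``that the exponents of $\pi$ \ldots\ can be absorbed into a fixed set of parameters'') is precisely the point that fails, and Vaserstein does \emph{not} in fact argue via localization.

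What the paper (and Vaserstein) actually do is introduce a further $5$-parameter polynomial matrix $\Lambda$ (Vaserstein's $\Phi_5$, see Subsection~\ref{SS-Lambda-M_Lambda}), built by conjugating a $4$-parameter matrix by $\left(\begin{smallmatrix} m_5 & 0 \\ 0 & 1 \end{smallmatrix}\right)$; its image $\cM_\Lambda$ contains products $\beta\beta'$ of matrices that are congruent to $\mathbf{1}_2$ modulo a parameter, and these are \emph{not} bounded products of elementary matrices. The function field Dirichlet theorem is used, but not to pick a modulus for localization: one chooses $u,v$ so that $\wp_1 = au+b$ and $\wp_2 = av+c$ are primes with $\gcd(\deg\wp_1,\deg\wp_2)=1$, and then the $(q-1)$-th power residue symbol forces $a^{e_i}\equiv\epsilon_i\pmod{\wp_i}$ for coprime exponents $e_i=(q^{\deg\wp_i}-1)/(q-1)$. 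Combining this with a Cayley--Hamilton step and two applications of $\Lambda$, $\Lambda^T$ lets one express $\alpha^{e_1}$ and $\alpha^{-e_2}$ in a fixed polynomial family, and hence $\alpha=\alpha^{e_1}\alpha^{-e_2}$. The final map is $\Omega=\cG_9\Lambda^T\cG_4\Lambda\cF_{11}\Lambda\cF_4\Lambda^T\cG_4$; the four copies of $\Lambda,\Lambda^T$ supply $20$ of the $52$ parameters, and it is exactly these $\Lambda$-blocks --- absent from your plan --- that bypass the failure of bounded elementary generation. (The odd-characteristic hypothesis enters where you guessed: writing $q-1=2q_1$ to extract a square in Lemma~\ref{L-1st-lemma-in-S-SL2(A)}.)
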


Despite many strong analogies between $\bZ$ and $\A$ (see Goss \cite{Goss}, Rosen \cite{Rosen}, Thakur \cite{Thakur-book}, or Weil \cite{Weil} for these analogies), $\SL_2(\A)$ does not always bear a resemblance to $\SL_2(\bZ)$. For example, Nagao's theorem (see Nagao \cite{Nagao}, or Bux and Wortman \cite[Section {\bf2}]{Bux-Wortman}) says that $\SL_2(\A)$ is not finitely generated. The group $\SL_2(\bZ)$ is however finitely generated as mentioned before. So it is a nontrivial question as to whether there is an analogue of Vaserstein's theorem for $\A$. Theorem \ref{Thm-the-1st-main-thm-in-the-introduction} answers this questions affirmatively by showing that $\SL_2(\A)$ is a polynomial family with 52 parameters.

Throughout the work of Vaserstein \cite{Vaserstein}, the polynomial parametrization of $\SL_2(\bZ)$ is often used to show many interesting sets in $\bZ^h$ are polynomial families. Using similar arguments as in Vaserstein \cite{Vaserstein}, one can use Theorem \ref{Thm-the-1st-main-thm-in-the-introduction} to show many sets in $\A^h$ are polynomial families. As an illustration, let us now consider some applications of Theorem \ref{Thm-the-1st-main-thm-in-the-introduction}. 

Take any commutative ring $\fR$ with identity $1$. Recall that a $h$-tuple $(m_1, \ldots, m_h) \in \fR^h$ is called \textit{unimodular} if there exist elements $\alpha_1, \ldots, \alpha_h \in \fR$ such that $\sum_{i = 1}^h\alpha_i m_i = 1$. We denote by $\UM_h(\fR)$ the set of all unimodular $h$-tuples in $\fR^h$. 

We say that $\fR$ \textit{satisfies the $h$-th Bass stable range condition} if for any $(h + 1)$-tuple $(m_1, \ldots, m_{h + 1}) \in \UM_{h + 1}(\fR)$, there exist elements $\alpha_1, \ldots, \alpha_h \in \A$ such that the $h$-tuple $(m_1 + \alpha_1 m_{h +1}, \ldots, m_h + \alpha_h m_{h + 1}) \in \UM_h(\fR)$. In notation, we write $\SR(\fR) \le h$.

Now return to our ring $\A$. It is well-known (see \cite[page {\bf14}]{Bass}) that $\A$ satisfies the second Bass stable range condition. Hence it follows from Vaserstein \cite[pages {\bf994} and {\bf995}]{Vaserstein} that $\UM_n(\A)$ is a polynomial family with $2n$ parameters for all $n \ge 3$. (In fact, Vaserstein proved that the last result also holds if $\A$ is replaced by any commutative ring $\fR$ with $\SR(\fR) \le 2$.) 

Now take any pair $(a, b) \in \UM_2(\A)$. Then there exist $c, d \in \A$ such that $ad - bc = 1$. Set 
\begin{align*}
\alpha = \begin{pmatrix} a & b \\ c & d \end{pmatrix} \in \SL_2(\A).
\end{align*}
By Theorem \ref{Thm-the-1st-main-thm-in-the-introduction}, there are polynomials $\cP_1, \cP_2, \cP_3, \cP_4 \in \A[x_1, \ldots, x_{52}]$ in 52 variables such that
\begin{align*}
\SL_2(\A) = \begin{pmatrix} \cP_1(\A^{52}) & \cP_2(\A^{52}) \\ \cP_3(\A^{52}) & \cP_4(\A^{52}) \end{pmatrix}.
\end{align*}

We deduce that
\begin{align*}
(a, b) = (1, 0)\alpha \in (1, 0)\SL_2(\A) = (1, 0) \begin{pmatrix} \cP_1(\A^{52}) & \cP_2(\A^{52}) \\ \cP_3(\A^{52}) & \cP_4(\A^{52}) \end{pmatrix} = ( \cP_1(\A^{52}) , \cP_2(\A^{52})),
\end{align*}
which yields the following result.

\begin{corollary}

$\UM_2(\A)$ is a polynomial family with 52 parameters.

\end{corollary}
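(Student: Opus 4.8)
The plan is to realize $\UM_2(\A)$ as the set of top rows of matrices in $\SL_2(\A)$ and then push forward the $52$-variable polynomial parametrization of $\SL_2(\A)$ supplied by Theorem~\ref{Thm-the-1st-main-thm-in-the-introduction}.

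First I would record the elementary dictionary: a pair $(a,b) \in \A^2$ lies in $\UM_2(\A)$ if and only if it occurs as the first row of some matrix in $\SL_2(\A)$. For one direction, if $(a,b) \in \UM_2(\A)$ then by definition there are $c, d \in \A$ with $ad - bc = 1$, so $\begin{pmatrix} a & b \\ c & d \end{pmatrix} \in \SL_2(\A)$ and has $(a,b)$ as its top row. Conversely, the top row $(a,b)$ of any $\begin{pmatrix} a & b \\ c & d \end{pmatrix} \in \SL_2(\A)$ satisfies $a d + b(-c) = 1$, hence is unimodular.

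Next, let $\cP_1, \ldots, \cP_4 \in \A[x_1, \ldots, x_{52}]$ be the polynomials provided by Theorem~\ref{Thm-the-1st-main-thm-in-the-introduction}, so that $\SL_2(\A)$ is exactly the set of matrices $\begin{pmatrix} \cP_1(\vec x) & \cP_2(\vec x) \\ \cP_3(\vec x) & \cP_4(\vec x) \end{pmatrix}$ as $\vec x$ ranges over $\A^{52}$. Define $\cQ = (\cP_1, \cP_2) : \A^{52} \to \A^2$, a polynomial map over $\A$. Combining the two facts above yields a chain of equivalences: $(a,b) \in \UM_2(\A)$ iff $(a,b)$ is the top row of some element of $\SL_2(\A)$ iff $(a,b) = (\cP_1(\vec x), \cP_2(\vec x))$ for some $\vec x \in \A^{52}$; that is, $\UM_2(\A) = \cQ(\A^{52})$, so $\cQ$ is a polynomial parametrization of $\UM_2(\A)$ in $52$ variables.

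There is no genuine obstacle here — the whole weight of the statement rests on Theorem~\ref{Thm-the-1st-main-thm-in-the-introduction}. The only points requiring any care are the trivial equivalence just described (via the cofactor identity $a\cdot d + b\cdot(-c) = 1$) and the observation that the coordinate polynomials $\cP_1, \cP_2$ genuinely have coefficients in $\A$, so that $\cQ$ is a bona fide polynomial map over $\A$ rather than over some larger ring.
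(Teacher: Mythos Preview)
Your proof is correct and follows essentially the same approach as the paper: extract the top row of the polynomial parametrization of $\SL_2(\A)$ from Theorem~\ref{Thm-the-1st-main-thm-in-the-introduction}, noting that top rows of $\SL_2(\A)$ are exactly the elements of $\UM_2(\A)$. The paper phrases the row-extraction as left multiplication by $(1,0)$, but the content is identical to what you wrote.
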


Following the same arguments as in Vaserstein \cite[page {\bf998}]{Vaserstein} and using Theorem \ref{Thm-the-1st-main-thm-in-the-introduction}, the following result is immediate, and can be proved by induction on $n$.

\begin{corollary}

$\SL_n(\A)$ is a polynomial family with $45 + n(3n + 1)/2$ parameters for any $n \ge 2$.

\end{corollary}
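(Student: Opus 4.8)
The plan is to proceed by induction on $n \ge 2$, using Theorem \ref{Thm-the-1st-main-thm-in-the-introduction} as the base case $n = 2$ (where $45 + 2(7)/2 = 45 + 7 = 52$, matching the 52 parameters) and then pushing the statement from $n$ to $n+1$ via the standard column-reduction argument over a ring of Bass stable range at most $2$. Recall that $\A = \bF_q[T]$ satisfies the second Bass stable range condition, and that, following Vaserstein's arguments, this implies $\UM_{n+1}(\A)$ is a polynomial family with $2(n+1)$ parameters for $n + 1 \ge 3$ — and, more importantly here, it lets us factor an arbitrary matrix in $\SL_{n+1}(\A)$ as a product of a block-diagonal matrix $\mathrm{diag}(g, 1)$ with $g \in \SL_n(\A)$ and a bounded number of elementary matrices whose entries are polynomial functions of $2(n+1)$ new parameters.

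Concretely, I would first fix $g \in \SL_{n+1}(\A)$. Its last column is a unimodular $(n+1)$-tuple, so using $\SR(\A) \le 2$ one can, by left multiplication by elementary matrices, bring $g$ into a form whose last column is $(0, \ldots, 0, *, 1)^{\mathrm{t}}$ and then to $(0, \ldots, 0, 1)^{\mathrm{t}}$; transposing or clearing the last row similarly, one reduces to a matrix of the shape $\begin{pmatrix} g' & 0 \\ 0 & 1 \end{pmatrix}$ with $g' \in \SL_n(\A)$. The key point, exactly as in Vaserstein \cite[page {\bf998}]{Vaserstein}, is that the elementary matrices used in this reduction have entries that are \emph{polynomial} in finitely many parameters: the stable-range manipulation contributes a fixed number of parameters (here $2(n+1) = 2n+2$ accounts for the unimodular-row bookkeeping, but in fact the count $45 + n(3n+1)/2 \to 45 + (n+1)(3n+4)/2$ differs by $(3n+4)(n+1)/2 - n(3n+1)/2 = (3n^2 + 7n + 4 - 3n^2 - n)/2 = (6n+4)/2 = 3n+2$), so precisely $3n+2$ additional parameters are introduced at the $(n+1)$-st stage. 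Inverting the reduction, one writes a general element of $\SL_{n+1}(\A)$ as (product of elementary matrices, polynomial in $3n+2$ parameters) times $\mathrm{diag}(g', 1)$ with $g' \in \SL_n(\A)$; by the inductive hypothesis $g'$ ranges over $\cP(\A^{45 + n(3n+1)/2})$ for a fixed polynomial map $\cP$, and composing gives a polynomial parametrization of $\SL_{n+1}(\A)$ in $45 + n(3n+1)/2 + (3n+2) = 45 + (n+1)(3(n+1)+1)/2$ variables, completing the induction.

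The main obstacle — and the place where one must genuinely invoke Vaserstein's machinery rather than a naive argument — is ensuring that every step of the stable-range column reduction can be realized \emph{uniformly by polynomials}, independent of the particular matrix $g$. The subtlety is that the stable-range condition $\SR(\A) \le 2$ a priori only asserts the \emph{existence} of the $\alpha_i$ making a shorter tuple unimodular; one needs the stronger fact, which Vaserstein establishes, that for $\A$ one may take these $\alpha_i$ (and the subsequent "divide-out" coefficients produced by the unimodularity witnesses) to be fixed polynomial functions of a bounded set of parameters. Once that uniformity is granted — and it transfers verbatim from $\bZ$ to $\A$ because the only ring-theoretic input is $\SR \le 2$ together with the polynomial-parametrization of unimodular rows, both of which hold for $\A$ — the rest is the bookkeeping of parameter counts recorded above, which is routine. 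Hence the corollary follows by induction on $n$, the verification of the arithmetic $45 + n(3n+1)/2$ at each step being a direct computation.
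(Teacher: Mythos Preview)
Your proposal is correct and follows exactly the approach the paper indicates: the paper simply states that the result ``can be proved by induction on $n$'' following Vaserstein \cite[page {\bf998}]{Vaserstein}, and your sketch fills in precisely that induction with the correct parameter arithmetic ($3n+2$ new parameters at each step).

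One small clarification on your final paragraph: the ``uniformity'' worry is slightly misdescribed. The stable-range witnesses $\alpha_i$ are not required to be polynomial functions of some auxiliary parameters; rather, the $\alpha_i$ themselves (together with the entries used to clear the last row) \emph{are} the new free parameters. The parametrizing map sends $(\text{new parameters},\ \text{old parameters})$ to a product of elementary matrices (entries $=$ new parameters) times $\mathrm{diag}(\cP(\text{old parameters}),1)$ times further elementary matrices, and the bare existence of such $\alpha_i$ from $\SR(\A)\le 2$ is exactly what guarantees surjectivity of this polynomial map. With that understood, there is no genuine obstacle and your argument goes through as written.
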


The proof of Theorem \ref{Thm-the-1st-main-thm-in-the-introduction} in Section \ref{S-SL2(A)-polynomial-family} shows that there exists a surjective matrix map from $\A^{52}$ to $\SL_2(\A)$ with \textit{rational integral coefficients}. Hence Theorem \ref{Thm-the-1st-main-thm-in-the-introduction} also implies that $\SL_2(\bar{\bF}_q[T])$ is a polynomial family, where $\bar{\bF}_q$ is the algebraic closure of $\bF_q$. It is natural to ask the following question.

\begin{question}

Let $\mathfrak{F} : \bA^{52} \rightarrow \SL_2$ be the morphism defined over $\bZ$ that is constructed in the proof of Theorem \ref{Thm-the-1st-main-thm-in-the-introduction} in Section \ref{S-SL2(A)-polynomial-family}. Does there exist a ring $\fR$ such that the matrix map $\mathfrak{F}(\fR) : \bA^{52}(\fR) \rightarrow \SL_2(\fR)$ is not surjective?

\end{question}

One can ask the same question with $\mathfrak{F}$ replaced by the morphism $\mathfrak{V} : \bA^{48} \rightarrow \SL_2$ from the work of Vaserstein \cite{Vaserstein}. In fact we do not even know whether or not the matrix map $\mathfrak{V}(\A) : \bA^{48}(\A) \rightarrow \SL_2(\A)$ arising from the Vaserstein morphism is surjective.

\subsection{Main ideas of the proof of Theorem \ref{Thm-the-1st-main-thm-in-the-introduction}}

In this subsection, we explain the main ideas of the proof of Theorem \ref{Thm-the-1st-main-thm-in-the-introduction}. Our approach is based on that of Vaserstein in \cite{Vaserstein}, but we need to get round to technical difficulties arising from the function field setting. 

A simple but important idea is to prove that for a given matrix $\alpha \in \SL_2(\A)$, there exist matrices, say $L_{\alpha}, R_{\alpha} \in \SL_2(\A)$ that can be polynomially parameterized such that $L_{\alpha}\alpha R_{\alpha}$ belongs to a subset of $\SL_2(\A)$ that can be easily proved to belong to a polynomial family. Note that a product of matrices in $\SL_2(A)$, each of which comes from a polynomial family, also belongs to a polynomial family. With this remark, one can choose $L_{\alpha}, R_{\alpha}$ in such a way that each of them is a product of a fixed number of matrices, each of which belongs to a polynomial family. 

Let $\S_{PF}$ be the subset of $\SL_2(\A)$ consisting of all matrices $\begin{pmatrix} a & b \\ c & * \end{pmatrix} \in \SL_2(\A)$ that satisfy the following conditions:
\begin{itemize}

 \item [(i)] $a^{e_1} \equiv \epsilon_1 \pmod{b}$ for some unit $\epsilon_1 \in \bF_q^{\times}$ and some $e_1 \in \bZ_{>0}$;

\item [(ii)] $a^{e_2} \equiv \epsilon_2 \pmod{c}$ for some unit $\epsilon_2 \in \bF_q^{\times}$ and some $e_2 \in \bZ_{>0}$; and

\item [(iii)] $\gcd(e_1, e_2) = 1$.

\end{itemize}
 As shown in Corollary \ref{C---2nd----C----about---representation-of-alpha---in---S--SL2(A)}, the set $\S_{PF}$ belongs to a polynomial family, and in fact, the proof of this fact is the most difficult part in the proof of Theorem \ref{Thm-the-1st-main-thm-in-the-introduction}.

The structure of the polynomial ring $\A$ plays an important role when one wants to prove that for a given $\alpha \in \SL_2(\A)$, there exist elements $u, v \in \A$ such that
\begin{align*}
\begin{pmatrix} 1 & 0 \\ v & 1\end{pmatrix} \alpha \begin{pmatrix} 1 & u \\ 0 & 1\end{pmatrix} \in \S_{PF}.
\end{align*}
In order to show this, we use the $(q - 1)$-th power residue symbol, and a strong function field analogue of Dirichlet's theorem on primes in arithmetic progressions that is not available in the number field context. This function field analogue of the Dirichlet theorem assures that one can choose $u, v \in \A$ such that both $\wp_1 = au + b$ and $\wp_2 = av + c$ are primes in $\A$, and $\deg(\wp_1), \deg(\wp_2)$ are relative prime. The former is necessary for us to use the $(q - 1)$-th power residue symbol, and the latter is crucial to transform $\alpha$ into an element in $\S_{PF}$. This forms a main part of Lemma \ref{L-representation-of-alpha--in---SL2(A)}.

It remains to show that $\S_{PF}$ belongs to a polynomial family. For this purpose, the main difficulty is to show that for a given matrix $\alpha = \begin{pmatrix} a & b \\ c &  * \end{pmatrix} \in \S_{PF}$, there exists a matrix $\beta_{PF}$ that belongs to a polynomial family such that $\alpha^{r_1} = \beta_{PF}$ for some positive integer $r_1$. Condition (i) in the definition of $\S_{PF}$ plays a central role in proving this fact. If this can be done for $\alpha$, taking the transpose of $\alpha$ and using condition (ii) in the definition of $\S_{PF}$, one can also show that there exists a matrix $\gamma_{PF}$ that belongs to a polynomial family such that $(\alpha^T)^{r_2} = \gamma_{PF}$ for some positive integer $r_2$. Hence conjugating both sides of the last equation by $\begin{pmatrix} 0 & 1 \\ - 1 & 0 \end{pmatrix}$, one gets $\alpha^{-r_2} = \gamma^{\star}_{PF}$ for another matrix $\gamma^{\star}_{PF}$ that also belongs to a polynomial family. Using condition (iii), one can further choose $r_1, r_2 \in \bZ_{>0}$ such that $r_1 - r_2 = 1$, and hence 
\begin{align*}
\alpha = \alpha^{r_1}\alpha^{-r_2} = \beta_{PF}\gamma^{\star}_{PF},
\end{align*}
which proves that $\S_{PF}$ belongs to a polynomial family. This will be proved in detail in Corollary \ref{C---2nd----C----about---representation-of-alpha---in---S--SL2(A)}.

In order to prove that condition (i) in the definition of $\S_{PF}$ implies that for each matrix $\alpha$ in $\S_{PF}$, some power of $\alpha$ belongs to a polynomial family, the key step is to show that for each $\alpha =  \begin{pmatrix} a & b \\ c & d \end{pmatrix} \in \SL_2(\A)$ and each positive integer $r$, there exists a matrix $\gamma_{PF}$ that belongs to a polynomial family such that
\begin{align}
\label{Eqn1-in-main-ideas-of-the-proof}
\alpha^r = \begin{pmatrix} a^r & \epsilon b  \\ * & * \end{pmatrix} \gamma_{PF}
\end{align}
for some unit $\epsilon \in \bF_q^{\times}$. The first part of Lemma \ref{L-2nd-L-in-S-SL2(A)} shows that a simple use of the Caley-Hamilton theorem implies
\begin{align*}
\alpha^r = \begin{pmatrix} au + v & ub \\ * & * \end{pmatrix}
\end{align*}
for some $u, v \in \A$. One way to get (\ref{Eqn1-in-main-ideas-of-the-proof}) from the last equation is to transform the matrix $\begin{pmatrix} a + ub & ub \\ \star & \star \end{pmatrix}$ into the matrix $\begin{pmatrix} au + v & \epsilon b \\  * & * \end{pmatrix}$ for some unit $\epsilon \in \bF_q^{\times}$ by multiplying the former matrix by an appropriate matrix $\beta_{PF}$ that belongs to a polynomial family. This is proved in Lemma  \ref{L-1st-lemma-in-S-SL2(A)} whose proof also uses a strong function field analogue of the Dirichlet theorem and the $(q - 1)$-th power residue symbol. Note that for each $\alpha \in \SL_2(\A)$ and each positive integer $r$, the matrix $\gamma_{PF}$ in (\ref{Eqn1-in-main-ideas-of-the-proof}) is constructed as a product of a fixed number of matrices, each of which belongs to either the set of elementary matrices, the polynomial family $\cM_{\Lambda}$, or the polynomial family $\cM_{\Lambda}^{T}$. The last two polynomial families will be introduced in Subsection \ref{SS-Lambda-M_Lambda}.

The structure of this paper is as follows. In Section \ref{S-basic-notions}, we introduce some basic notation and necessary tools that will be used to prove Theorem \ref{Thm-the-1st-main-thm-in-the-introduction}. We will prove Theorem \ref{Thm-the-1st-main-thm-in-the-introduction} in Section \ref{S-SL2(A)-polynomial-family}.

\section{Some basic notation and notions}
\label{S-basic-notions}

In this section, we introduce some basic notation and notions that will be used throughout this paper. Vaserstein \cite{Vaserstein} used the polynomial matrices $\Phi_5, \Delta_i, \Gamma_i$ (see \cite[pages {\bf990}, {\bf992}]{Vaserstein} for their definitions) to construct the polynomial matrix in 46 variables that is a polynomial parametrization of $\SL_2(\bZ)$. We use the same set of polynomial matrices with different notation to obtain a polynomial parametrization of $\SL_2(\A)$; more explicitly, $\Lambda, \cF_i, \cG_i$ in this paper stand for $\Phi_5, \Delta_i, \Gamma_i$ in Vaserstein \cite{Vaserstein}, respectively. 

Note that the main aim of this section is to fix notation and notions for the next section. Hence the reader may wish to skip it on the first reading, and return to it later.

\subsection{Definitions of $\cF_h$, $\cG_h$}
\label{SS-F_h--and--G_h}

For each $m \in \A$, set $m_{\{1, 2\}} = \begin{pmatrix} 1 & m \\ 0 & 1 \end{pmatrix}$, and let $m_{\{2, 1\}} = \begin{pmatrix} 1 & 0 \\ m & 1 \end{pmatrix}$. Both $m_{\{1, 2\}}$ and $m_{\{2, 1\}}$ of course are in $\SL_2(\A)$. 

Although the following result is elementary, it is useful in many places of this paper. 

\begin{lemma}
\label{L-eqn-of-alpha-after-conjugating-by-0--1--minus1---0}

Let $\alpha \in \SL_2(\A)$. Then
\begin{align*}
(\alpha^{-1})^T = \begin{pmatrix} 0 & 1 \\ - 1 & 0 \end{pmatrix}^{-1} \alpha \begin{pmatrix} 0 & 1 \\ - 1 & 0 \end{pmatrix} =  \begin{pmatrix} 0 & 1 \\ - 1 & 0 \end{pmatrix} \alpha \begin{pmatrix} 0 & 1 \\ - 1 & 0 \end{pmatrix}^{-1}.
\end{align*}

\end{lemma}

For each $h \ge 1$, we denote by $\cF_{2h}(m^{(1)}, \ldots, m^{(2h)}) \in \SL_2(\A[m^{(1)}, \ldots, m^{(2h)}])$ the polynomial matrix in $2h$ parameters defined by
\begin{align*}
\cF_{2h}(m^{(1)}, \ldots, m^{(2h)}) &= m^{(1)}_{\{1, 2\}}m^{(2)}_{\{2, 1\}} \cdots m^{(2h - 1)}_{\{1, 2\}} m^{(2h)}_{\{2, 1\}} \\
&= \begin{pmatrix} 1 & m^{(1)} \\ 0 & 1 \end{pmatrix}\begin{pmatrix} 1 & 0 \\ m^{(2)} & 1 \end{pmatrix}\cdots \begin{pmatrix} 1 & m^{(2h - 1)} \\ 0 & 1 \end{pmatrix}\begin{pmatrix} 1 & 0 \\ m^{(2h)} & 1 \end{pmatrix}.
\end{align*}

For each $h \ge 0$, we denote by $\cF_{2h + 1}(m^{(1)}, \ldots, m^{(2h + 1)}) \in \SL_2(\A[m^{(1)}, \ldots, m^{(2h + 1)}])$ the polynomial matrix in $2h + 1$ parameters defined by
\begin{align*}
\cF_{2h + 1}(m^{(1)}, \ldots, m^{(2h + 1)}) &= m^{(1)}_{\{1, 2\}}m^{(2)}_{\{2, 1\}} \cdots m^{(2h)}_{\{2, 1\}} m^{(2h  + 1)}_{\{1, 2\}} \\
&= \begin{pmatrix} 1 & m^{(1)} \\ 0 & 1 \end{pmatrix}\begin{pmatrix} 1 & 0 \\ m^{(2)} & 1 \end{pmatrix}\cdots \begin{pmatrix} 1 & 0 \\ m^{(2h)} & 1 \end{pmatrix}\begin{pmatrix} 1 & m^{(2h +1)} \\ 0 & 1 \end{pmatrix}.
\end{align*}

Note that since $\SL_2(\A)$ is not boundedly generated by the elementary matrices, none of the $\cF_h$ is surjective over $\A$.

For each integer $r \ge 1$, set
\begin{align}
\label{E-eqn-of-G_r-----in-----terms-------of-----F_r---in-S-basic-notions}
\cG_r(m^{(1)}, \ldots, m^{(r)}) = \begin{pmatrix} 0 & 1 \\ - 1 & 0 \end{pmatrix} \cF_r(m^{(1)}, \ldots, m^{(r)})\begin{pmatrix} 0 & 1 \\ - 1 & 0 \end{pmatrix}^{-1}.
\end{align}
Equivalently, one can write
\begin{align}
\label{E-eqn----of-F_r-----in-----terms-------of-----G_r---in-S-basic-notions}
\cF_r(m^{(1)}, \ldots, m^{(r)}) = \begin{pmatrix} 0 & 1 \\ - 1 & 0 \end{pmatrix} \cG_r(m^{(1)}, \ldots, m^{(r)}) \begin{pmatrix} 0 & 1 \\ - 1 & 0 \end{pmatrix}^{-1}.
\end{align}

The next result follows immediately from Lemma \ref{L-eqn-of-alpha-after-conjugating-by-0--1--minus1---0}.

\begin{lemma}
\label{L-explicit-eqn-of-G_r----in----S---basic-notions}

\begin{itemize}

\item []

\item [(i)] For each integer $h \ge 1$,
\begin{align*}
\cG_{2h}(m^{(1)}, \ldots, m^{(2h)}) = (-m^{(1)})_{\{2, 1\}}(-m^{(2)})_{\{1, 2\}} \cdots (-m^{(2h - 1)})_{\{2, 1\}} (-m^{(2h)})_{\{1, 2\}}.
\end{align*}

\item [(ii)] For each integer $h \ge 0$,

\begin{align*}
\cG_{2h +1}(m^{(1)}, \ldots, m^{(2h + 1)}) = (-m^{(1)})_{\{2, 1\}}(-m^{(2)})_{\{1, 2\}} \cdots (-m^{(2h)})_{\{1, 2\}} (-m^{(2h  + 1)})_{\{2, 1\}}.
\end{align*}

\end{itemize}

\end{lemma}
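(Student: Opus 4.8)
The plan is to reduce both parts to a single observation: conjugation by $w := \begin{pmatrix} 0 & 1 \\ -1 & 0 \end{pmatrix}$ interchanges the two families of unipotent generators up to sign. Concretely, applying Lemma \ref{L-eqn-of-alpha-after-conjugating-by-0--1--minus1---0} with $\alpha = m_{\{1, 2\}}$ gives
\begin{align*}
w\, m_{\{1, 2\}}\, w^{-1} = \bigl((m_{\{1, 2\}})^{-1}\bigr)^T = \bigl((-m)_{\{1, 2\}}\bigr)^T = (-m)_{\{2, 1\}},
\end{align*}
and with $\alpha = m_{\{2, 1\}}$ it gives $w\, m_{\{2, 1\}}\, w^{-1} = (-m)_{\{1, 2\}}$; here one uses the elementary facts $(m_{\{1, 2\}})^{-1} = (-m)_{\{1, 2\}}$, $(m_{\{2, 1\}})^{-1} = (-m)_{\{2, 1\}}$, $(m_{\{1, 2\}})^T = m_{\{2, 1\}}$, $(m_{\{2, 1\}})^T = m_{\{1, 2\}}$, all visible from the explicit $2\times 2$ matrices.

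Next I would substitute the factorizations of $\cF_{2h}$ and $\cF_{2h + 1}$ from Subsection \ref{SS-F_h--and--G_h} into the defining relation (\ref{E-eqn-of-G_r-----in-----terms-------of-----F_r---in-S-basic-notions}), namely $\cG_r = w\, \cF_r\, w^{-1}$, and insert $w^{-1} w$ between each pair of consecutive factors. Since conjugation by $w$ is a group automorphism of $\SL_2$, this turns the conjugate of a product into the product of conjugates, e.g.
\begin{align*}
\cG_{2h}(m^{(1)}, \ldots, m^{(2h)}) = \bigl(w\, m^{(1)}_{\{1, 2\}}\, w^{-1}\bigr)\bigl(w\, m^{(2)}_{\{2, 1\}}\, w^{-1}\bigr)\cdots \bigl(w\, m^{(2h - 1)}_{\{1, 2\}}\, w^{-1}\bigr)\bigl(w\, m^{(2h)}_{\{2, 1\}}\, w^{-1}\bigr),
\end{align*}
and similarly for $\cG_{2h + 1}$. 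Applying the two conjugation formulas from the previous paragraph — $w\, m^{(i)}_{\{1, 2\}}\, w^{-1} = (-m^{(i)})_{\{2, 1\}}$ on the odd-indexed factors and $w\, m^{(i)}_{\{2, 1\}}\, w^{-1} = (-m^{(i)})_{\{1, 2\}}$ on the even-indexed factors — yields exactly the expressions claimed in (i) and (ii). For (ii) one simply notes that the final factor of $\cF_{2h + 1}$ is of type $\{1, 2\}$, hence becomes the type-$\{2, 1\}$ factor $(-m^{(2h + 1)})_{\{2, 1\}}$, matching the displayed formula; the degenerate case $h = 0$ is the identity $\cG_1(m^{(1)}) = w\, m^{(1)}_{\{1, 2\}}\, w^{-1} = (-m^{(1)})_{\{2, 1\}}$.

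There is no genuinely hard step here; the only thing to watch is the index bookkeeping — tracking which positions carry $\{1, 2\}$-type and which carry $\{2, 1\}$-type factors as one alternates along the product, and handling the parity of the last index correctly in both the even case $r = 2h$ and the odd case $r = 2h + 1$. An alternative packaging is to write $\cG_r = (\cF_r^{-1})^T$ (again from Lemma \ref{L-eqn-of-alpha-after-conjugating-by-0--1--minus1---0}) and expand the right-hand side using that inverting and transposing each reverse the order of a product; but the factor-by-factor conjugation argument above avoids the double order-reversal and seems cleanest.
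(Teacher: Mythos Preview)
Your proof is correct and is precisely the intended expansion of the paper's one-line justification ``follows immediately from Lemma \ref{L-eqn-of-alpha-after-conjugating-by-0--1--minus1---0}'': you identify the key consequence $w\, m_{\{1,2\}}\, w^{-1} = (-m)_{\{2,1\}}$ and $w\, m_{\{2,1\}}\, w^{-1} = (-m)_{\{1,2\}}$ of that lemma and apply it factor by factor. There is nothing to add.
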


For each positive integer $r$, set
\begin{align*}
\cG_r(\A^r) = \left\{\cG_r(a_1, a_2, \ldots, a_r) \; | \; (a_1, \ldots, a_r) \in \A^r \right\}.
\end{align*}

The next two lemmas are obvious.

\begin{lemma}
\label{L-relations-between-------F_r------in---------S-basic-notions}

\begin{itemize}

\item []

\item [(i)] $\cF_i(\A^i) \subset \cF_j(\A^j)$ for any $1 \le i < j$.

\item [(ii)] $\cF_{2h}(\A^{2h})\cF_r(\A^r) \subset \cF_{2h + r}(\A^{2h + r})$ for each integer $h \ge 1$ and each integer $r \ge 1$.

\item [(iii)] $\cF_{2h + 1}(\A^{2h + 1})\cF_r(\A^r) \subset \cF_{2h + r}(\A^{2h + r})$ for each integer $h \ge 0$ and each integer $r \ge 1$.

\end{itemize}

\end{lemma}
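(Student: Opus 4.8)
The statement to prove is Lemma~\ref{L-relations-between-------F_r------in---------S-basic-notions}, which asserts three containments among the images $\cF_r(\A^r)$: monotonicity in $r$, and two ``multiplicativity'' inclusions for even- and odd-indexed $\cF$'s. The key mechanism throughout is that padding a word in the elementary generators $m_{\{1,2\}}$, $m_{\{2,1\}}$ with extra factors equal to the identity (i.e.\ setting some parameter to $0$) does not change the matrix, but does change which $\cF_r(\A^r)$ it is nominally being regarded as an element of.

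For part (i), I would argue as follows. Given $1 \le i < j$, take any $\cF_i(a_1,\ldots,a_i) \in \cF_i(\A^i)$. If $i$ and $j$ have the same parity, then $\cF_j$ ends in the same type of factor ($m_{\{1,2\}}$ if both odd, $m_{\{2,1\}}$ if both even), and one can write $\cF_i(a_1,\ldots,a_i) = \cF_j(a_1,\ldots,a_i,0,\ldots,0)$ since each inserted factor $0_{\{1,2\}}$ or $0_{\{2,1\}}$ is the identity matrix; here one must check that inserting $j-i$ zeros keeps the alternating pattern consistent with the definition of $\cF_j$, which it does because $j-i$ is even. If $i$ and $j$ have different parity, then $j - i$ is odd; inserting $j-i$ zero factors still gives the identity, and again the alternating pattern of $\cF_j(a_1,\ldots,a_i,0,\ldots,0)$ matches its definition (the last inserted factor has the type dictated by the index $j$, and since all inserted factors are $0$ this is harmless). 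Either way $\cF_i(a_1,\ldots,a_i) = \cF_j(a_1,\ldots,a_i,0,\ldots,0) \in \cF_j(\A^j)$. Actually the cleanest uniform phrasing: $\cF_{r}(m^{(1)},\ldots,m^{(r)})$ with $m^{(r)}=0$ equals $\cF_{r-1}(m^{(1)},\ldots,m^{(r-1)})$ by direct inspection of the definitions (the last factor is $0_{\{1,2\}}=I$ or $0_{\{2,1\}}=I$), so $\cF_{r-1}(\A^{r-1}) \subseteq \cF_r(\A^r)$, and (i) follows by iterating this one-step inclusion from $i$ up to $j$.

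For parts (ii) and (iii), the point is that concatenation of the defining words behaves well precisely when the first word has even length, so that gluing it in front of the second word preserves the strict alternation $\{1,2\},\{2,1\},\{1,2\},\ldots$. Explicitly, $\cF_{2h}(m^{(1)},\ldots,m^{(2h)})$ is a word of even length ending in $m^{(2h)}_{\{2,1\}}$, and $\cF_r(n^{(1)},\ldots,n^{(r)})$ is a word starting with $n^{(1)}_{\{1,2\}}$; their product is literally the word $m^{(1)}_{\{1,2\}}\cdots m^{(2h)}_{\{2,1\}}\,n^{(1)}_{\{1,2\}}\cdots$ of length $2h+r$ with the correct alternating pattern, hence equals $\cF_{2h+r}(m^{(1)},\ldots,m^{(2h)},n^{(1)},\ldots,n^{(r)}) \in \cF_{2h+r}(\A^{2h+r})$, giving (ii). For (iii), $\cF_{2h+1}(m^{(1)},\ldots,m^{(2h+1)})$ ends in $m^{(2h+1)}_{\{1,2\}}$, i.e.\ a factor of type $\{1,2\}$, which would collide with the $\{1,2\}$-type leading factor of $\cF_r$; so one inserts one identity factor $0_{\{2,1\}}$ in between. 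That is, $\cF_{2h+1}(m^{(1)},\ldots,m^{(2h+1)})\cF_r(n^{(1)},\ldots,n^{(r)}) = \cF_{2h+1}(m^{(1)},\ldots,m^{(2h+1)})\cdot 0_{\{2,1\}}\cdot\cF_r(n^{(1)},\ldots,n^{(r)}) = \cF_{2h+1+1+r}(m^{(1)},\ldots,m^{(2h+1)},0,n^{(1)},\ldots,n^{(r)})$, which lies in $\cF_{2h+2+r}(\A^{2h+2+r}) \subseteq \cF_{2h+r}(\A^{2h+r})$ by part (i)---wait, that inclusion goes the wrong way. The correct route: use $\cF_{2h+1}(m)_{\phantom{}}\cdot\cF_r(n) = \cF_{2h+2+r}(m,0,n) \in \cF_{2h+2+r}(\A^{2h+2+r})$, and since the claimed target in (iii) is $\cF_{2h+r}(\A^{2h+r})$ I should instead write the product directly: $\cF_{2h+1}(m^{(1)},\ldots,m^{(2h+1)})$ has last factor $m^{(2h+1)}_{\{1,2\}}$, and $\cF_r(n^{(1)},\ldots,n^{(r)})$ has first factor $n^{(1)}_{\{1,2\}}$; multiplying two $\{1,2\}$-type factors gives $(m^{(2h+1)}+n^{(1)})_{\{1,2\}}$, a single $\{1,2\}$-type factor, so the product is the alternating word $m^{(1)}_{\{1,2\}}\cdots m^{(2h)}_{\{2,1\}}(m^{(2h+1)}+n^{(1)})_{\{1,2\}}n^{(2)}_{\{2,1\}}\cdots$ of length $2h+1+r-1 = 2h+r$, hence equals $\cF_{2h+r}(m^{(1)},\ldots,m^{(2h)},m^{(2h+1)}+n^{(1)},n^{(2)},\ldots,n^{(r)})$, which is in $\cF_{2h+r}(\A^{2h+r})$. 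This also reproves (ii) the same way without any parity juggling of the inserted zero, but (ii) is simplest as stated by plain concatenation.

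None of this is genuinely hard---the lemma is flagged as ``obvious'' in the text---so there is no real obstacle; the only thing to be careful about is bookkeeping of the alternating $\{1,2\}/\{2,1\}$ pattern and of word lengths/parities, and the observation that $0_{\{1,2\}} = 0_{\{2,1\}} = I$ together with $a_{\{1,2\}}b_{\{1,2\}} = (a+b)_{\{1,2\}}$ and $a_{\{2,1\}}b_{\{2,1\}} = (a+b)_{\{2,1\}}$. So the plan is: first establish the one-step fact $\cF_{r-1}(\A^{r-1}) \subseteq \cF_r(\A^r)$ by appending a zero, iterate it for (i); then prove (ii) by direct concatenation of the defining words (legal since $2h$ is even so the alternation is maintained); then prove (iii) by merging the adjacent same-type end/start factors into a single factor via $a_{\{1,2\}}b_{\{1,2\}}=(a+b)_{\{1,2\}}$, yielding a word of length exactly $2h+r$.
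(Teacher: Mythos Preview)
Your argument is correct and is exactly the routine verification the paper has in mind; the paper itself gives no proof, stating only that the lemma is obvious. Your final route for (iii)---merging the two adjacent $\{1,2\}$-type factors via $a_{\{1,2\}}b_{\{1,2\}}=(a+b)_{\{1,2\}}$ to get a word of length exactly $2h+r$---is the right one (your first attempt via inserting $0_{\{2,1\}}$ landed in $\cF_{2h+r+2}$, which as you noticed is too large, not too small).
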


\begin{lemma}
\label{L-relations----between-G_r------in---------S-basic-notions}

\begin{itemize}

\item []

\item [(i)] $\cG_i(\A^i) \subset \cG_j(\A^j)$ for any $1 \le i < j$.

\item [(ii)] $\cG_{2h}(\A^{2h})\cG_r(\A^r) \subset \cG_{2h + r}(\A^{2h + r})$ for each integer $h \ge 1$ and each integer $r \ge 1$.

\item [(iii)] $\cG_{2h + 1}(\A^{2h + 1})\cG_r(\A^r) \subset \cG_{2h + r}(\A^{2h + r})$ for each integer $h \ge 0$ and each integer $r \ge 1$.

\end{itemize}

\end{lemma}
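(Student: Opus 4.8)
The plan is to deduce all three statements directly from the corresponding assertions for the $\cF_r$ in Lemma \ref{L-relations-between-------F_r------in---------S-basic-notions}, using the conjugation identity (\ref{E-eqn-of-G_r-----in-----terms-------of-----F_r---in-S-basic-notions}) that expresses $\cG_r$ as $\cF_r$ conjugated by the fixed matrix $w = \begin{pmatrix} 0 & 1 \\ -1 & 0 \end{pmatrix}$. Indeed, conjugation by $w$ is a group automorphism of $\SL_2(\A)$, so it carries products to products and inclusions to inclusions. For part (i): since $\cF_i(\A^i) \subset \cF_j(\A^j)$ for $1 \le i < j$ by Lemma \ref{L-relations-between-------F_r------in---------S-basic-notions}(i), applying $\alpha \mapsto w\alpha w^{-1}$ to both sides and using (\ref{E-eqn-of-G_r-----in-----terms-------of-----F_r---in-S-basic-notions}) gives $\cG_i(\A^i) = w\,\cF_i(\A^i)\,w^{-1} \subset w\,\cF_j(\A^j)\,w^{-1} = \cG_j(\A^j)$.

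For parts (ii) and (iii), I would argue identically. Fix $h$ and $r$ in the allowed ranges. By Lemma \ref{L-relations-between-------F_r------in---------S-basic-notions}(ii) (respectively (iii)) we have $\cF_{2h}(\A^{2h})\cF_r(\A^r) \subset \cF_{2h+r}(\A^{2h+r})$ (resp.\ $\cF_{2h+1}(\A^{2h+1})\cF_r(\A^r) \subset \cF_{2h+r}(\A^{2h+r})$). Now observe that for any subsets $X, Y \subset \SL_2(\A)$ one has $w X w^{-1} \cdot w Y w^{-1} = w (X Y) w^{-1}$, since the inner factors $w^{-1}w$ cancel. Hence
\begin{align*}
\cG_{2h}(\A^{2h})\,\cG_r(\A^r) = w\,\cF_{2h}(\A^{2h})\,w^{-1}\cdot w\,\cF_r(\A^r)\,w^{-1} = w\bigl(\cF_{2h}(\A^{2h})\cF_r(\A^r)\bigr)w^{-1} \subset w\,\cF_{2h+r}(\A^{2h+r})\,w^{-1} = \cG_{2h+r}(\A^{2h+r}),
\end{align*}
which is (ii); the odd case (iii) is obtained by replacing $2h$ with $2h+1$ throughout.

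Alternatively, one can give a purely computational proof using the explicit formulas in Lemma \ref{L-explicit-eqn-of-G_r----in----S---basic-notions}: the $\cG_r$ are words in the elementary matrices $(-m)_{\{1,2\}}$ and $(-m)_{\{2,1\}}$ of exactly the same shape as the $\cF_r$ but with the roles of the two types of elementary matrices swapped, so (i)--(iii) follow by concatenating such words and reindexing the parameters — exactly as in the proof of Lemma \ref{L-relations-between-------F_r------in---------S-basic-notions}. There is no real obstacle here; the only point requiring the slightest care is the parity bookkeeping in (ii) and (iii) (a block of even length followed by a block of length $r$ glues correctly, whereas two odd blocks would not), but this is already handled by the corresponding statement for the $\cF_r$, so the conjugation argument inherits it for free. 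Since the lemma is labeled \emph{obvious}, I would present the short conjugation argument above and omit further detail.
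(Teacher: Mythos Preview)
Your proposal is correct. The paper does not give a proof at all --- it simply declares Lemmas \ref{L-relations-between-------F_r------in---------S-basic-notions} and \ref{L-relations----between-G_r------in---------S-basic-notions} to be ``obvious'' --- and your conjugation argument via (\ref{E-eqn-of-G_r-----in-----terms-------of-----F_r---in-S-basic-notions}) is precisely the natural way to make this obviousness explicit, so there is nothing to compare.
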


The matrices $\begin{pmatrix} \epsilon & 0 \\ 0 & \epsilon^{-1} \end{pmatrix}$ and $\begin{pmatrix} 0 & -\epsilon \\ \epsilon^{-1} & 0 \end{pmatrix}$ for any $\epsilon \in \bF_q^{\times}$ appear naturally in the proof of our main theorem. The next result shows that these matrices are contained in $\cG_4(\A^4) \cap \cF_4(\A^4)$ and $\cG_3(\A^3) \cap \cF_3(\A^3)$, respectively.

\begin{lemma}
\label{L--epsilon-matrix-in-G4(A^4)--cap--F4(A^4)----in---S----basic-notions}

Let $\epsilon \in \bF_q^{\times}$ be a unit in $\A$. Then
\begin{itemize}

\item [(i)]
\begin{align*}
\begin{pmatrix} \epsilon & 0 \\ 0 & \epsilon^{-1} \end{pmatrix} \in \cG_4(\A^4) \cap \cF_4(\A^4).
\end{align*}

\item [(ii)] 
\begin{align*}
\begin{pmatrix} 0 & -\epsilon \\ \epsilon^{-1} & 0 \end{pmatrix} \in \cG_3(\A^3) \cap \cF_3(\A^3).
\end{align*}

\end{itemize}

\end{lemma}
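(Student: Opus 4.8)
The plan is to exhibit explicit products of the elementary matrices $m_{\{1,2\}}$ and $m_{\{2,1\}}$ that realize the two diagonal/anti-diagonal matrices, and then check that the sequence of superscripts $\{1,2\}, \{2,1\}, \ldots$ matches the pattern defining $\cF_3, \cF_4$; the $\cG$-membership will follow formally from the $\cF$-membership via Lemma \ref{L-explicit-eqn-of-G_r----in----S---basic-notions} together with a sign bookkeeping. For part (ii), I would first recall the classical identity
\begin{align*}
\begin{pmatrix} 1 & x \\ 0 & 1 \end{pmatrix}\begin{pmatrix} 1 & 0 \\ y & 1 \end{pmatrix}\begin{pmatrix} 1 & z \\ 0 & 1 \end{pmatrix} = \begin{pmatrix} 1 + xy & x + z + xyz \\ y & 1 + yz \end{pmatrix},
\end{align*}
and look for $x, y, z \in \A$ (in fact in $\bF_q$, so that we land in $\A$) with $1 + xy = 0$, $y = \epsilon^{-1}$, $1 + yz = 0$ and $x + z + xyz = -\epsilon$. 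From $y = \epsilon^{-1}$ one forces $x = -\epsilon$ and $z = -\epsilon$; substituting gives $x + z + xyz = -\epsilon - \epsilon + (-\epsilon)(\epsilon^{-1})(-\epsilon) = -2\epsilon + \epsilon = -\epsilon$, as required. Hence $\cF_3(-\epsilon, \epsilon^{-1}, -\epsilon)$ equals $\begin{pmatrix} 0 & -\epsilon \\ \epsilon^{-1} & 0 \end{pmatrix}$, which is precisely a value of $\cF_3$, so the matrix lies in $\cF_3(\A^3)$. (Note here that $q$ is odd, so $2 \ne 0$ in $\bF_q$ is not actually needed — the computation works in any characteristic — but the ambient hypothesis is harmless.)

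For the $\cG_3$ half of (ii), I would apply Lemma \ref{L-explicit-eqn-of-G_r----in----S---basic-notions}(ii) with $h = 1$: we have
\begin{align*}
\cG_3(m^{(1)}, m^{(2)}, m^{(3)}) = (-m^{(1)})_{\{2,1\}}(-m^{(2)})_{\{1,2\}}(-m^{(3)})_{\{2,1\}},
\end{align*}
so by symmetry of the computation above (now with the roles of the two elementary types swapped) there is a choice of parameters giving the same anti-diagonal matrix; concretely one can instead just observe that conjugation of $\begin{pmatrix} 0 & -\epsilon \\ \epsilon^{-1} & 0 \end{pmatrix}$ by $\begin{pmatrix} 0 & 1 \\ -1 & 0 \end{pmatrix}$ fixes this matrix (direct check), so by \eqref{E-eqn-of-G_r-----in-----terms-------of-----F_r---in-S-basic-notions} the matrix also equals $\cG_3(-\epsilon, \epsilon^{-1}, -\epsilon)$. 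That settles (ii).

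For part (i), the diagonal matrix $\begin{pmatrix} \epsilon & 0 \\ 0 & \epsilon^{-1} \end{pmatrix}$ has determinant $1$ and is the product of two anti-diagonal ones; the natural move is to write it as a length-$4$ product. Using the length-$4$ product formula for $\cF_4(x, y, z, w) = x_{\{1,2\}} y_{\{2,1\}} z_{\{1,2\}} w_{\{2,1\}}$, I would set $x = 0$ to kill the first factor, reducing to $\cF_3$-type expression $y_{\{2,1\}} z_{\{1,2\}} w_{\{2,1\}}$, and then solve $1 + zy$-type equations with $y = -\epsilon^{-1}$, $z = \cdots$, $w = \cdots$ (all scalars in $\bF_q$) so that the product is diagonal $\mathrm{diag}(\epsilon, \epsilon^{-1})$; a short computation of the $2\times 2$ product $\begin{pmatrix} 1 & 0 \\ y & 1 \end{pmatrix}\begin{pmatrix} 1 & z \\ 0 & 1 \end{pmatrix}\begin{pmatrix} 1 & 0 \\ w & 1 \end{pmatrix}$ and matching entries gives the parameters explicitly (e.g. $y = w$ with $1 + zy = \epsilon$ and $y + w + wzy = 0$ forcing $2y + zy^2 = 0$, i.e. $z = -2/y$, consistent with $1 + zy = 1 - 2 = -1$; so one instead takes $x \neq 0$ and uses all four factors). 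The only real bookkeeping obstacle is getting a genuinely diagonal — not merely triangular — product of the right shape, which forces using all four elementary factors rather than three; once the four scalars in $\bF_q$ are found, membership in $\cF_4(\A^4)$ is immediate, and membership in $\cG_4(\A^4)$ follows from Lemma \ref{L-explicit-eqn-of-G_r----in----S---basic-notions}(i) with $h = 2$ (or, as in (ii), from the observation that conjugating a diagonal matrix of determinant $1$ by $\begin{pmatrix} 0 & 1 \\ -1 & 0 \end{pmatrix}$ sends $\mathrm{diag}(\epsilon, \epsilon^{-1})$ to $\mathrm{diag}(\epsilon^{-1}, \epsilon)$, so applying \eqref{E-eqn-of-G_r-----in-----terms-------of-----F_r---in-S-basic-notions} to the parameter tuple for $\epsilon^{-1}$ yields the $\cG_4$-representation for $\epsilon$). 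The main obstacle, such as it is, is purely computational: pinning down the four (resp. three) scalar parameters so that the matrix product collapses to exactly the desired normal form.
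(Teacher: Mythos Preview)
Your approach is the same as the paper's---write down explicit parameters in $\bF_q$ and verify---and your $\cF_3$ computation for part~(ii) matches the paper exactly: $\cF_3(-\epsilon,\epsilon^{-1},-\epsilon)=\begin{pmatrix}0&-\epsilon\\ \epsilon^{-1}&0\end{pmatrix}$. Two issues remain. First, your claim that conjugation by $\begin{pmatrix}0&1\\-1&0\end{pmatrix}$ \emph{fixes} the anti-diagonal matrix is false: a direct check shows it sends $\begin{pmatrix}0&-\epsilon\\ \epsilon^{-1}&0\end{pmatrix}$ to $\begin{pmatrix}0&-\epsilon^{-1}\\ \epsilon&0\end{pmatrix}$. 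This does not break the argument---since $\epsilon^{-1}\in\bF_q^\times$ as well, your $\cF_3$ formula applied to $\epsilon^{-1}$ gives $\cF_3(-\epsilon^{-1},\epsilon,-\epsilon^{-1})=\begin{pmatrix}0&-\epsilon^{-1}\\ \epsilon&0\end{pmatrix}$, and then \eqref{E-eqn-of-G_r-----in-----terms-------of-----F_r---in-S-basic-notions} yields $\cG_3(-\epsilon^{-1},\epsilon,-\epsilon^{-1})=\begin{pmatrix}0&-\epsilon\\ \epsilon^{-1}&0\end{pmatrix}$, which is precisely the paper's formula---but the ``direct check'' you assert is wrong as stated.

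Second, part~(i) is left incomplete: your attempt with $x=0$ correctly dead-ends at $\epsilon=-1$, and you then defer (``once the four scalars in $\bF_q$ are found'') without producing them. The paper simply writes down $\cF_4(-\epsilon,\,\epsilon^{-1}-1,\,1,\,\epsilon-1)=\begin{pmatrix}\epsilon&0\\0&\epsilon^{-1}\end{pmatrix}$ and $\cG_4((\epsilon-1)/\epsilon,\,-1,\,1-\epsilon,\,1/\epsilon)=\begin{pmatrix}\epsilon&0\\0&\epsilon^{-1}\end{pmatrix}$; either can be verified in two lines. Your $\cG_4$-from-$\cF_4$ strategy via conjugation (correctly noting this time that it swaps $\epsilon\leftrightarrow\epsilon^{-1}$) is fine once the $\cF_4$ parameters are in hand.
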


\begin{proof}

Part $(i)$ follows immediately by noting that
\begin{align*}
\begin{pmatrix} \epsilon & 0 \\ 0 & \epsilon^{-1} \end{pmatrix}  = \cG_4((\epsilon - 1)/\epsilon, -1, 1 - \epsilon, 1/\epsilon) = \cF_4(-\epsilon, \epsilon^{-1} - 1, 1, \epsilon - 1) \in \cG_4(\A^4) \cap \cF_4(\A^4).
\end{align*}

Since
\begin{align*}
\begin{pmatrix} 0 & -\epsilon \\ \epsilon^{-1} & 0 \end{pmatrix} = \cG_3(-\epsilon^{-1}, \epsilon, -\epsilon^{-1}) = \cF_3(-\epsilon, \epsilon^{-1}, -\epsilon) \in \cG_3(\A^3) \cap \cF_3(\A^3),
\end{align*}
we obtain the assertion in part $(ii)$.

\end{proof}

Combining Lemmas \ref{L-relations-between-------F_r------in---------S-basic-notions}, \ref{L-relations----between-G_r------in---------S-basic-notions}, and \ref{L--epsilon-matrix-in-G4(A^4)--cap--F4(A^4)----in---S----basic-notions}, we obtain the following result that we will need in the proof of our main theorem.

\begin{corollary}
\label{C-epsilon-matrix-times-G_r---in------S---basic-notions}

\begin{itemize}

\item []

\item [(i)] For any unit $\epsilon \in \bF_q^{\times}$ and any integer $r \ge 1$,
\begin{align*}
\begin{pmatrix} \epsilon & 0 \\ 0 & \epsilon^{-1} \end{pmatrix} \cG_r(\A^r) \subset \cG_{r + 4}(\A^{r + 4}).
\end{align*}

\item [(ii)] For any unit $\epsilon \in \bF_q^{\times}$ and any integer $r \ge 1$,
\begin{align*}
\begin{pmatrix} \epsilon & 0 \\ 0 & \epsilon^{-1} \end{pmatrix} \cF_r(\A^r) \subset \cF_{r + 4}(\A^{r + 4}).
\end{align*}

\end{itemize}

\end{corollary}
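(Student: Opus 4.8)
The plan is to obtain the corollary purely formally by first placing the diagonal matrix inside one of the basic building-block families of length $4$ and then concatenating, using the multiplicativity statements already established.

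First I would prove part (ii). By Lemma~\ref{L--epsilon-matrix-in-G4(A^4)--cap--F4(A^4)----in---S----basic-notions}(i), for every unit $\epsilon \in \bF_q^{\times}$ the matrix $\begin{pmatrix} \epsilon & 0 \\ 0 & \epsilon^{-1} \end{pmatrix}$ belongs to $\cF_4(\A^4)$. Hence, for any $(a_1,\dots,a_r) \in \A^r$, the product $\begin{pmatrix} \epsilon & 0 \\ 0 & \epsilon^{-1} \end{pmatrix}\cF_r(a_1,\dots,a_r)$ lies in $\cF_4(\A^4)\cF_r(\A^r)$. Writing $4 = 2h$ with $h = 2 \ge 1$, Lemma~\ref{L-relations-between-------F_r------in---------S-basic-notions}(ii) gives $\cF_4(\A^4)\cF_r(\A^r) \subset \cF_{4+r}(\A^{4+r})$, which is exactly the inclusion asserted in (ii).

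Part (i) is then handled in the same way with $\cF$ replaced by $\cG$ throughout: Lemma~\ref{L--epsilon-matrix-in-G4(A^4)--cap--F4(A^4)----in---S----basic-notions}(i) also puts $\begin{pmatrix} \epsilon & 0 \\ 0 & \epsilon^{-1} \end{pmatrix}$ in $\cG_4(\A^4)$, and Lemma~\ref{L-relations----between-G_r------in---------S-basic-notions}(ii) with $h = 2$ yields $\cG_4(\A^4)\cG_r(\A^r) \subset \cG_{4+r}(\A^{4+r})$. Combining the two gives the claim.

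There is no genuine obstacle here, as this is a bookkeeping corollary of three already-proved lemmas; the only point that deserves a moment's attention is that $4$ is even, so one invokes part (ii) of Lemmas~\ref{L-relations-between-------F_r------in---------S-basic-notions} and \ref{L-relations----between-G_r------in---------S-basic-notions} (the "even-length" concatenation statements) rather than part (iii), and this is automatic. One could alternatively bypass Lemma~\ref{L--epsilon-matrix-in-G4(A^4)--cap--F4(A^4)----in---S----basic-notions} by expanding the diagonal matrix directly as a product of four elementary factors, but routing through the stated lemmas keeps the count of parameters (the increment by $4$) transparent, which is what matters for the eventual tally of $52$ in Theorem~\ref{Thm-the-1st-main-thm-in-the-introduction}.
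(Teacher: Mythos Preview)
Your proposal is correct and matches the paper's own argument exactly: the paper simply states that the corollary follows by combining Lemmas~\ref{L-relations-between-------F_r------in---------S-basic-notions}, \ref{L-relations----between-G_r------in---------S-basic-notions}, and \ref{L--epsilon-matrix-in-G4(A^4)--cap--F4(A^4)----in---S----basic-notions}, which is precisely the route you take.
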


\subsection{Definition of $\Psi$}
\label{SS-Psi} 

In this subsection, we recall the notion of the polynomial matrix $\Phi_3$ in Vaserstein \cite[page {\bf989}]{Vaserstein} that will be denoted by $\Psi$ in this paper. 

Let $\Psi \in \SL_2(\A[m_1, m_2, m_3])$ be the polynomial matrix in three variables $m_1, m_2, m_3$ defined by
\begin{align}
\label{E-Psi}
\Psi(m_1, m_2, m_3) = \begin{pmatrix} 1 + m_1m_2m_3 & m_1^2m_3 \\ - m_2^2m_3 & 1 - m_1m_2m_3 \end{pmatrix} \in \SL_2(\A[m_1, m_2, m_3]).
\end{align}

Note that $\Psi(m_1, m_2, m_3)$ is unipotent in $\SL_2(\A[m_1, m_2, m_3])$ since $(1 + m_1m_2m_3) + (1 - m_1m_2m_3) = 2$. 

\begin{remark}

The following remark is due to the referee. Since $\Psi$ has rational integral coefficients, one can view $\Psi$ as a polynomial matrix in $\SL_2(\fR[m_1, m_2, m_3])$ for any commutative ring $\fR$ with $1$. When $\fR$ is a principal ideal domain, one can show that every unipotent matrix in $\SL_2(\fR)$ lies in the image of $\Psi$. Indeed, every unipotent matrix $\mathfrak{M} = \begin{pmatrix} a & b \\ c & d \end{pmatrix} \in \SL_2(\fR)$ satisfies $a + d = 2$ and $ad - bc = 1$. If we let $a = 1 + w$ for some $w \in \fR$, then 
\begin{align*}
\mathfrak{M} = \begin{pmatrix} 1 + w & b \\ c & 1 - w \end{pmatrix}.
\end{align*}
Since $ad - bc = 1$, we know that $w^2 = -bc$. Set $m_3 = \gcd(b, c)$. Then replacing $m_3$ by $\epsilon m_3$ for some unit $\epsilon \in \fR$, if necessary, one can write $w = m_1m_2m_3$, $b = m_1^2m_3$, and $c = -m_2^2m_3$ for some $m_1, m_2 \in \fR$. Thus 
\begin{align*}
\mathfrak{M} = \begin{pmatrix} 1 + m_1m_2m_3 & m_1^2m_3 \\ -m_2^2m_3 & 1 - m_1m_2m_3 \end{pmatrix},
\end{align*}
which lies in the image of $\Psi$.

\end{remark}

\subsection{Definitions of $\Gamma$ and $\cM_{\Gamma}$}
\label{SS-Gamma-M_Gamma}

In this subsection, we recall the notion of the polynomial matrix $\Phi_4$ in Vaserstein \cite[page {\bf989}]{Vaserstein} that will be denoted by $\Gamma$ in this paper. 

Let $\Gamma \in \SL_2(\A[m_1, m_2, m_3, m_4])$ be the polynomial matrix defined by
\begin{align*}
\Gamma(m_1, m_2, m_3, m_4) = \begin{pmatrix} 1 - m_2m_4 & m_2^2 \\ -m_4^2 & 1 + m_2m_4 \end{pmatrix}\begin{pmatrix}1 - m_1m_3 & m_1^2 \\ -m_3^2 & 1 + m_1m_3 \end{pmatrix}\begin{pmatrix} 1 - m_2m_4 & m_2^2 \\ -m_4^2 & 1 + m_2m_4 \end{pmatrix} \begin{pmatrix} 0 & - 1 \\ 1 & 0 \end{pmatrix}.
\end{align*}

Set
\begin{align*}
\cM_{\Gamma} = \{\alpha \alpha^T \; | \; \alpha \in \SL_2(\A) \} \subset \SL_2(\A).
\end{align*} 
Following the same arguments as in Vaserstein \cite[page {\bf989}]{Vaserstein} with $\cM_{\Gamma}, \Gamma$ in the roles of $X_4, \Phi_4$, respectively, one sees that $\cM_{\Gamma} \subset \Gamma(\A^4).$

\subsection{Definitions of $\Lambda$, $\cM_{\Lambda}$, and $\cM_{\Lambda}^T$}
\label{SS-Lambda-M_Lambda}

In this subsection, we recall the notions of $\Phi_5$ and $X_5$ in Vaserstein \cite[page {\bf990}]{Vaserstein} that will be denoted by $\Lambda$ and $\cM_{\Lambda}$, respectively in this paper. The polynomial matrix $\Lambda$ will play a central role in a polynomial parametrization of $\SL_2(\A)$. 

Let $\Lambda \in \SL_2(\A[m_1, m_2, m_3, m_4, m_5])$ be the polynomial matrix in five variables $m_1, m_2, m_3, m_4, m_5$ defined by
\begin{align*}
\Lambda(m_1, m_2, m_3, m_4, m_5) = \begin{pmatrix} m_5 & 0 \\ 0 & 1 \end{pmatrix} \Gamma(1 + m_1m_5, m_2m_5, m_3m_5, 1 + m_4m_5)\begin{pmatrix} m_5 & 0 \\ 0 & 1 \end{pmatrix}^{-1}.
\end{align*}

Let $\cM_{\Lambda}$ be the set of matrices defined by
\begin{align*}
\cM_{\Lambda} = \left\{\begin{pmatrix} 1 + ae & be^2 \\ c & 1 + de \end{pmatrix}\begin{pmatrix} 1 + ae & ce^2 \\ b & 1 + de \end{pmatrix} \; | \; \text{$a, b, c, d, e \in \A$ such that} \;\begin{pmatrix} 1 + ae & be^2 \\ c & 1 + de \end{pmatrix} \in \SL_2(\A) \right\}
\end{align*}
Following the same arguments as in Vaserstein \cite[page {\bf990}]{Vaserstein}, we get that
\begin{align}
\label{E--M_Lambda--subset--of---Lambda(A5)--in--SS-Lambda-M_Lambda}
\cM_{\Lambda} \subset \Lambda(\A^5) \subset \SL_2(\A).
\end{align}

Set 
\begin{align*}
\cM_{\Lambda}^{-1} &= \{\alpha^{-1} \; | \; \alpha \in \cM_{\Lambda}\}, \\
\cM_{\Lambda}^T &= \{\alpha^T \; | \; \alpha \in \cM_{\Lambda}\}, \\
\cM_{\Lambda}^{-1, T} &= \{\alpha^{-1} \; | \; \alpha \in \cM_{\Lambda}^T\}.
\end{align*}

The next result follows immediately from Lemma \ref{L-eqn-of-alpha-after-conjugating-by-0--1--minus1---0}.
\begin{lemma}
\label{L-M_Lambda==conjugation-of-M_Lambda}

\begin{itemize}

\item []

\item [(i)] $\cM_{\Lambda}^{-1} = \cM_{\Lambda}$, and $\cM_{\Lambda}^{-1, T} = \cM_{\Lambda}^T$.

\item [(ii)]
\begin{align*}
\cM_{\Lambda}^T = \left\{\begin{pmatrix} 0 & 1 \\ - 1 & 0 \end{pmatrix}^{-1} \alpha \begin{pmatrix} 0 & 1 \\ - 1 & 0 \end{pmatrix} \; | \; \alpha \in \cM_{\Lambda} \right\} = \left\{\begin{pmatrix} 0 & 1 \\ - 1 & 0 \end{pmatrix} \alpha \begin{pmatrix} 0 & 1 \\ - 1 & 0 \end{pmatrix}^{-1} \; | \; \alpha \in \cM_{\Lambda} \right\}.
\end{align*}

\item [(iii)]
\begin{align*}
\cM_{\Lambda} = \left\{\begin{pmatrix} 0 & 1 \\ - 1 & 0 \end{pmatrix}^{-1} \alpha \begin{pmatrix} 0 & 1 \\ - 1 & 0 \end{pmatrix} \; | \; \alpha \in \cM_{\Lambda}^T \right\} = \left\{\begin{pmatrix} 0 & 1 \\ - 1 & 0 \end{pmatrix} \alpha \begin{pmatrix} 0 & 1 \\ - 1 & 0 \end{pmatrix}^{-1} \; | \; \alpha \in \cM_{\Lambda}^T \right\}.
\end{align*}

\end{itemize}
\end{lemma}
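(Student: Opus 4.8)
The plan is to derive all three parts from the single matrix identity in Lemma~\ref{L-eqn-of-alpha-after-conjugating-by-0--1--minus1---0} together with one genuinely computational input: the fact that $\cM_{\Lambda}$ is closed under inversion. Throughout I would write $J = \begin{pmatrix} 0 & 1 \\ -1 & 0 \end{pmatrix}$, so that $J^{-1} = J^{T} = -J$ and $J^{2} = -I$, and use repeatedly that on $\SL_2(\A)$ transposition and inversion commute and each is a bijection of $\SL_2(\A)$ onto itself. Note also that the second assertion of part (i), namely $\cM_{\Lambda}^{-1,T} = \cM_{\Lambda}^{T}$, is formally equivalent to the first: applying $(\cdot)^{T}$ to $\cM_{\Lambda}^{-1} = \cM_{\Lambda}$ and using that $(\cdot)^{T}$ commutes with $(\cdot)^{-1}$ yields it at once. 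So the whole lemma reduces to proving $\cM_{\Lambda}^{-1} = \cM_{\Lambda}$ and then a formal manipulation.

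First I would prove part (i). Since inversion is an involution it suffices to show $\cM_{\Lambda}^{-1} \subseteq \cM_{\Lambda}$, and for this I would unwind the definition of $\cM_{\Lambda}$ from Subsection~\ref{SS-Lambda-M_Lambda}. Writing $\phi(a,b,c,d,e) = \begin{pmatrix} 1+ae & be^{2} \\ c & 1+de \end{pmatrix}$, one observes that the second factor in the definition of $\cM_{\Lambda}$ is simply $\phi(a,c,b,d,e)$, so $\cM_{\Lambda} = \{\phi(a,b,c,d,e)\phi(a,c,b,d,e) : \phi(a,b,c,d,e) \in \SL_2(\A)\}$ (the second factor automatically lies in $\SL_2(\A)$, the determinant being symmetric under $b \leftrightarrow c$). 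A direct check gives $\phi(a,b,c,d,e)^{-1} = \phi(d,-b,-c,a,e)$, hence the inverse of a generic element of $\cM_{\Lambda}$ is $\phi(a,c,b,d,e)^{-1}\phi(a,b,c,d,e)^{-1} = \phi(d,-c,-b,a,e)\phi(d,-b,-c,a,e)$, which is again of the required form, with parameters $(d,-c,-b,a,e)$ and first factor $\phi(d,-c,-b,a,e) = \phi(a,c,b,d,e)^{-1} \in \SL_2(\A)$. This gives $\cM_{\Lambda}^{-1} \subseteq \cM_{\Lambda}$, hence equality.

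For parts (ii) and (iii) the remaining work is purely formal. By Lemma~\ref{L-eqn-of-alpha-after-conjugating-by-0--1--minus1---0}, for every $\alpha \in \SL_2(\A)$ one has $J^{-1}\alpha J = J\alpha J^{-1} = (\alpha^{-1})^{T}$. Letting $\alpha$ range over $\cM_{\Lambda}$ and invoking part (i),
\begin{align*}
\{J^{-1}\alpha J : \alpha \in \cM_{\Lambda}\} = \{J\alpha J^{-1} : \alpha \in \cM_{\Lambda}\} &= \{(\alpha^{-1})^{T} : \alpha \in \cM_{\Lambda}\} \\
&= \{\beta^{T} : \beta \in \cM_{\Lambda}^{-1}\} = \{\beta^{T} : \beta \in \cM_{\Lambda}\} = \cM_{\Lambda}^{T},
\end{align*}
which is (ii). Running the identical computation with $\cM_{\Lambda}^{T}$ in place of $\cM_{\Lambda}$ and using the second assertion of part (i) gives $\{J^{-1}\alpha J : \alpha \in \cM_{\Lambda}^{T}\} = \{\beta^{T} : \beta \in \cM_{\Lambda}^{-1,T}\} = \{\beta^{T} : \beta \in \cM_{\Lambda}^{T}\} = \cM_{\Lambda}$, which is (iii).

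I expect the only step that is not pure bookkeeping to be the inclusion $\cM_{\Lambda}^{-1} \subseteq \cM_{\Lambda}$ in part (i): it is the one place where one must examine the actual matrix entries appearing in the definition of $\cM_{\Lambda}$ rather than manipulate the defining sets formally. It is still routine once the reparametrization $\phi(a,b,c,d,e) \mapsto \phi(d,-b,-c,a,e)$ for inverses is spotted, and everything after it is an immediate consequence of Lemma~\ref{L-eqn-of-alpha-after-conjugating-by-0--1--minus1---0}.
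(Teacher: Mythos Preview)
Your proof is correct and aligns with the paper's approach: the paper gives no proof at all beyond the sentence ``The next result follows immediately from Lemma~\ref{L-eqn-of-alpha-after-conjugating-by-0--1--minus1---0}.'' Your derivation of parts (ii) and (iii) from Lemma~\ref{L-eqn-of-alpha-after-conjugating-by-0--1--minus1---0} together with part (i) is exactly what that sentence intends, and your explicit reparametrization $\phi(a,b,c,d,e)^{-1} = \phi(d,-b,-c,a,e)$ supplies the one computational ingredient (closure of $\cM_{\Lambda}$ under inversion) that the paper leaves to the reader.
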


We define the polynomial matrix $\Lambda^T \in \SL_2(\A[m_1, m_2, m_3, m_4, m_5])$ in five variables $m_1, m_2, m_3, m_4, m_5$ by
\begin{align*}
\Lambda^T(m_1, m_2, m_3, m_4, m_5) = \begin{pmatrix} 0 & 1 \\ -1 & 0 \end{pmatrix} \Lambda(m_1, m_2, m_3, m_4, m_5)\begin{pmatrix} 0 & 1 \\ -1 & 0 \end{pmatrix}^{-1}.
\end{align*}
Equation (\ref{E--M_Lambda--subset--of---Lambda(A5)--in--SS-Lambda-M_Lambda}) and Lemma \ref{L-M_Lambda==conjugation-of-M_Lambda}$(ii)$ imply that 
\begin{align}
\label{E--M_Lambda^T--subset--of--Lambda^T(A5)------SS--Lambda--M_Lambda}
\cM_{\Lambda}^T \subset \Lambda^T(\A^5).
\end{align}

\subsection{The $d$-th power residue symbol in $\A$}
\label{SS-the-d-th-power-residue-symbol-in-A}

In this subsection, we briefly recall the notion of the $d$-th power residue symbol. We refer the reader to Rosen \cite[Chapter {\bf3}]{Rosen} for a more complete account.  

Let $\wp$ be a prime in $\A$, and let $d$ be a positive divisor of $q - 1$. (Recall that $q$ is the number of elements in $\bF_q$.) If $m$ is an element in $\A$ such that $\wp$ does not divide $m$, then it is well-known (see Rosen \cite[pages {\bf23}, {\bf24}]{Rosen}) that there exists a unique element of $\bF_q^{\times}$, denoted by $\left(\dfrac{m}{\wp}\right)_d$, such that
\begin{align*}
m^{\dfrac{q^{\deg(\wp)}- 1}{d}} \equiv \left(\dfrac{m}{\wp}\right)_d \pmod{\wp}.
\end{align*}

If $m$ is an element in $\A$ such that $\wp$ divides $m$, we simply define $\left(\dfrac{m}{\wp}\right)_d = 0$. We call the symbol $\left(\dfrac{m}{\wp}\right)_d$ the \textit{$d$-th power residue symbol}.

\section{$\SL_2(\A)$ is a polynomial family}
\label{S-SL2(A)-polynomial-family}

In this section, we prove Theorem \ref{Thm-the-1st-main-thm-in-the-introduction}. Although our proof is based on the work of Vaserstein \cite{Vaserstein}, we need to introduce new ideas to overcome several technical difficulties arising in the function field setting. Vaserstein \cite{Vaserstein} used Dirichlet's theorem on primes in arithmetic progressions and the quadratic residue symbol in some auxiliary results to obtain a polynomial parametrization for $\SL_2(\bZ)$. We cannot use these tools in the function field setting. For the proof of Theorem \ref{Thm-the-1st-main-thm-in-the-introduction}, we instead exploit the $(q - 1)$-th power residue symbol, and an improved version of the function field analogue of Dirichlet's theorem that justifies the existence of many irreducible polynomials of a given degree $d$ in an arithmetic progression in $\A$, provided that $d$ is sufficiently large.

\begin{lemma}
\label{L-1st-lemma-in-S-SL2(A)}

Let $a, b, u \in \A$, and let $\alpha = \begin{pmatrix} 1 + au & bu \\ * & * \end{pmatrix} \in \SL_2(\A)$. Then there exist elements $m, n \in \A$, $\epsilon \in \bF_q^{\times}$, and $\beta \in \cM_{\Lambda}$ such that the matrix
\begin{align}
\label{E-the-matrix-in-1st-L-in-S-SL2(A)}
\alpha (um)_{\{1, 2\}}n_{\{2, 1\}}(-\wp u)_{\{1, 2\}}\beta(-\epsilon^{-1} u n)_{\{1, 2\}}(-\epsilon m)_{\{2, 1\}}
\end{align}
is of the form $\begin{pmatrix} * & * \\ \epsilon b & 1 + au \end{pmatrix}$, where $\wp = b + m(1 + au)$.

\end{lemma}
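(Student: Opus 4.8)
The plan is to produce the required factorization by a direct, explicit computation, introducing the auxiliary parameters $m$, $n$, $\epsilon$ one at a time and using the Dirichlet-type theorem and the $(q-1)$-th power residue symbol at exactly the point where a genuine arithmetic input is needed. First I would multiply $\alpha$ on the right by $(um)_{\{1,2\}}n_{\{2,1\}}$, i.e.\ by $\begin{pmatrix} 1 & um \\ 0 & 1\end{pmatrix}\begin{pmatrix} 1 & 0 \\ n & 1\end{pmatrix}$, and compute the top row: the $(1,1)$ entry becomes $(1+au) + n(um)(1+au)\cdot(\text{stuff})$ — more precisely, writing the original top row of $\alpha$ as $(1+au,\,bu)$, right-multiplication by $(um)_{\{1,2\}}$ gives top row $(1+au,\,(1+au)um + bu)$, and then $n_{\{2,1\}}$ mixes the columns. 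The point of the choice $\wp = b + m(1+au)$ is that after dividing out the common factor $u$ the relevant entry becomes $u\wp$; so $m$ is to be chosen (via the function field Dirichlet theorem) so that $\wp = b + m(1+au)$ is an irreducible polynomial in $\A$ of a suitably large degree, which makes $\wp$ prime to $a$ (or to $1+au$) and hence lets us apply the power residue symbol to it.

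Next I would handle the factor $(-\wp u)_{\{1,2\}}\beta$. Having arranged that $\wp$ is prime, the $(q-1)$-th power residue symbol gives a unit $\epsilon\in\bF_q^\times$ with $a^{q^{\deg\wp}-1}\equiv\epsilon\pmod\wp$, or rather the relevant congruence relating $1+au$ (which is $\equiv 1$, hence a $(q-1)$-th power mod $b$-type considerations) to a unit modulo $\wp$; this is what allows the matrix $\begin{pmatrix} 1+au & bu \\ * & *\end{pmatrix}$, after the manipulations, to be put in the shape of an element of $\cM_\Lambda$ — recall $\cM_\Lambda$ consists of products $\begin{pmatrix} 1+ae & be^2 \\ c & 1+de\end{pmatrix}\begin{pmatrix} 1+ae & ce^2 \\ b & 1+de\end{pmatrix}$, so I need the off-diagonal entries to be of the form $(\cdot)e^2$ and $(\cdot)$ with matching diagonal $1+(\cdot)e$. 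I would choose $\beta\in\cM_\Lambda$ so that the product up through $\beta$ has the desired first column $(\epsilon b$-related entry, $\,\ast)$ after the remaining right factors are applied. Then the final two factors $(-\epsilon^{-1}un)_{\{1,2\}}(-\epsilon m)_{\{2,1\}}$ are precisely the inverses (up to the unit twist by $\epsilon$) of the first two factors $(um)_{\{1,2\}}n_{\{2,1\}}$, so multiplying them back in restores the structure and forces the final matrix into the claimed form $\begin{pmatrix} \ast & \ast \\ \epsilon b & 1+au\end{pmatrix}$; checking the $(2,1)$ and $(2,2)$ entries is then a bounded matrix computation.

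The main obstacle, and the step I expect to cost the most work, is the middle step: verifying that after the first two right-multiplications the matrix genuinely lands (for an appropriate choice of $\beta\in\cM_\Lambda$) in a coset $\cM_\Lambda\cdot(\text{explicit elementary matrices})$. This requires (a) knowing that the entry $\wp = b + m(1+au)$ can be taken irreducible, which is where the improved function field Dirichlet theorem enters — one must check the arithmetic progression $b + m(1+au)$ (as $m$ ranges over $\A$) is not forced to be reducible, i.e.\ $\gcd(b, 1+au)$ behaves correctly, and then invoke the theorem to get infinitely many irreducible values, in particular one of large enough degree; and (b) solving the resulting system: the entries of $\beta$ (equivalently its parameters $a_\beta,b_\beta,c_\beta,d_\beta,e_\beta$) must be chosen so that $1 + a_\beta e_\beta$, $b_\beta e_\beta^2$, etc., match the entries produced by the earlier factors modulo the constraint $\det = 1$. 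I would isolate this as a sublemma: given the column $\begin{pmatrix} 1+au \\ u\wp\end{pmatrix}$ (or its image under the elementary factors) with $\wp$ prime and $\left(\tfrac{1+au}{\wp}\right)_{q-1}$ a known unit $\epsilon$, there is $\beta\in\cM_\Lambda$ realizing the needed transformation — its proof is the congruence-plus-gcd bookkeeping that Vaserstein does in the $\bZ$ case, with the quadratic symbol replaced by the $(q-1)$-th power residue symbol. Everything else (the two outer cancellations, reading off the final $(2,1)$-entry as $\epsilon b$) is routine $2\times 2$ arithmetic and I would present it compactly.
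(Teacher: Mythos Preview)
Your overall architecture matches the paper's---choose $m$ via the function-field Dirichlet theorem so that $\wp = b + m(1+au)$ is prime, invoke the $(q-1)$-th power residue symbol, build $\beta\in\cM_\Lambda$, then undo with the last two elementary factors---but the middle step, which you correctly flag as the crux, is not merely ``congruence-plus-gcd bookkeeping,'' and the specific choices you propose do not work. The residue symbol is applied to $a$, not to $1+au$: the paper sets $\epsilon_1=\left(\frac{a}{\wp}\right)_{q-1}$ and needs $a\epsilon_1$ to be a $(q-1)$-th power modulo $\wp$. For that, ``$\wp$ prime of large degree'' is not enough; one must arrange $\deg(\wp)\equiv q-2\pmod{q-1}$, so that $\left(\frac{\epsilon_1}{\wp}\right)_{q-1}=\epsilon_1^{\deg(\wp)}=\epsilon_1^{-1}$ and hence $\left(\frac{a\epsilon_1}{\wp}\right)_{q-1}=1$. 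This degree congruence is the genuine arithmetic input from the strong Dirichlet theorem and is absent from your sketch. With $a\equiv\epsilon a_1^{q-1}\pmod{\wp}$ in hand (where $\epsilon=\epsilon_1^{-1}$), the element $n$ is then \emph{determined} by $a+n\wp=\epsilon a_1^{q-1}$, and the product $\lambda=\alpha(um)_{\{1,2\}}n_{\{2,1\}}(-\wp u)_{\{1,2\}}$ has top row $\bigl(1+u\epsilon a_1^{q-1},\,-\wp u^2\epsilon a_1^{q-1}\bigr)$.

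The second missing idea is why this row has the $\cM_\Lambda$ shape: since $q$ is odd, $q-1=2q_1$ and $u^2a_1^{q-1}=(ua_1^{q_1})^2$, so with $e=ua_1^{q_1}$ the $(1,2)$-entry of $\lambda$ is $(\text{something})\cdot e^2$ and the diagonal entries are $1+(\cdot)e$. You never invoke the oddness of $q$, and without it the $e^2$ structure required by $\cM_\Lambda$ does not appear. Finally, $\beta$ is not found by solving a system for parameters $a_\beta,b_\beta,\ldots$ as you suggest; it is written down explicitly as $\beta=\lambda^{-1}\rho$, where $\rho$ has the same diagonal as $\lambda^{-1}$ but the off-diagonal entries swapped (so $(2,1)$-entry $\epsilon\wp$ and $(1,2)$-entry $-c e^2$). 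That $\lambda^{-1}\rho\in\cM_\Lambda$ is then immediate from the definition, and it is $\rho$ (not $\lambda\beta$) whose bottom row $(\epsilon\wp,\,1+(a+n\wp)u)$ gets hit by $(-\epsilon^{-1}un)_{\{1,2\}}(-\epsilon m)_{\{2,1\}}$ to yield $(\epsilon b,\,1+au)$.
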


\begin{proof}

If $1 + au = 0$, letting $m = n = 0$, $\wp = b$, and $\epsilon = -u \in \bF_q^{\times}$, we see that Lemma \ref{L-1st-lemma-in-S-SL2(A)} follows immediately.

For the rest of the proof, suppose that $1 + au \ne 0$. Since $\det(\alpha) = 1$, we deduce that $1 + au$, $b$ are relatively prime in $\A$. Set
\begin{align}
\label{E-wp-in-1st-L-in-S-SL2(A)}
\wp= b + m(1 + au),
\end{align}
where $m$ will be determined shortly. By Rosen \cite[Theorem {\bf 4.8}]{Rosen}, we know that there are infinitely many elements $m$ in $\A$ such that for such an element $m$, the polynomial $\wp$ is a monic prime whose degree is congruent to $q - 2$ modulo $q - 1$ and greater than $\deg(b)$. Take such a monic prime $\wp$ of degree greater than $\deg(b)$ for some element $m \in \A$. We know that there is some integer $r$ such that
\begin{align}
\label{E-deg(wp)-in-1st-L-in-S-SL2(A)}
\deg(\wp) = q - 2 + (q - 1)r.
\end{align}

We now prove that there is an element $\epsilon \in \bF_q^{\times}$ such that 
\begin{align}
\label{E-a-equiv-epsilon-a1^(q-1)-mod-wp-in-1st-L-S-SL2(A)}
a \equiv \epsilon a_1^{q - 1} \pmod{\wp},
\end{align}
where $a_1$ is an element in $\A$. Indeed, denote by $\left(\dfrac{\cdot}{\wp}\right)_{q - 1}$ the $(q - 1)$-th power residue symbol (see Subsection \ref{SS-the-d-th-power-residue-symbol-in-A} for its definition). If $a \equiv 0 \pmod{\wp}$, then one can take $a_1 = 0$, and (\ref{E-a-equiv-epsilon-a1^(q-1)-mod-wp-in-1st-L-S-SL2(A)}) holds trivially.

If $a \not\equiv 0 \pmod{\wp}$, set
\begin{align}
\label{E-epsilon1-in-1st-L-in-S-SL2(A)}
\epsilon_1 = \left(\dfrac{a}{\wp}\right)_{q - 1} \in \bF_q^{\times}.
\end{align}

We see from \cite[Proposition {\bf3.2}]{Rosen} that 
\begin{align*}
\left(\dfrac{a\epsilon_1}{\wp}\right)_{q - 1} = \left(\dfrac{a}{\wp}\right)_{q - 1}\left(\dfrac{\epsilon_1}{\wp}\right)_{q - 1} = \epsilon_1 \left(\epsilon_1^{\dfrac{q- 1}{q - 1}\deg(\wp)}\right) = \epsilon_1^{(q - 1)(r + 1)} = 1,
\end{align*}
and it thus follows from \cite[Proposition {\bf3.1}]{Rosen} that there exists an element $a_1 \in \A$ such that $a \epsilon_1 \equiv a_1^{q - 1} \pmod{\wp}$. Now (\ref{E-a-equiv-epsilon-a1^(q-1)-mod-wp-in-1st-L-S-SL2(A)}) follows immediately by letting $\epsilon = \epsilon_1^{-1}$. 

By (\ref{E-a-equiv-epsilon-a1^(q-1)-mod-wp-in-1st-L-S-SL2(A)}), there exists an element $n \in \A$ such that 
\begin{align}
\label{E-a+n*wp-==-epsilon-a1^(q-1)-----in-1st-L-in-S-SL2(A)}
a + n\wp = \epsilon a_1^{q - 1}. 
\end{align}

Set
\begin{align}
\label{E-1st-eqn-of-lambda-in-1st-L-in--S--SL2(A)}
\lambda = \alpha (um)_{\{1, 2\}}n_{\{2, 1\}}(-\wp u)_{\{1, 2\}}.
\end{align}
We see from (\ref{E-wp-in-1st-L-in-S-SL2(A)}) and (\ref{E-a+n*wp-==-epsilon-a1^(q-1)-----in-1st-L-in-S-SL2(A)}) that
\begin{align}
\label{E-the-matrix-lambda-in-1st-L-in-S-SL2(A)}
\lambda = \alpha (um)_{\{1, 2\}}n_{\{2, 1\}}(-\wp u)_{\{1, 2\}} &= \begin{pmatrix} 1 + au & bu \\ * & * \end{pmatrix}\begin{pmatrix} 1 & mu \\ 0 & 1 \end{pmatrix}\begin{pmatrix} 1 & 0 \\ n & 1 \end{pmatrix}\begin{pmatrix} 1 & -\wp u  \\ 0 & 1 \end{pmatrix} \nonumber \\
&= \begin{pmatrix} 1 + au & \wp u \\ * & * \end{pmatrix}\begin{pmatrix} 1 & 0 \\ n & 1 \end{pmatrix}\begin{pmatrix} 1 & -\wp u  \\ 0 & 1 \end{pmatrix} \nonumber \\
&= \begin{pmatrix} 1 +  u \epsilon a_1^{q - 1} & \wp u \\ * & * \end{pmatrix}\begin{pmatrix} 1 & -\wp u \\ 0 & 1 \end{pmatrix} \nonumber \\
&= \begin{pmatrix} 1 +  u \epsilon a_1^{q - 1}  & -\wp u^2\epsilon a_1^{q - 1}\\ c & d \end{pmatrix},
\end{align}
where $c, d$ are some elements in $\A$.

By (\ref{E-1st-eqn-of-lambda-in-1st-L-in--S--SL2(A)}), and since $\alpha \in \SL_2(\A)$, we know that $\det(\lambda) = 1$, and thus (\ref{E-the-matrix-lambda-in-1st-L-in-S-SL2(A)}) tells us that
\begin{align*}
\lambda^{-1} = \begin{pmatrix} d  & \wp u^2\epsilon a_1^{q - 1}\\ -c & 1 +  u \epsilon a_1^{q - 1} \end{pmatrix}.
\end{align*}

Since $p$ is odd (recall that $p$ is the characteristic of $\bF_q$), one can write $q - 1 = 2q_1$ for some positive integer $q_1$, and thus $u^2a_1^{q - 1} = (u a_1^{q_1})^2$. Since $\det(\lambda) = \det(\lambda^{-1}) = 1$, we deduce that $d = 1 + d_1u a_1^{q_1}$ for some $d_1 \in \A$. Hence $\lambda^{-1}$ can be written in the form
\begin{align}
\label{E--1st-eqn--of--lambda^-(-1)--in-1st-L---in--S---SL2(A)}
\lambda^{-1} = \begin{pmatrix} 1 + d_1(u a_1^{q_1})  & \wp \epsilon (u a_1^{q _1})^2\\ -c & 1 +  (\epsilon a_1^{q_1})(ua_1^{q_1}) \end{pmatrix}.
\end{align}

Set
\begin{align}
\label{E-eqn-of-rho---in-1st-L-in----S--SL2(A)}
\rho = \begin{pmatrix} 1 + d_1(u a_1^{q_1})  & -c (u a_1^{q _1})^2\\ \epsilon\wp & 1 +  (\epsilon a_1^{q_1})(ua_1^{q_1}) \end{pmatrix}.
\end{align}
By (\ref{E-a+n*wp-==-epsilon-a1^(q-1)-----in-1st-L-in-S-SL2(A)}), one can write 
\begin{align*}
\rho =  \begin{pmatrix} *  & * \\ \epsilon \wp& 1 +  (a + n\wp)u \end{pmatrix}.
\end{align*}

By (\ref{E--1st-eqn--of--lambda^-(-1)--in-1st-L---in--S---SL2(A)}) and (\ref{E-eqn-of-rho---in-1st-L-in----S--SL2(A)}), we see that $\lambda^{-1}\rho \in \cM_{\Lambda}$, where $\cM_{\Lambda}$ is defined in Subsection \ref{SS-Lambda-M_Lambda}. Set
\begin{align}
\label{E-eqn-of-beta-in---1st-L---in---S--SL2(A)}
\beta = \lambda^{-1}\rho \in \cM_{\Lambda}.
\end{align}
We know that
\begin{align*}
\rho(-\epsilon^{-1}u n)_{\{1, 2\}} = \begin{pmatrix} *  & * \\ \epsilon\wp& 1 +  (a + n\wp)u \end{pmatrix} \begin{pmatrix} 1 & -\epsilon^{-1} u n \\ 0 & 1 \end{pmatrix}  = \begin{pmatrix} * & * \\ \epsilon \wp & 1 + au \end{pmatrix},
\end{align*}
and it thus follows from (\ref{E-wp-in-1st-L-in-S-SL2(A)}) that
\begin{align*}
\rho(-\epsilon^{-1} u n)_{\{1, 2\}}(-\epsilon m)_{\{2, 1\}} &= \begin{pmatrix} * & * \\ \epsilon \wp & 1 + au \end{pmatrix}\begin{pmatrix} 1 & 0 \\ -\epsilon m & 1 \end{pmatrix} \\
&= \begin{pmatrix} * & * \\ \epsilon(\wp - m(1 + au)) & 1 + au \end{pmatrix} \\
&= \begin{pmatrix} * & * \\ \epsilon b & 1 + au \end{pmatrix}.
\end{align*}

Lemma \ref{L-1st-lemma-in-S-SL2(A)} now follows immediately from (\ref{E-1st-eqn-of-lambda-in-1st-L-in--S--SL2(A)}) and (\ref{E-eqn-of-beta-in---1st-L---in---S--SL2(A)}).

\end{proof}

\begin{lemma}
\label{L-2nd-L-in-S-SL2(A)}

Let $\alpha = \begin{pmatrix} a & b \\ c & d \end{pmatrix} \in \SL_2(\A)$, and let $r$ be a positive integer. Then there exist $t^{(1)}, t^{(2)}, \ldots, t^{(10)} \in \A$, $\epsilon \in \bF_q^{\times}$, $\beta \in \cM_{\Lambda}$, and $\gamma \in \cM_{\Lambda}^T$ such that
\begin{align*}
\alpha^r t^{(1)}_{\{1, 2\}}t^{(2)}_{\{2, 1\}}t^{(3)}_{\{1, 2\}} \beta t^{(4)}_{\{1, 2\}}t^{(5)}_{\{2, 1\}}t^{(6)}_{\{1, 2\}}t^{(7)}_{\{2, 1\}}\gamma t^{(8)}_{\{2, 1\}}t^{(9)}_{\{1, 2\}}t^{(10)}_{\{2, 1\}} = \begin{pmatrix} a^r & \epsilon b \\ * & * \end{pmatrix}.
\end{align*}

\end{lemma}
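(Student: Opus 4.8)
The plan is to follow the blueprint laid out in the subsection on main ideas: first use the Cayley--Hamilton theorem to control the top row of $\alpha^r$ up to elementary multiplications, then invoke Lemma~\ref{L-1st-lemma-in-S-SL2(A)} twice (once for $\alpha^r$ and once for a suitable transpose) to replace the nondiagonal entries by scalar multiples of $a^r$ and $b$, absorbing the various correction factors into $\cM_{\Lambda}$ and $\cM_{\Lambda}^T$ and into a bounded string of elementary matrices.

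First I would write $\alpha^r$ explicitly. Since $\alpha \in \SL_2(\A)$ satisfies $\alpha^2 = (a+d)\alpha - I$ by Cayley--Hamilton, an easy induction gives $\alpha^r = p_r(a+d)\,\alpha - p_{r-1}(a+d)\,I$ for integer polynomials $p_r$ (the normalized Chebyshev/Dickson polynomials), hence
\begin{align*}
\alpha^r = \begin{pmatrix} p_r(a+d)\,a - p_{r-1}(a+d) & p_r(a+d)\,b \\ p_r(a+d)\,c & p_r(a+d)\,d - p_{r-1}(a+d) \end{pmatrix}.
\end{align*}
Writing $u = p_r(a+d)$ and $v = -p_{r-1}(a+d)$, the first row is $(au+v,\ ub)$, exactly the shape $\begin{pmatrix} 1 + au' & bu' \\ * & * \end{pmatrix}$ after a harmless relabeling once one notes $au+v = a u + v$; strictly, to match the hypothesis of Lemma~\ref{L-1st-lemma-in-S-SL2(A)} verbatim I would instead multiply $\alpha^r$ on the right by an elementary matrix $t^{(1)}_{\{1,2\}}$ to arrange a "$1$" in the $(1,1)$-slot modulo $b$, or more simply observe that $\gcd(au+v,\ ub)$ divides $1$ (determinant is $1$) so the pair $(au+v,\ ub)$ is unimodular and $au+v \equiv 1 + a(u-\tfrac{1-v}{\,?\,})\cdots$ — cleaner is to apply the lemma directly to the matrix $\begin{pmatrix} au+v & ub\\ *&*\end{pmatrix}$ after absorbing $v$: indeed $au+v$ and $ub$ already have the requisite form with the roles $a \leftsquigarrow a$, $b \leftsquigarrow b$, $u \leftsquigarrow u$ plus the additive shift $v$, so I would first multiply by $t^{(1)}_{\{1,2\}} = (v\text{-correction})_{\{1,2\}}$ to clear $v$, reducing to the literal hypothesis of Lemma~\ref{L-1st-lemma-in-S-SL2(A)}.

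Next I apply Lemma~\ref{L-1st-lemma-in-S-SL2(A)}: there are $m,n\in\A$, $\epsilon_0\in\bF_q^\times$, $\beta\in\cM_{\Lambda}$ such that
\begin{align*}
\alpha^r\,(um)_{\{1,2\}}n_{\{2,1\}}(-\wp u)_{\{1,2\}}\,\beta\,(-\epsilon_0^{-1}un)_{\{1,2\}}(-\epsilon_0 m)_{\{2,1\}} = \begin{pmatrix} * & * \\ \epsilon_0\, ub & 1+au \end{pmatrix}.
\end{align*}
This already uses five of the ten elementary slots and the $\beta$-slot; but it produces the target in the \emph{bottom} row rather than the top, and with $\epsilon_0 ub$ rather than $\epsilon b$. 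To fix the position I would transpose: taking transposes (equivalently conjugating by $\begin{pmatrix}0&1\\-1&0\end{pmatrix}$ via Lemma~\ref{L-eqn-of-alpha-after-conjugating-by-0--1--minus1---0}) swaps $\cM_\Lambda \leftrightarrow \cM_\Lambda^T$ and turns the bottom-left entry into a top-right entry, which is why the statement needs one factor $\gamma\in\cM_\Lambda^T$ and a second block of elementary matrices $t^{(7)}_{\{2,1\}}\cdots t^{(10)}_{\{2,1\}}$. Concretely I would run a transposed version of Lemma~\ref{L-1st-lemma-in-S-SL2(A)} on the matrix just obtained (whose relevant entries are now $1+au$ in the corner and $\epsilon_0 ub$ adjacent) to push it to the form $\begin{pmatrix} a^r & \star \\ \star & \star\end{pmatrix}$ with the $(1,1)$-entry restored to $a^r = au+v$ — here the elementary matrix $t^{(6)}_{\{1,2\}}$ reintroduces the shift $v$ — and the $(1,2)$-entry equal to $\epsilon b$ where $\epsilon\in\bF_q^\times$ collects $\epsilon_0$ together with the unit $u\in\bF_q^\times$ coming from $p_r(a+d)$ being... — wait, $u$ need not be a unit, so instead the factor $u$ in $\epsilon_0 ub$ must be absorbed differently: I would choose the elementary parameters in the second application so that the $u$ is cleared by an elementary column operation $b \mapsto \epsilon b$ modulo the $(1,1)$-entry, exploiting that $\gcd(a^r, b\cdot(\text{stuff})) = 1$. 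Carrying this out carefully, and using Corollary~\ref{C-epsilon-matrix-times-G_r---in------S---basic-notions} or the fact that diagonal $\bF_q^\times$-matrices lie in $\cF_4(\A^4)$ and $\cM_\Lambda$ (Lemma~\ref{L--epsilon-matrix-in-G4(A^4)--cap--F4(A^4)----in---S----basic-notions}) to mop up stray units, produces the ten elementary factors $t^{(1)},\dots,t^{(10)}$ in the prescribed alternating pattern, one $\beta\in\cM_\Lambda$, and one $\gamma\in\cM_\Lambda^T$.

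\textbf{Main obstacle.} The hard part is bookkeeping: one must arrange the two successive applications of Lemma~\ref{L-1st-lemma-in-S-SL2(A)} so that (a) the total number of elementary matrices is exactly ten and they alternate in the exact pattern $t^{(1)}_{\{1,2\}}t^{(2)}_{\{2,1\}}t^{(3)}_{\{1,2\}}\,\beta\,t^{(4)}_{\{1,2\}}t^{(5)}_{\{2,1\}}t^{(6)}_{\{1,2\}}t^{(7)}_{\{2,1\}}\,\gamma\,t^{(8)}_{\{2,1\}}t^{(9)}_{\{1,2\}}t^{(10)}_{\{2,1\}}$ stated, (b) the leftover unit factors (the $u$ appearing in $p_r(a+d)b$, and the two $\epsilon$'s from the two residue-symbol arguments) consolidate into a single $\epsilon\in\bF_q^\times$ without spending extra matrix slots, and (c) the $(1,1)$-entry comes out exactly as $a^r$, not merely congruent to it modulo something. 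Point (b) is the genuinely delicate one: it forces a careful choice of which elementary operation clears the spurious $u$, and this is where the unimodularity of $(a^r, \text{adjacent entry})$ — a consequence of $\det\alpha^r=1$ — must be used, exactly paralleling the role of that observation in Vaserstein's argument.
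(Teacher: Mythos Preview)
Your proposal has a genuine gap: you never arrange for the hypothesis of Lemma~\ref{L-1st-lemma-in-S-SL2(A)} to hold. That lemma requires the top row to be literally of the form $(1 + a'u',\ b'u')$, i.e.\ the $(1,1)$-entry must be $1$ plus a multiple of the \emph{same} element $u'$ that divides the $(1,2)$-entry. Your top row is $(au+v,\ ub)$, and none of your proposed fixes work: right-multiplying by some $t_{\{1,2\}}$ alters the $(1,2)$-entry, not the $(1,1)$-entry, so you cannot ``clear $v$'' that way; and there is no reason for $v$ to be $1$.

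The missing idea is a factorization of $u$. Since $\det(\alpha^r)=1$ one has $v^2\equiv 1\pmod u$, so $u\mid (v-1)(v+1)$; write $u=u_1u_2$ with $v\equiv 1\pmod{u_1}$ and $v\equiv -1\pmod{u_2}$. Then $au+v = 1 + (v_1+u_2a)u_1$ and $ub=(u_2b)u_1$, which \emph{is} of the shape Lemma~\ref{L-1st-lemma-in-S-SL2(A)} needs with $u_1$ in the role of $u$. The first application of the lemma produces a bottom row $(\epsilon_1 u_2 b,\ v+ua)$ --- note only $u_2$, not all of $u$, survives in front of $b$. After conjugating by $\begin{pmatrix}0&1\\-1&0\end{pmatrix}$ the relevant corner is $-v-ua = 1 + (-v_2 - u_1 a)u_2$, again in the right shape, now with $u_2$ in the role of $u$; the second application then clears the remaining $u_2$, leaving $\epsilon_1\epsilon_2 b$. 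This two-stage clearing is exactly what resolves the issue you flagged (``$u$ need not be a unit''); your attempt to absorb $u$ by a column operation modulo the $(1,1)$-entry cannot work in general. Finally, $au+v\equiv a^r\pmod b$ lets a single extra $t_{\{2,1\}}$ convert $au+v$ into $a^r$, and the elementary-matrix count and alternation come out exactly as stated.
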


\begin{remark}

In the proof of Lemma \ref{L-2nd-L-in-S-SL2(A)} below, we follow the same arguments as that of Vaserstein \cite[Lemma {\bf1.2}]{Vaserstein}.

\end{remark}

\begin{proof}

By the Cayley--Hamilton theorem, we know that $\alpha$ satisfies its characteristic equation, that is,
\begin{align*}
\alpha^2 + f\alpha + {\bf1}_2 = 0,
\end{align*}
where $f = -\text{Trace}(\alpha)$, and ${\bf1}_2 = \begin{pmatrix} 1 & 0 \\ 0 & 1 \end{pmatrix}$. From the above equation, it is not difficult to prove that $\alpha^r$ can be written in the form
\begin{align}
\label{E-eqn-of-alpha^r-in--2nd-L--in-S-SL2(A)}
\alpha^r = u\alpha + v{\bf1}_2 = \begin{pmatrix} au + v & ub \\ cu  & du + v \end{pmatrix}
\end{align}
for some elements $u, v \in \A$. We see that $1 = \det(\alpha)^r= \det(\alpha^r) \equiv \det(v{\bf1}_2) = v^2 \pmod{u}$, and thus $u$ divides $(v - 1)(v + 1)$. Therefore there exist $u_1, u_2 \in \A$ such that $v \equiv 1 \pmod{u_1}$, $v \equiv - 1 \pmod{u_2}$, and $u = u_1u_2$.

Since $v \equiv 1 \pmod{u_1}$, there exists an element $v_1 \in \A$ such that $v = 1 + u_1v_1$. We see that 
\begin{align*}
v + ua = (1 + u_1v_1) + u_1u_2a = 1 + (v_1 + u_2a)u_1, 
\end{align*}
and $ub = (u_2b)u_1$. Applying Lemma \ref{L-1st-lemma-in-S-SL2(A)} with $\alpha^r, v_1 + u_2a, u_2b, u_1$ in the roles of $\alpha, a, b, u$, respectively, we see from (\ref{E-eqn-of-alpha^r-in--2nd-L--in-S-SL2(A)}) that there exist 
$t^{(1)}, t^{(2)}, t^{(3)}, t^{(4)}, w^{(1)} \in \A$, $\epsilon_1 \in \bF_q^{\times}$, and $\beta \in \cM_{\Lambda}$ such that
\begin{align}
\label{E-eqn-of-rho----in--2nd-L-in---S-SL2(A)}
\rho = \alpha^rt^{(1)}_{\{1, 2\}}t^{(2)}_{\{2, 1\}}t^{(3)}_{\{1, 2\}}\beta t^{(4)}_{\{1, 2\}}w^{(1)}_{\{2, 1\}} = \begin{pmatrix} * & * \\ \epsilon_1u_2b & v + ua \end{pmatrix}.
\end{align}

Set
\begin{align}
\label{E-eqn-of-chi---in---2nd-L-in---S--SL2(A)}
\chi := -\begin{pmatrix} 0 & 1 \\ -1 & 0 \end{pmatrix} \rho \begin{pmatrix} 0 & 1 \\ -1 & 0 \end{pmatrix}^{-1} = \begin{pmatrix} - v - ua & \epsilon_1u_2b \\ * & * \end{pmatrix} \in \SL_2(\A).
\end{align}
Since $v \equiv - 1 \pmod{u_2}$, we see that $v = -1 + u_2v_2$ for some $v_2 \in \A$, and thus 
\begin{align*}
-v - ua = 1 - u_2v_2 - u_1u_2a = 1 + (-v_2 - u_1a)u_2. 
\end{align*}
Applying Lemma \ref{L-1st-lemma-in-S-SL2(A)} with $\chi, -v_2 - u_1a, \epsilon_1 b, u_2$ in the roles of $\alpha, a, b, u$, we deduce that there exist $w^{(2)}, t^{(6)}, t^{(7)}, t^{(8)}, t^{(9)} \in \A$, $\epsilon_2 \in \bF_q^{\times}$, and $\beta_1 \in \cM_{\Lambda}$ such that
\begin{align*}
\chi w^{(2)}_{\{1, 2\}}(-t^{(6)})_{\{2, 1\}}(-t^{(7)})_{\{1, 2\}}\beta_1(-t^{(8)})_{\{1, 2\}}(-t^{(9)})_{\{2, 1\}} = \begin{pmatrix} * & * \\ \epsilon_1\epsilon_2b& -v - ua \end{pmatrix}.
 \end{align*}

Negating both sides of the above equation, and conjugating them by $\begin{pmatrix} 0 & 1 \\ -1 & 0 \end{pmatrix}^{-1}$, we get from Lemma \ref{L-eqn-of-alpha-after-conjugating-by-0--1--minus1---0} and (\ref{E-eqn-of-chi---in---2nd-L-in---S--SL2(A)}) that
\begin{align}
\label{E-eqn-of-zeta-in-terms-rho----in-2nd-L---in--S-SL2(A)}
\rho(-w^{(2)})_{\{2, 1\}}t^{(6)}_{\{1, 2\}}t^{(7)}_{\{2, 1\}}\gamma t^{(8)}_{\{2, 1\}}t^{(9)}_{\{1, 2\}} = \begin{pmatrix} v + ua & \epsilon b \\ * & * \end{pmatrix},
\end{align}
where $\epsilon = \epsilon_1\epsilon_2$, and 
\begin{align*}
\gamma = \begin{pmatrix} 0 & 1 \\ -1 & 0 \end{pmatrix}^{-1}\beta_1 \begin{pmatrix} 0 & 1 \\ -1 & 0 \end{pmatrix}.
\end{align*}
Note that since $\beta_1 \in \cM_{\Lambda}$, Lemma \ref{L-M_Lambda==conjugation-of-M_Lambda} implies that $\gamma \in \cM_{\Lambda}^T.$

We know that
\begin{align*}
\alpha = \begin{pmatrix} a & b \\ c & d \end{pmatrix} \equiv \begin{pmatrix} a & 0 \\ * & * \end{pmatrix} \pmod{b},
\end{align*}
and it thus follows from (\ref{E-eqn-of-alpha^r-in--2nd-L--in-S-SL2(A)}) that
\begin{align*}
\begin{pmatrix} au + v & ub \\ * & * \end{pmatrix} = u\alpha + v{\bf1}_2 = \alpha^r \equiv \begin{pmatrix} a^r & 0 \\ * & * \end{pmatrix} \pmod{b}. 
\end{align*}
Therefore $au + v \equiv a^r \pmod{b}$. Since $\epsilon \in \bF_q^{\times}$ is a unit in $\A$, there exists an element $t^{(10)} \in \A$ such that
\begin{align*}
a^r = au + v + t^{(10)}\epsilon b.
\end{align*}
Hence we deduce from (\ref{E-eqn-of-zeta-in-terms-rho----in-2nd-L---in--S-SL2(A)}) that
\begin{align}
\label{E-2nd-eqn-of-zeta-in-terms-of-rho----in--2nd-L-in---S-SL2(A)}
\rho(-w^{(2)})_{\{2, 1\}}t^{(6)}_{\{1, 2\}}t^{(7)}_{\{2, 1\}}\gamma t^{(8)}_{\{2, 1\}}t^{(9)}_{\{1, 2\}}t^{(10)}_{\{2, 1\}} = \begin{pmatrix} v + ua & \epsilon b \\ * & * \end{pmatrix}\begin{pmatrix} 1 & 0 \\ t^{(10)} & 1 \end{pmatrix} = \begin{pmatrix} a^r & \epsilon b \\ * & * \end{pmatrix}.
\end{align}

Set
\begin{align*}
t^{(5)} = w^{(1)} - w^{(2)} \in \A,
\end{align*}
and note that 
\begin{align*}
(t^{(5)})_{\{2, 1\}} = (w^{(1)})_{\{2, 1\}}(-w^{(2)})_{\{2, 1\}}.
\end{align*}
Hence Lemma \ref{L-2nd-L-in-S-SL2(A)} follows immediately from (\ref{E-eqn-of-rho----in--2nd-L-in---S-SL2(A)}) and (\ref{E-2nd-eqn-of-zeta-in-terms-of-rho----in--2nd-L-in---S-SL2(A)}).

\end{proof}

\begin{lemma}
\label{L-3rd-L-in---S-SL2(A)}

Let $\alpha = \begin{pmatrix} a & b \\ c & d \end{pmatrix} \in \SL_2(\A)$. Let $\epsilon \in \bF_q^{\times}$, and let $r$ be a positive integer. Assume that 
\begin{align*}
a^r \equiv \epsilon \pmod{b}.
\end{align*}
Then there exist $t^{(1)}, t^{(2)}, \ldots, t^{(12)} \in \A$, $\beta \in \cM_{\Lambda}$, and $\gamma \in \cM_{\Lambda}^T$ such that
\begin{align*}
\alpha^rt^{(1)}_{\{1, 2\}}t^{(2)}_{\{2, 1\}}t^{(3)}_{\{1, 2\}} \beta t^{(4)}_{\{1, 2\}}t^{(5)}_{\{2, 1\}}t^{(6)}_{\{1, 2\}}t^{(7)}_{\{2, 1\}}\gamma t^{(8)}_{\{2, 1\}}t^{(9)}_{\{1, 2\}}t^{(10)}_{\{2, 1\}}t^{(11)}_{\{1, 2\}}t^{(12)}_{\{2, 1\}} = \begin{pmatrix} \epsilon & 0 \\ 0 & \epsilon^{-1} \end{pmatrix}.
\end{align*}

\end{lemma}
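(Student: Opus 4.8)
The plan is to use Lemma~\ref{L-2nd-L-in-S-SL2(A)} as the main input, together with the hypothesis $a^r \equiv \epsilon \pmod{b}$, to first reduce $\alpha^r$ (up to right-multiplication by a product of matrices drawn from polynomial families) to a matrix of the form $\begin{pmatrix} \epsilon & 0 \\ * & \epsilon^{-1} \end{pmatrix}$, and then to clean up the lower-left entry using elementary matrices. First I would apply Lemma~\ref{L-2nd-L-in-S-SL2(A)} verbatim: there exist $t^{(1)},\ldots,t^{(10)} \in \A$, a unit $\epsilon' \in \bF_q^\times$, $\beta \in \cM_\Lambda$, and $\gamma \in \cM_\Lambda^T$ with
\begin{align*}
\alpha^r t^{(1)}_{\{1,2\}}t^{(2)}_{\{2,1\}}t^{(3)}_{\{1,2\}}\beta t^{(4)}_{\{1,2\}}t^{(5)}_{\{2,1\}}t^{(6)}_{\{1,2\}}t^{(7)}_{\{2,1\}}\gamma t^{(8)}_{\{2,1\}}t^{(9)}_{\{1,2\}}t^{(10)}_{\{2,1\}} = \begin{pmatrix} a^r & \epsilon' b \\ * & * \end{pmatrix}.
\end{align*}
The key observation is that the hypothesis $a^r \equiv \epsilon \pmod{b}$ lets me write $a^r = \epsilon + t\,(\epsilon' b)$ for some $t \in \A$ (here I use that $\epsilon'$ is a unit, so $\epsilon' b$ generates the same ideal as $b$), hence right-multiplying the displayed matrix by $(-t)_{\{1,2\}}$ sends the $(1,1)$ entry to $\epsilon$ while leaving the $(1,2)$ entry equal to $\epsilon' b$. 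Actually, I realize it is cleaner to eliminate the $(1,2)$ entry first and then fix the $(1,1)$ entry: the matrix $\begin{pmatrix} a^r & \epsilon' b \\ * & * \end{pmatrix}$ has determinant $1$; after I make the $(1,1)$ entry equal to $\epsilon$ via a column operation, the relation $\det = 1$ forces the $(1,2)$ entry to still be a multiple of $b$, and I can then use one more elementary matrix on the right to clear it, landing on $\begin{pmatrix} \epsilon & 0 \\ c' & \epsilon^{-1} \end{pmatrix}$ for some $c' \in \A$.

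Next I would clear the lower-left entry. The matrix $\begin{pmatrix} \epsilon & 0 \\ c' & \epsilon^{-1} \end{pmatrix}$ equals $\begin{pmatrix} \epsilon & 0 \\ 0 & \epsilon^{-1} \end{pmatrix}\begin{pmatrix} 1 & 0 \\ \epsilon c' & 1 \end{pmatrix}$, so right-multiplying by $(-\epsilon c')_{\{2,1\}}$ produces exactly the diagonal matrix $\begin{pmatrix} \epsilon & 0 \\ 0 & \epsilon^{-1} \end{pmatrix}$. Collecting all the right factors, I will have used: the ten factors $t^{(1)}_{\{1,2\}},\ldots,t^{(10)}_{\{2,1\}}$ from Lemma~\ref{L-2nd-L-in-S-SL2(A)}, one $\beta \in \cM_\Lambda$, one $\gamma \in \cM_\Lambda^T$, and then two or three further elementary matrices (one to fix the $(1,1)$ entry, one to clear the new $(1,2)$ entry, one to clear the $(2,1)$ entry). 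This gives a total of twelve new elementary parameters $t^{(1)},\ldots,t^{(12)}$, after merging any consecutive elementary matrices of the same type (e.g.\ two consecutive $(\cdot)_{\{1,2\}}$ factors combine into a single one via additivity $x_{\{1,2\}}y_{\{1,2\}} = (x+y)_{\{1,2\}}$, and likewise for the $\{2,1\}$ type). Careful bookkeeping of which factors are of type $\{1,2\}$ versus $\{2,1\}$, and merging adjacent same-type factors, is what makes the index pattern in the statement come out as written: $t^{(1)}_{\{1,2\}}t^{(2)}_{\{2,1\}}t^{(3)}_{\{1,2\}}$, then $\beta$, then $t^{(4)}_{\{1,2\}}t^{(5)}_{\{2,1\}}t^{(6)}_{\{1,2\}}t^{(7)}_{\{2,1\}}$, then $\gamma$, then $t^{(8)}_{\{2,1\}}t^{(9)}_{\{1,2\}}t^{(10)}_{\{2,1\}}t^{(11)}_{\{1,2\}}t^{(12)}_{\{2,1\}}$.

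The main obstacle I anticipate is purely organizational rather than conceptual: I must track the parities (type $\{1,2\}$ vs.\ $\{2,1\}$) of the trailing elementary matrices so that the three or four new ones I introduce after the $\gamma$ factor merge with the existing $t^{(8)}_{\{2,1\}}t^{(9)}_{\{1,2\}}t^{(10)}_{\{2,1\}}$ tail to yield precisely the five-factor tail $t^{(8)}_{\{2,1\}}t^{(9)}_{\{1,2\}}t^{(10)}_{\{2,1\}}t^{(11)}_{\{1,2\}}t^{(12)}_{\{2,1\}}$ appearing in the statement. Concretely, after Lemma~\ref{L-2nd-L-in-S-SL2(A)} the tail ends in a $\{2,1\}$ factor ($t^{(10)}_{\{2,1\}}$); the column operation fixing the $(1,1)$ entry is of type $\{1,2\}$, the one clearing the resulting $(1,2)$ entry is again of type $\{1,2\}$ (these two merge), and the final one clearing the $(2,1)$ entry is of type $\{2,1\}$; so the appended block is $(\cdot)_{\{1,2\}}(\cdot)_{\{2,1\}}$, giving the tail the pattern $\{2,1\},\{1,2\},\{2,1\}$ which already matches, and after relabeling and inserting the extra merged parameters we reach the twelve-parameter form in the statement. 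Everything else is a routine determinant bookkeeping argument using $\det = 1$ and the identity $x_{\{i,j\}}y_{\{i,j\}} = (x+y)_{\{i,j\}}$, so no genuinely new idea beyond Lemma~\ref{L-2nd-L-in-S-SL2(A)} is required.
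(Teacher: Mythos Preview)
Your approach is the same as the paper's: apply Lemma~\ref{L-2nd-L-in-S-SL2(A)}, then use the congruence $a^r\equiv\epsilon\pmod b$ to adjust the $(1,1)$ entry to $\epsilon$, clear the $(1,2)$ entry, and finally clear the $(2,1)$ entry, merging one of the new elementary factors with the existing tail.

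There is, however, a persistent slip in your column-operation bookkeeping. Right-multiplying by $m_{\{1,2\}}=\begin{pmatrix}1&m\\0&1\end{pmatrix}$ adds $m$ times the first column to the second (it changes the $(1,2)$ entry, not the $(1,1)$ entry), while right-multiplying by $m_{\{2,1\}}=\begin{pmatrix}1&0\\m&1\end{pmatrix}$ adds $m$ times the second column to the first. Hence the step that turns $a^r$ into $\epsilon$ is of type $\{2,1\}$, not $\{1,2\}$: choose $w^{(2)}$ with $a^r+(\epsilon' b)\,w^{(2)}=\epsilon$ and right-multiply by $w^{(2)}_{\{2,1\}}$. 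It is this $\{2,1\}$ factor that merges with the trailing $t^{(10)}_{\{2,1\}}$ from Lemma~\ref{L-2nd-L-in-S-SL2(A)} (the paper writes $t^{(10)}=w^{(1)}+w^{(2)}$). The subsequent two operations are then $t^{(11)}_{\{1,2\}}$ (clear the $(1,2)$ entry, using that $\epsilon$ is a unit) and $t^{(12)}_{\{2,1\}}$ (clear the $(2,1)$ entry). With this correction your count and the alternation pattern $\{2,1\},\{1,2\},\{2,1\},\{1,2\},\{2,1\}$ in the tail after $\gamma$ come out exactly as in the statement, and your argument coincides with the paper's.
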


\begin{proof}

By Lemma \ref{L-2nd-L-in-S-SL2(A)}, there exist elements $\epsilon_1 \in \bF_q^{\times}$, $t^{(1)}, t^{(2)}, \ldots, t^{(9)} \in \A$, $w^{(1)} \in \A$, $\beta \in \cM_{\Lambda}$, and $\gamma \in \cM_{\Lambda}^T$ such that
\begin{align}
\label{E-eqn--of--rho--in--3rd---L-------in-S-SL2(A)}
\rho := \alpha^r t^{(1)}_{\{1, 2\}}t^{(2)}_{\{2, 1\}}t^{(3)}_{\{1, 2\}} \beta t^{(4)}_{\{1, 2\}}t^{(5)}_{\{2, 1\}}t^{(6)}_{\{1, 2\}}t^{(7)}_{\{2, 1\}} \gamma t^{(8)}_{\{2, 1\}}t^{(9)}_{\{1, 2\}}w^{(1)}_{\{2, 1\}} = \begin{pmatrix} a^r & \epsilon_1 b \\ * & * \end{pmatrix}.
\end{align}

By assumption, we know that $a^r \equiv \epsilon \pmod{b}$. Since $\epsilon_1 \in \bF_q^{\times}$ is a unit in $\A$, there exists an element $w^{(2)} \in \A$ such that
\begin{align*}
a^r + \epsilon_1b w^{(2)} = \epsilon,
\end{align*}
and thus  
\begin{align}
\label{E-2nd-eqn-of--rho---in--3rd---L--------in--S---SL2(A)}
\rho (w^{(2)})_{\{2, 1\}} =  \begin{pmatrix} a^r & \epsilon_1 b \\ * & * \end{pmatrix}\begin{pmatrix} 1 & 0 \\ w^{(2)} & 1 \end{pmatrix} = \begin{pmatrix} a^r + \epsilon_1bw^{(2)} & \epsilon_1b \\ * & * \end{pmatrix} = \begin{pmatrix} \epsilon & \epsilon_1b \\ * & * \end{pmatrix}.
\end{align}

Set
\begin{align*}
t^{(11)} = -\dfrac{\epsilon_1b}{\epsilon}.
\end{align*}
Since $\epsilon \in \bF_q^{\times}$ is a unit in $\A$, we get that $t^{(11)} \in \A$. We see from (\ref{E-2nd-eqn-of--rho---in--3rd---L--------in--S---SL2(A)}) that
\begin{align}
\label{E-3rd--eqn---of----rho----in-3rd---L-----in-S-SL2(A)}
\rho (w^{(2)})_{\{2, 1\}}(t^{(11)})_{\{1, 2\}} =\begin{pmatrix} \epsilon & \epsilon_1b \\ * & * \end{pmatrix}t^{(11)}_{\{1, 2\}} = \begin{pmatrix} \epsilon & \epsilon_1b \\ * & * \end{pmatrix}\begin{pmatrix} 1 & t^{(11)}\\ 0 & 1 \end{pmatrix} = \begin{pmatrix} \epsilon & \epsilon t^{(11)} +  \epsilon_1b \\ * & * \end{pmatrix} = \begin{pmatrix} \epsilon & 0 \\ m & n \end{pmatrix},
\end{align}
where $m, n$ are certain elements in $\A$. 

By (\ref{E-eqn--of--rho--in--3rd---L-------in-S-SL2(A)}), we know that $\det(\rho) = 1$, and thus
\begin{align*}
\epsilon n = \det\begin{pmatrix} \epsilon & 0 \\ m & n \end{pmatrix} = \det(\rho w^{(2)}_{\{2, 1\}}) = 1,
\end{align*}
and therefore $n = \epsilon^{-1}$. Hence (\ref{E-3rd--eqn---of----rho----in-3rd---L-----in-S-SL2(A)}) implies that
\begin{align}
\label{E-4th--eqn---of----rho----in---3rd---L-----in----S--SL2(A)}
\rho w^{(2)}_{\{2, 1\}} t^{(11)}_{\{1, 2\}} = \begin{pmatrix} \epsilon & 0 \\ m & \epsilon^{-1} \end{pmatrix}.
\end{align}

Set 
\begin{align*}
t^{(12)} = - \epsilon m \in \A.
\end{align*}
An easy calculation now shows that 
\begin{align}
\label{E-5th-----eqn---of-------rho----in---3rd---L-----in----S--SL2(A)}
\rho w^{(2)}_{\{2, 1\}} t^{(11)}_{\{1, 2\}}t^{(12)}_{\{2, 1\}} = \begin{pmatrix} \epsilon & 0 \\ m & \epsilon^{-1} \end{pmatrix} \begin{pmatrix} 1 & 0 \\ t^{(12)} & 1 \end{pmatrix} =\begin{pmatrix} \epsilon & 0 \\ 0 & \epsilon^{-1} \end{pmatrix}.
\end{align}
Setting
\begin{align*}
t^{(10)} = w^{(1)} + w^{(2)},
\end{align*}
we see that Lemma \ref{L-3rd-L-in---S-SL2(A)} follows immediately from (\ref{E-eqn--of--rho--in--3rd---L-------in-S-SL2(A)}) and (\ref{E-5th-----eqn---of-------rho----in---3rd---L-----in----S--SL2(A)}).

\end{proof}

\begin{corollary}
\label{C-1st-corollary--about---representation-of-alpha^r---in---S---SL2(A)}

Let $\alpha = \begin{pmatrix} a & b \\ c & d \end{pmatrix} \in \SL_2(\A)$. Let $\epsilon \in \bF_q^{\times}$, and let $r$ be a positive integer. Assume that 
\begin{align*}
a^r \equiv \epsilon \pmod{b}.
\end{align*}
Then
\begin{align*}
\alpha^r = \begin{pmatrix} \epsilon & 0 \\ 0 & \epsilon^{-1} \end{pmatrix}\chi_5 \gamma_{\Lambda} \chi_4 \beta_{\Lambda} \chi_3,
\end{align*}
where $\chi_3 \in \cF_3(\A^3), \chi_4 \in \cG_4(\A^4), \chi_5 \in \cG_5(\A^5)$, $\gamma_{\Lambda} \in \cM_{\Lambda}^T$, and $\beta_{\Lambda} \in \cM_{\Lambda}$.

\end{corollary}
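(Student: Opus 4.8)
The plan is to invert the identity furnished by Lemma \ref{L-3rd-L-in---S-SL2(A)} and then regroup the resulting product into the five advertised factors, using only the bookkeeping results of Section \ref{S-basic-notions}. The hypothesis $a^r \equiv \epsilon \pmod{b}$ is exactly the hypothesis of Lemma \ref{L-3rd-L-in---S-SL2(A)}, so that lemma produces elements $t^{(1)}, \ldots, t^{(12)} \in \A$, $\beta \in \cM_{\Lambda}$, and $\gamma \in \cM_{\Lambda}^T$ with
\begin{align*}
\alpha^r t^{(1)}_{\{1, 2\}}t^{(2)}_{\{2, 1\}}t^{(3)}_{\{1, 2\}} \beta t^{(4)}_{\{1, 2\}}t^{(5)}_{\{2, 1\}}t^{(6)}_{\{1, 2\}}t^{(7)}_{\{2, 1\}}\gamma t^{(8)}_{\{2, 1\}}t^{(9)}_{\{1, 2\}}t^{(10)}_{\{2, 1\}}t^{(11)}_{\{1, 2\}}t^{(12)}_{\{2, 1\}} = \begin{pmatrix} \epsilon & 0 \\ 0 & \epsilon^{-1} \end{pmatrix}.
\end{align*}
First I would solve this equation for $\alpha^r$, writing $\alpha^r = \begin{pmatrix} \epsilon & 0 \\ 0 & \epsilon^{-1} \end{pmatrix}\Pi^{-1}$, where $\Pi$ denotes the long product of elementary matrices, $\beta$, and $\gamma$ appearing on the left-hand side above.

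Next I would expand $\Pi^{-1}$ factor by factor in the reverse order, using $(m_{\{1, 2\}})^{-1} = (-m)_{\{1, 2\}}$, $(m_{\{2, 1\}})^{-1} = (-m)_{\{2, 1\}}$, together with Lemma \ref{L-M_Lambda==conjugation-of-M_Lambda}$(i)$, which gives $\beta^{-1} \in \cM_{\Lambda}^{-1} = \cM_{\Lambda}$ and $\gamma^{-1} \in \cM_{\Lambda}^{-1, T} = \cM_{\Lambda}^T$. Reversing the order turns the block $t^{(8)}_{\{2, 1\}}t^{(9)}_{\{1, 2\}}t^{(10)}_{\{2, 1\}}t^{(11)}_{\{1, 2\}}t^{(12)}_{\{2, 1\}}$ into $(-t^{(12)})_{\{2, 1\}}(-t^{(11)})_{\{1, 2\}}(-t^{(10)})_{\{2, 1\}}(-t^{(9)})_{\{1, 2\}}(-t^{(8)})_{\{2, 1\}}$, which by Lemma \ref{L-explicit-eqn-of-G_r----in----S---basic-notions}$(ii)$ is precisely $\cG_5(t^{(12)}, t^{(11)}, t^{(10)}, t^{(9)}, t^{(8)}) \in \cG_5(\A^5)$; call it $\chi_5$. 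Likewise the block $t^{(4)}_{\{1, 2\}}t^{(5)}_{\{2, 1\}}t^{(6)}_{\{1, 2\}}t^{(7)}_{\{2, 1\}}$ reverses to an element $\chi_4 = \cG_4(t^{(7)}, t^{(6)}, t^{(5)}, t^{(4)}) \in \cG_4(\A^4)$ by Lemma \ref{L-explicit-eqn-of-G_r----in----S---basic-notions}$(i)$, and the block $t^{(1)}_{\{1, 2\}}t^{(2)}_{\{2, 1\}}t^{(3)}_{\{1, 2\}}$ reverses to $\chi_3 = \cF_3(-t^{(3)}, -t^{(2)}, -t^{(1)}) \in \cF_3(\A^3)$ straight from the definition of $\cF_3$. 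Writing $\gamma_{\Lambda} = \gamma^{-1} \in \cM_{\Lambda}^T$ and $\beta_{\Lambda} = \beta^{-1} \in \cM_{\Lambda}$, the expansion of $\Pi^{-1}$ reads $\chi_5 \gamma_{\Lambda} \chi_4 \beta_{\Lambda} \chi_3$, and hence $\alpha^r = \begin{pmatrix} \epsilon & 0 \\ 0 & \epsilon^{-1} \end{pmatrix}\chi_5 \gamma_{\Lambda} \chi_4 \beta_{\Lambda} \chi_3$, which is the claimed factorization.

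No genuine obstacle arises here: the argument is a purely formal rearrangement of the conclusion of Lemma \ref{L-3rd-L-in---S-SL2(A)}. The only point demanding care is that passing to the inverse both reverses the order of the factors and simultaneously replaces each $t^{(i)}$ by $-t^{(i)}$, so one must check that the three resulting alternating blocks, of lengths $5$, $4$, and $3$, line up with the sign conventions hard-coded into the definitions of $\cG_5$, $\cG_4$ (via Lemma \ref{L-explicit-eqn-of-G_r----in----S---basic-notions}) and $\cF_3$; and one must invoke Lemma \ref{L-M_Lambda==conjugation-of-M_Lambda}$(i)$ to keep $\beta^{-1}$ inside $\cM_{\Lambda}$ and $\gamma^{-1}$ inside $\cM_{\Lambda}^T$.
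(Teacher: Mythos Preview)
Your proposal is correct and follows essentially the same approach as the paper: both invoke Lemma \ref{L-3rd-L-in---S-SL2(A)}, invert the resulting product, and identify the three reversed elementary-matrix blocks with $\cG_5$, $\cG_4$, $\cF_3$ via Lemma \ref{L-explicit-eqn-of-G_r----in----S---basic-notions}, while using Lemma \ref{L-M_Lambda==conjugation-of-M_Lambda}$(i)$ to keep $\beta^{-1}\in\cM_\Lambda$ and $\gamma^{-1}\in\cM_\Lambda^T$. Your explicit parameters for $\chi_4$ and $\chi_3$ are a harmless elaboration of what the paper leaves implicit.
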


\begin{proof}

By Lemma \ref{L-3rd-L-in---S-SL2(A)}, there exist $t^{(1)}, t^{(2)}, \ldots, t^{(12)} \in \A$, $\beta \in \cM_{\Lambda}$, and $\gamma \in \cM_{\Lambda}^T$ such that
\begin{align}
\label{E-1st-eqn-alpha^r-----in----1st--C----in----S----SL2(A)}
\alpha^rt^{(1)}_{\{1, 2\}}t^{(2)}_{\{2, 1\}}t^{(3)}_{\{1, 2\}} \beta t^{(4)}_{\{1, 2\}}t^{(5)}_{\{2, 1\}}t^{(6)}_{\{1, 2\}}t^{(7)}_{\{2, 1\}}\gamma t^{(8)}_{\{2, 1\}}t^{(9)}_{\{1, 2\}}t^{(10)}_{\{2, 1\}}t^{(11)}_{\{1, 2\}}t^{(12)}_{\{2, 1\}} = \begin{pmatrix} \epsilon & 0 \\ 0 & \epsilon^{-1} \end{pmatrix}.
\end{align}
We see that
\begin{align*}
\chi_5 = (t^{(8)}_{\{2, 1\}}t^{(9)}_{\{1, 2\}}t^{(10)}_{\{2, 1\}}t^{(11)}_{\{1, 2\}}t^{(12)}_{\{2, 1\}})^{-1} = (-t^{(12)}_{\{2, 1\}})(-t^{(11)})_{\{1, 2\}}(-t^{(10)})_{\{2, 1\}}(-t^{(9)})_{\{1, 2\}}(-t^{(8)})_{\{2, 1\}},
\end{align*}
and hence 
\begin{align}
\label{E-eqn-of-chi5--in---1st--C---in---S---SL2(A)}
\chi_5 = \cG_5(t^{(12)}, t^{(11)}, t^{(10)}, t^{(9)}, t^{(8)}) \in \cG_5(\A^5).
\end{align}

Similarly we see that
\begin{align}
\label{E-eqn----of----chi4--in---1st--C---in---S---SL2(A)}
\chi_4 = (t^{(4)}_{\{1, 2\}}t^{(5)}_{\{2, 1\}}t^{(6)}_{\{1, 2\}}t^{(7)}_{\{2, 1\}})^{-1} \in \cG_4(\A^4),
\end{align}
and
\begin{align}
\label{E-eqn----of------chi3--in---1st--C---in---S---SL2(A)}
\chi_3 = (t^{(1)}_{\{1, 2\}}t^{(2)}_{\{2, 1\}}t^{(3)}_{\{1, 2\}})^{-1} \in \cF_3(\A^3).
\end{align}

On the other hand, Lemma \ref{L-M_Lambda==conjugation-of-M_Lambda} implies that $\gamma_{\Lambda} = \gamma^{-1} \in \cM_{\Lambda}^T$, and $\beta_{\Lambda} = \beta^{-1} \in \cM_{\Lambda}$. It thus follows from (\ref{E-1st-eqn-alpha^r-----in----1st--C----in----S----SL2(A)}), (\ref{E-eqn-of-chi5--in---1st--C---in---S---SL2(A)}), (\ref{E-eqn----of----chi4--in---1st--C---in---S---SL2(A)}), and (\ref{E-eqn----of------chi3--in---1st--C---in---S---SL2(A)}) that
\begin{align*}
\alpha^r = \begin{pmatrix} \epsilon & 0 \\ 0 & \epsilon^{-1} \end{pmatrix}\chi_5 \gamma_{\Lambda} \chi_4 \beta_{\Lambda} \chi_3,
\end{align*}
where $\chi_3 \in \cF_3(\A^3), \chi_4 \in \cG_4(\A^4), \chi_5 \in \cG_5(\A^5)$, $\gamma_{\Lambda} \in \cM_{\Lambda}^T$, and $\beta_{\Lambda} \in \cM_{\Lambda}$ as desired.

\end{proof}

\begin{corollary}
\label{C---2nd----C----about---representation-of-alpha---in---S--SL2(A)}

Let $\alpha = \begin{pmatrix} a & b \\ c & d \end{pmatrix} \in \SL_2(\A)$. Assume that there exist relatively prime integers $r, s \ge 1$ such that $a^r \equiv \epsilon_1 \pmod{b}$ and $a^s \equiv \epsilon_2 \pmod{c}$ for some units $\epsilon_1, \epsilon_2 \in \bF_q^{\times}$. Then there exist $\chi_3 \in \cF_3(\A^3)$, $\chi_4 \in \cG_4(\A^4)$, $\chi_9 \in \cG_9(\A^9)$, $\chi_3^{\heartsuit} \in \cG_3(\A^3)$, $\chi_4^{\heartsuit} \in \cF_4(\A^4)$, $\chi_9^{\heartsuit} \in \cF_9(\A^9)$, $\gamma_{\Lambda}^{\heartsuit}, \beta_{\Lambda} \in \cM_{\Lambda}$, and $\gamma_{\Lambda}, \beta_{\Lambda}^{\heartsuit} \in \cM_{\Lambda}^T$ such that
\begin{align*}
\alpha =  \chi_9\gamma_{\Lambda} \chi_4 \beta_{\Lambda} \chi_3\chi_9^{\heartsuit} \gamma_{\Lambda}^{\heartsuit} \chi_4^{\heartsuit} \beta_{\Lambda}^{\heartsuit} \chi_3^{\heartsuit}.
\end{align*}

\end{corollary}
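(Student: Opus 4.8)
The plan is to apply Corollary \ref{C-1st-corollary--about---representation-of-alpha^r---in---S---SL2(A)} twice: once directly to $\alpha$ using the hypothesis on $b$, and once to the transpose $\alpha^T$ using the hypothesis on $c$. First I would invoke Corollary \ref{C-1st-corollary--about---representation-of-alpha^r---in---S---SL2(A)} with exponent $r$ to write
\begin{align*}
\alpha^r = \begin{pmatrix} \epsilon_1 & 0 \\ 0 & \epsilon_1^{-1} \end{pmatrix}\chi_5 \gamma_{\Lambda} \chi_4 \beta_{\Lambda} \chi_3,
\end{align*}
with $\chi_3 \in \cF_3(\A^3)$, $\chi_4 \in \cG_4(\A^4)$, $\chi_5 \in \cG_5(\A^5)$, $\gamma_{\Lambda} \in \cM_{\Lambda}^T$, $\beta_{\Lambda} \in \cM_{\Lambda}$. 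By Lemma \ref{L--epsilon-matrix-in-G4(A^4)--cap--F4(A^4)----in---S----basic-notions}(i) and Lemma \ref{L-relations----between-G_r------in---------S-basic-notions}, the leading diagonal unit times $\chi_5$ can be absorbed: $\begin{pmatrix} \epsilon_1 & 0 \\ 0 & \epsilon_1^{-1} \end{pmatrix}\chi_5 \in \cG_9(\A^9)$ (using Corollary \ref{C-epsilon-matrix-times-G_r---in------S---basic-notions}(i)), so $\alpha^r = \chi_9 \gamma_{\Lambda}\chi_4\beta_{\Lambda}\chi_3$ with $\chi_9 \in \cG_9(\A^9)$.

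Next, since $\alpha^T = \begin{pmatrix} a & c \\ b & d\end{pmatrix}$ has top-left entry $a$ and the hypothesis gives $a^s \equiv \epsilon_2 \pmod{c}$, I would apply Corollary \ref{C-1st-corollary--about---representation-of-alpha^r---in---S---SL2(A)} to $\alpha^T$ with exponent $s$, obtaining
\begin{align*}
(\alpha^T)^s = \begin{pmatrix} \epsilon_2 & 0 \\ 0 & \epsilon_2^{-1} \end{pmatrix}\chi_5' \gamma_{\Lambda}' \chi_4' \beta_{\Lambda}' \chi_3',
\end{align*}
and absorb the diagonal unit as before so that $(\alpha^T)^s = \chi_9' \gamma_{\Lambda}'\chi_4'\beta_{\Lambda}'\chi_3'$ with $\chi_9' \in \cG_9(\A^9)$, $\chi_4' \in \cG_4(\A^4)$, $\chi_3' \in \cF_3(\A^3)$, $\gamma_{\Lambda}' \in \cM_{\Lambda}^T$, $\beta_{\Lambda}' \in \cM_{\Lambda}$. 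Now I would conjugate by $\begin{pmatrix} 0 & 1 \\ -1 & 0\end{pmatrix}$: by Lemma \ref{L-eqn-of-alpha-after-conjugating-by-0--1--minus1---0}, conjugating $(\alpha^T)^s$ yields $((\alpha^T)^{-1})^{T,s}$; more simply, $\begin{pmatrix} 0 & 1 \\ -1 & 0\end{pmatrix}\alpha^T\begin{pmatrix} 0 & 1 \\ -1 & 0\end{pmatrix}^{-1} = (\alpha^{-1})$ so conjugating the $s$-th power of $\alpha^T$ gives $\alpha^{-s}$. Applying this conjugation termwise and using Lemma \ref{L-explicit-eqn-of-G_r----in----S---basic-notions} together with Lemma \ref{L-M_Lambda==conjugation-of-M_Lambda} converts $\cG$-factors into $\cF$-factors and $\cM_\Lambda^T$ into $\cM_\Lambda$ (and vice versa), so that $\alpha^{-s} = \chi_9^{\heartsuit}\gamma_{\Lambda}^{\heartsuit}\chi_4^{\heartsuit}\beta_{\Lambda}^{\heartsuit}\chi_3^{\heartsuit}$ with $\chi_9^{\heartsuit} \in \cF_9(\A^9)$, $\chi_4^{\heartsuit} \in \cF_4(\A^4)$, $\chi_3^{\heartsuit} \in \cG_3(\A^3)$, $\gamma_{\Lambda}^{\heartsuit} \in \cM_{\Lambda}$, $\beta_{\Lambda}^{\heartsuit} \in \cM_{\Lambda}^T$.

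Finally, since $\gcd(r,s) = 1$, I can choose positive integers $r, s$ (replacing them by suitable representatives of the arithmetic progressions on which the congruences persist — note $a^r \equiv \epsilon_1 \pmod b$ forces $a^{r + k(q-1)} \equiv \epsilon_1 \pmod b$ for all $k \ge 0$ once we also track the order of $\epsilon_1$, so one can arrange $r - s = 1$ while preserving both congruence hypotheses) so that $r = s + 1$. Then
\begin{align*}
\alpha = \alpha^{r}\alpha^{-s} = \chi_9\gamma_{\Lambda}\chi_4\beta_{\Lambda}\chi_3 \cdot \chi_9^{\heartsuit}\gamma_{\Lambda}^{\heartsuit}\chi_4^{\heartsuit}\beta_{\Lambda}^{\heartsuit}\chi_3^{\heartsuit},
\end{align*}
which is exactly the claimed factorization. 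The main obstacle I anticipate is the bookkeeping needed to arrange $r - s = 1$ while keeping both $a^r \equiv \epsilon_1 \pmod b$ and $a^s \equiv \epsilon_2 \pmod c$ valid: one must pass from the given pair $(r,s)$ to an adjusted pair, which requires controlling the multiplicative orders of $a$ modulo $b$ and modulo $c$ and the units $\epsilon_1, \epsilon_2$; if $\epsilon_1, \epsilon_2$ are allowed to change along the progression, this is routine, but if they must be fixed one needs a CRT-type argument using $\gcd(r,s)=1$ to find a common solution, and care is needed that the resulting exponents are still positive. The matrix manipulations (absorbing the diagonal units into $\cG_9$ or $\cF_9$, transposing and conjugating the factored forms) are routine applications of the lemmas in Section \ref{S-basic-notions}.
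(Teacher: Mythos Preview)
Your overall strategy matches the paper's proof exactly: apply Corollary~\ref{C-1st-corollary--about---representation-of-alpha^r---in---S---SL2(A)} to $\alpha$ with exponent $r$, apply it to $\alpha^T$ with exponent $s$, conjugate the latter by $\begin{pmatrix}0&1\\-1&0\end{pmatrix}$ to obtain a factorization of $\alpha^{-s}$, absorb the diagonal units via Corollary~\ref{C-epsilon-matrix-times-G_r---in------S---basic-notions}, and multiply $\alpha^r\alpha^{-s}=\alpha$.

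The one genuine weak spot is your reduction to $r-s=1$. The claim that $a^r\equiv\epsilon_1\pmod b$ implies $a^{r+k(q-1)}\equiv\epsilon_1\pmod b$ is false in general: the unit group $(\A/b)^\times$ has order $q^{\deg b}-1$ when $b$ is irreducible, not $q-1$, so $a^{q-1}$ need not be $1$ modulo $b$. No CRT or order-tracking argument is needed. The paper's fix is the obvious one you half-allude to when you say the units ``are allowed to change'': since $\gcd(r,s)=1$, B\'ezout gives positive integers $h_1,h_2$ with $rh_1-sh_2=1$; replace $r$ by $rh_1$ and $s$ by $sh_2$, and observe $a^{rh_1}\equiv\epsilon_1^{h_1}\pmod b$ and $a^{sh_2}\equiv\epsilon_2^{h_2}\pmod c$, with $\epsilon_1^{h_1},\epsilon_2^{h_2}\in\bF_q^\times$ still units. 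This adjustment should be made \emph{before} invoking Corollary~\ref{C-1st-corollary--about---representation-of-alpha^r---in---S---SL2(A)}, not after, since the factorizations of $\alpha^r$ and $(\alpha^T)^s$ depend on the chosen exponents. With that correction and reordering, your argument is complete and coincides with the paper's.
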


\begin{proof}

Since $r, s$ are relatively prime, one can find positive integers $h_1, h_2$ such that $sh_2 = rh_1 - 1$. By replacing $r, s$ by $rh_1, sh_2$, respectively, one can, without loss of generality, assume that $s = r - 1$.  

Applying Corollary \ref{C-1st-corollary--about---representation-of-alpha^r---in---S---SL2(A)}, one can write
\begin{align}
\label{E-1st-eqn-of-alpha^r---in---2nd---C---in---S---SL2(A)}
\alpha^r = \begin{pmatrix} \epsilon_1 & 0 \\ 0 & \epsilon_1^{-1} \end{pmatrix}\chi_5^{\#} \gamma_{\Lambda} \chi_4 \beta_{\Lambda} \chi_3,
\end{align}
where $\chi_3 \in \cF_3(\A^3), \chi_4 \in \cG_4(\A^4), \chi_5^{\#} \in \cG_5(\A^5)$, $\gamma_{\Lambda} \in \cM_{\Lambda}^T$, and $\beta_{\Lambda} \in \cM_{\Lambda}$.

Applying Corollary \ref{C-1st-corollary--about---representation-of-alpha^r---in---S---SL2(A)} with $\alpha^T$ in the role of $\alpha$, one can write
\begin{align}
\label{E-2nd----eqn---of-alpha^s---in---2nd---C---in---S---SL2(A)}
(\alpha^T)^s = \begin{pmatrix} \epsilon_2 & 0 \\ 0 & \epsilon_2^{-1} \end{pmatrix}\chi_5^* \gamma_{\Lambda}^* \chi_4^* \beta_{\Lambda}^* \chi_3^*,
\end{align}
where $\chi_3^* \in \cF_3(\A^3), \chi_4^* \in \cG_4(\A^4), \chi_5^* \in \cG_5(\A^5)$, $\gamma_{\Lambda}^* \in \cM_{\Lambda}^T$, and $\beta_{\Lambda}^* \in \cM_{\Lambda}$.

Conjugating both sides of (\ref{E-2nd----eqn---of-alpha^s---in---2nd---C---in---S---SL2(A)}) by $\begin{pmatrix} 0 & 1 \\ -1 & 0 \end{pmatrix}$, we deduce from Lemma \ref{L-eqn-of-alpha-after-conjugating-by-0--1--minus1---0} that
\begin{align}
\label{E---3rd---eqn----of-alpha^minus-s---in---2nd---C----in----S---SL2(A)}
\alpha^{-s} = \begin{pmatrix} \epsilon_2^{-1} & 0 \\ 0 & \epsilon_2 \end{pmatrix}\chi_5^{\diamond} \gamma_{\Lambda}^{\heartsuit} \chi_4^{\heartsuit} \beta_{\Lambda}^{\heartsuit} \chi_3^{\heartsuit},
\end{align}
where
\begin{align*}
\chi_5^{\diamond} &= \begin{pmatrix} 0 & 1 \\ -1 & 0 \end{pmatrix}\chi_5^* \begin{pmatrix} 0 & 1 \\ -1 & 0 \end{pmatrix}^{-1}, \\
\gamma_{\Lambda}^{\heartsuit} &= \begin{pmatrix} 0 & 1 \\ -1 & 0 \end{pmatrix}\gamma_{\Lambda}^* \begin{pmatrix} 0 & 1 \\ -1 & 0 \end{pmatrix}^{-1}, \\
 \chi_4^{\heartsuit} &= \begin{pmatrix} 0 & 1 \\ -1 & 0 \end{pmatrix}\chi_4^* \begin{pmatrix} 0 & 1 \\ -1 & 0 \end{pmatrix}^{-1}, \\
\beta_{\Lambda}^{\heartsuit} &= \begin{pmatrix} 0 & 1 \\ -1 & 0 \end{pmatrix}\beta_{\Lambda}^* \begin{pmatrix} 0 & 1 \\ -1 & 0 \end{pmatrix}^{-1}, \\
 \chi_3^{\heartsuit} &= \begin{pmatrix} 0 & 1 \\ -1 & 0 \end{pmatrix}\chi_3^* \begin{pmatrix} 0 & 1 \\ -1 & 0 \end{pmatrix}^{-1}.
\end{align*}
By Lemma \ref{L-M_Lambda==conjugation-of-M_Lambda} and equation (\ref{E-eqn----of-F_r-----in-----terms-------of-----G_r---in-S-basic-notions}) in Subsection \ref{SS-F_h--and--G_h}, one sees immediately that $\chi_3^{\heartsuit} \in \cG_3(\A^3), \chi_4^{\heartsuit} \in \cF_4(\A^4), \chi_5^{\diamond} \in \cF_5(\A^5)$, $\gamma_{\Lambda}^{\heartsuit} \in \cM_{\Lambda}$, and $\beta_{\Lambda}^{\heartsuit} \in \cM_{\Lambda}^T$. 

By Corollary \ref{C-epsilon-matrix-times-G_r---in------S---basic-notions}, 
\begin{align*}
\chi_9^{\heartsuit} :=  \begin{pmatrix} \epsilon_2^{-1} & 0 \\ 0 & \epsilon_2 \end{pmatrix} \chi_5^{\diamond} \in \cF_9(\A^9).
 \end{align*}
Similarly one sees that
\begin{align*}
\chi_9 :=  \begin{pmatrix} \epsilon_1 & 0 \\ 0 & \epsilon_1^{-1} \end{pmatrix} \chi_5^{\#} \in \cG_9(\A^9).
 \end{align*}

From (\ref{E-1st-eqn-of-alpha^r---in---2nd---C---in---S---SL2(A)}) and (\ref{E---3rd---eqn----of-alpha^minus-s---in---2nd---C----in----S---SL2(A)}), we deduce that
\begin{align*}
\alpha = \alpha^r\alpha^{-s} = \chi_9\gamma_{\Lambda} \chi_4 \beta_{\Lambda} \chi_3\chi_9^{\heartsuit} \gamma_{\Lambda}^{\heartsuit} \chi_4^{\heartsuit} \beta_{\Lambda}^{\heartsuit} \chi_3^{\heartsuit},
\end{align*}
which proves our contention.

\end{proof}

\begin{lemma}
\label{L-representation-of-alpha--in---SL2(A)}

Every element $\alpha \in \SL_2(\A)$ can be represented as
\begin{align*}
\alpha =  \chi_9\gamma_{\Lambda} \chi_4 \beta_{\Lambda} \chi_{11}^{\heartsuit} \gamma_{\Lambda}^{\heartsuit} \chi_4^{\heartsuit} \beta_{\Lambda}^{\heartsuit} \chi_4^{\#},
\end{align*}
where 
\begin{itemize}

\item [(i)] $\chi_4 \in \cG_4(\A^4)$, and $\chi_9 \in \cG_9(\A^9)$; 

\item [(ii)] $\chi_4^{\heartsuit} \in \cF_4(\A^4)$, and $\chi_{11}^{\heartsuit} \in \cF_{11}(\A^{11})$;

\item [(iii)] $\chi_4^{\#} \in \cG_4(\A^4)$;

\item [(iii)] $\gamma_{\Lambda}^{\heartsuit}, \beta_{\Lambda} \in \cM_{\Lambda}$, and $\gamma_{\Lambda}, \beta_{\Lambda}^{\heartsuit} \in \cM_{\Lambda}^T$.

\end{itemize}

\end{lemma}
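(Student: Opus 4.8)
The plan is to conjugate $\alpha$ by two elementary matrices so that the result lies in the set $\S_{PF}$ from the introduction, then apply Corollary~\ref{C---2nd----C----about---representation-of-alpha---in---S--SL2(A)} to that conjugate, and finally absorb the two elementary matrices back into the outer factors of the product thereby produced. Write $\alpha = \begin{pmatrix} a & b \\ c & d \end{pmatrix}$. First I would dispose of the degenerate case $a = 0$: then $bc = -1$, so $b, c \in \bF_q^{\times}$ and $\alpha = \begin{pmatrix} 0 & -c^{-1} \\ c & 0 \end{pmatrix}(c^{-1}d)_{\{1, 2\}}$, which by Lemma~\ref{L--epsilon-matrix-in-G4(A^4)--cap--F4(A^4)----in---S----basic-notions} and the product inclusions of Lemma~\ref{L-relations----between-G_r------in---------S-basic-notions} already lies in $\cG_4(\A^4)$; taking all the remaining factors to be $\begin{pmatrix} 1 & 0 \\ 0 & 1 \end{pmatrix}$ then puts $\alpha$ in the required form. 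So from now on assume $a \ne 0$; since $ad - bc = 1$ this gives $\gcd(a, b) = \gcd(a, c) = 1$.

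The main step is the choice of $u, v \in \A$. For such $u, v$,
\begin{align*}
\widetilde{\alpha} := \begin{pmatrix} 1 & 0 \\ v & 1 \end{pmatrix}\alpha\begin{pmatrix} 1 & u \\ 0 & 1 \end{pmatrix} = \begin{pmatrix} a & au + b \\ av + c & * \end{pmatrix}.
\end{align*}
Because $\gcd(a, b) = \gcd(a, c) = 1$, the progressions $\{au + b : u \in \A\}$ and $\{av + c : v \in \A\}$ satisfy the hypothesis of the function field analogue of Dirichlet's theorem; using its strong form — a monic prime of every sufficiently large degree occurs in each such progression — I would choose $u, v$ so that $\wp_1 := au + b$ and $\wp_2 := av + c$ are monic primes whose degrees $d_1 := \deg(\wp_1)$ and $d_2 := \deg(\wp_2)$ are relatively prime and exceed $\deg(a)$ (for instance $d_1, d_2$ two consecutive large integers). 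Since each $\wp_i$ is prime and does not divide $a$ (as $\gcd(a, b) = \gcd(a, c) = 1$), the $(q - 1)$-th power residue symbol of Subsection~\ref{SS-the-d-th-power-residue-symbol-in-A} furnishes units $\epsilon_i := \left(\dfrac{a}{\wp_i}\right)_{q - 1} \in \bF_q^{\times}$ with $a^{e_i} \equiv \epsilon_i \pmod{\wp_i}$, where $e_i := (q^{d_i} - 1)/(q - 1) \in \bZ_{>0}$; and since $\gcd(q^{d_1} - 1, q^{d_2} - 1) = q^{\gcd(d_1, d_2)} - 1 = q - 1$, one reads off $\gcd(e_1, e_2) = 1$. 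Hence $\widetilde{\alpha}$ satisfies conditions (i)--(iii) in the definition of $\S_{PF}$.

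Applying Corollary~\ref{C---2nd----C----about---representation-of-alpha---in---S--SL2(A)} to $\widetilde{\alpha}$ (whose top-left, top-right and bottom-left entries are $a$, $\wp_1$, $\wp_2$, with $a^{e_1} \equiv \epsilon_1 \pmod{\wp_1}$, $a^{e_2} \equiv \epsilon_2 \pmod{\wp_2}$ and $\gcd(e_1, e_2) = 1$) yields
\begin{align*}
\widetilde{\alpha} = \chi_9\,\gamma_{\Lambda}\,\chi_4\,\beta_{\Lambda}\,\chi_3\,\chi_9^{\heartsuit}\,\gamma_{\Lambda}^{\heartsuit}\,\chi_4^{\heartsuit}\,\beta_{\Lambda}^{\heartsuit}\,\chi_3^{\heartsuit}
\end{align*}
with $\chi_3 \in \cF_3(\A^3)$, $\chi_4 \in \cG_4(\A^4)$, $\chi_9 \in \cG_9(\A^9)$, $\chi_3^{\heartsuit} \in \cG_3(\A^3)$, $\chi_4^{\heartsuit} \in \cF_4(\A^4)$, $\chi_9^{\heartsuit} \in \cF_9(\A^9)$, $\gamma_{\Lambda}^{\heartsuit}, \beta_{\Lambda} \in \cM_{\Lambda}$ and $\gamma_{\Lambda}, \beta_{\Lambda}^{\heartsuit} \in \cM_{\Lambda}^T$. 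Since $\alpha = (-v)_{\{2, 1\}}\,\widetilde{\alpha}\,(-u)_{\{1, 2\}}$, it remains to merge the two outer elementary matrices into their neighbours: $(-v)_{\{2, 1\}}\chi_9 \in \cG_9(\A^9)$ and $\chi_3^{\heartsuit}(-u)_{\{1, 2\}} \in \cG_4(\A^4)$ by Lemma~\ref{L-relations----between-G_r------in---------S-basic-notions}, while $\chi_3\chi_9^{\heartsuit} \in \cF_{11}(\A^{11})$ by Lemma~\ref{L-relations-between-------F_r------in---------S-basic-notions}. Renaming these three products $\chi_9$, $\chi_4^{\#}$ and $\chi_{11}^{\heartsuit}$ gives exactly the asserted factorisation of $\alpha$.

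The hard part is the second paragraph: arranging simultaneously that $au + b$ and $av + c$ are prime and of coprime degree. This is precisely where the function field setting is exploited, since the strong Dirichlet-type statement (a prime of every large prescribed degree in an arithmetic progression) has no analogue over $\bZ$, and it is what makes condition (iii) of $\S_{PF}$ satisfiable; the degenerate case, the coprimality bookkeeping, and the monoid identities for the $\cF_r$ and $\cG_r$ are all routine once $ad - bc = 1$ is in hand.
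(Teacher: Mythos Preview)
Your proof is correct and follows essentially the same route as the paper's: split off the degenerate case $a=0$, then for $a\ne 0$ use the strong function-field Dirichlet theorem to pick $u,v$ making $au+b$ and $av+c$ primes of coprime degree, apply the $(q-1)$-th power residue symbol to verify the hypotheses of Corollary~\ref{C---2nd----C----about---representation-of-alpha---in---S--SL2(A)} for $v_{\{2,1\}}\alpha u_{\{1,2\}}$, and absorb the two outer elementary matrices and the middle $\chi_3\chi_9^{\heartsuit}$ into $\cG_9$, $\cG_4$, $\cF_{11}$ respectively. The only cosmetic differences are that you impose the harmless extra condition $\deg\wp_i>\deg a$ and that the absorption $(-v)_{\{2,1\}}\chi_9\in\cG_9(\A^9)$ and $\chi_3^{\heartsuit}(-u)_{\{1,2\}}\in\cG_4(\A^4)$ is more directly read off from the explicit form in Lemma~\ref{L-explicit-eqn-of-G_r----in----S---basic-notions} than from Lemma~\ref{L-relations----between-G_r------in---------S-basic-notions}.
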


\begin{proof}

Take any $\alpha = \begin{pmatrix} a & b \\ c & d \end{pmatrix} \in \SL_2(\A)$. We consider the following two cases:

$\star$ \textit{Case 1. $a = 0$.}

Since $\alpha \in \SL_2(A)$, we see that $b = -\epsilon$ and $c = \epsilon^{-1}$ for some unit $\epsilon \in \bF_q^{\times}$. One can write
\begin{align}
\label{E--eqn-of-alpha--in-the-case-when-a--==---0---in---main--L-----S---SL2(A)}
\alpha =  \begin{pmatrix} 0 & -\epsilon \\ \epsilon^{-1} & d \end{pmatrix} = \begin{pmatrix} 0 & -\epsilon \\ \epsilon^{-1} & 0 \end{pmatrix}(\epsilon d)_{\{1, 2\}} =  \chi_9\gamma_{\Lambda} \chi_4 \beta_{\Lambda} \chi_{11}^{\heartsuit} \gamma_{\Lambda}^{\heartsuit} \chi_4^{\heartsuit} \beta_{\Lambda}^{\heartsuit} \chi_4^{\#},
\end{align}
where
\begin{align*}
\chi_9 = \begin{pmatrix} 0 & -\epsilon \\ \epsilon^{-1} & 0 \end{pmatrix}(\epsilon d)_{\{1, 2\}},
\end{align*}
and
\begin{align*}
\gamma_{\Lambda} = \chi_4 = \beta_{\Lambda} = \chi_{11}^{\heartsuit} = \gamma_{\Lambda}^{\heartsuit} = \chi_4^{\heartsuit} = \beta_{\Lambda}^{\heartsuit} = \chi_4^{\#} = {\bf1}_2 = \begin{pmatrix} 1 & 0 \\ 0 & 1 \end{pmatrix}.
\end{align*}

Lemmas \ref{L--epsilon-matrix-in-G4(A^4)--cap--F4(A^4)----in---S----basic-notions}$(ii)$ and \ref{L-explicit-eqn-of-G_r----in----S---basic-notions} imply that 
\begin{align*}
\chi_9 = \begin{pmatrix} 0 & -\epsilon \\ \epsilon^{-1} & 0 \end{pmatrix}(\epsilon d)_{\{1, 2\}} \in \cG_4(\A^4),
\end{align*}
and it thus follows from Lemma \ref{L-relations----between-G_r------in---------S-basic-notions}$(i)$ that $\chi_9 \in \cG_9(\A^9)$. Lemma \ref{L-representation-of-alpha--in---SL2(A)} then follows immediately from (\ref{E--eqn-of-alpha--in-the-case-when-a--==---0---in---main--L-----S---SL2(A)}).

$\star$ \textit{Case 2. $a \ne 0$.}

By Rosen \cite[Theorem {\bf 4.8}]{Rosen}, there exist $u, v \in \A$ such that $au + b$, $av + c$ are primes and $\gcd(\deg(au + b), \deg(av + c)) = 1$. Set 
\begin{align*}
\wp_1 &= au + b, \\
\wp_2 &= av + c, \\
e_1 &= \dfrac{q^{\deg(\wp_1)} - 1}{q - 1}, \\
e_2 &= \dfrac{q^{\deg(\wp_2)} - 1}{q - 1}.
\end{align*}
The choice of $u, v$ implies that $\gcd(\deg(\wp_1), \deg(\wp_2)) = 1$.

We see that
\begin{align*}
\gcd(q^{\deg(\wp_1)} - 1, q^{\deg(\wp_2)} - 1) = q^{\gcd(\deg(\wp_1), \deg(\wp_2))} - 1 = q - 1,
\end{align*}
and thus
\begin{align}
\label{E--gcd--of--e1-and-e2-is-1--in--the-main-L-of---S--SL2(A)}
\gcd(e_1, e_2) = 1.
\end{align}

Set
\begin{align*}
\epsilon_1 = \left(\dfrac{a}{\wp_1}\right)_{q - 1} \in \bF_q^{\times}, \\
\epsilon_2 = \left(\dfrac{a}{\wp_2}\right)_{q - 1} \in \bF_q^{\times},
\end{align*}
where the $\left(\dfrac{\cdot}{\wp_i}\right)_{q - 1}$ denotes the $(q - 1)$-th power residue symbol. It is well-known (see Rosen \cite[Chapter 3]{Rosen} or Subsection \ref{SS-the-d-th-power-residue-symbol-in-A}) that 
\begin{align}
\label{E--1st--eqn--in--main--L--in--S---SL2(A)}
a^{e_1}  \equiv \epsilon_1 \pmod{\wp_1},
\end{align}
and
\begin{align}
\label{E--2nd---eqn--in--main--L--in--S---SL2(A)}
a^{e_2}  \equiv \epsilon_2 \pmod{\wp_2}.
\end{align}

We see that
\begin{align}
\label{E--3rd--eqn-in--main--L---S--SL2(A)}
v_{\{2, 1\}}\alpha u_{\{1, 2\}} = \begin{pmatrix} a & au + b \\ av + c & (av + c)u + bv + d \end{pmatrix} = \begin{pmatrix} a & \wp_1 \\ \wp_2 & (av + c)u + bv + d \end{pmatrix}.
\end{align}

Using (\ref{E--gcd--of--e1-and-e2-is-1--in--the-main-L-of---S--SL2(A)}), (\ref{E--1st--eqn--in--main--L--in--S---SL2(A)}), (\ref{E--2nd---eqn--in--main--L--in--S---SL2(A)}), and applying Corollary \ref{C---2nd----C----about---representation-of-alpha---in---S--SL2(A)} with $v_{\{2, 1\}}\alpha u_{\{1, 2\}}, e_1, e_2$ in the roles of $\alpha, r, s$, respectively, one can write
\begin{align*}
v_{\{2, 1\}}\alpha u_{\{1, 2\}} =  \chi_9^{\#}\gamma_{\Lambda} \chi_4 \beta_{\Lambda} \chi_3\chi_9^{\heartsuit} \gamma_{\Lambda}^{\heartsuit} \chi_4^{\heartsuit} \beta_{\Lambda}^{\heartsuit} \chi_3^{\heartsuit, \#},
\end{align*}
where $\chi_3 \in \cF_3(\A^3)$, $\chi_4 \in \cG_4(\A^4)$, $\chi_9^{\#} \in \cG_9(\A^9)$, $\chi_3^{\heartsuit, \#} \in \cG_3(\A^3)$, $\chi_4^{\heartsuit} \in \cF_4(\A^4)$, $\chi_9^{\heartsuit} \in \cF_9(\A^9)$, $\gamma_{\Lambda}^{\heartsuit}, \beta_{\Lambda} \in \cM_{\Lambda}$, and $\gamma_{\Lambda}, \beta_{\Lambda}^{\heartsuit} \in \cM_{\Lambda}^T$. The above equation implies that
\begin{align}
\label{E-4th---eqn--of--alpha--in--main--L--S---SL2(A)}
\alpha =  \chi_9\gamma_{\Lambda} \chi_4 \beta_{\Lambda}\chi_{11}^{\heartsuit} \gamma_{\Lambda}^{\heartsuit} \chi_4^{\heartsuit} \beta_{\Lambda}^{\heartsuit} \chi_4^{\#},
\end{align}
where
\begin{align*}
\chi_9 &= (-v)_{\{2, 1\}}\chi_9^{\#},\\
\chi_{11}^{\heartsuit} &= \chi_3\chi_9^{\heartsuit}, \\
\chi_4^{\#} &= \chi_3^{\heartsuit, \#} (-u)_{\{1, 2\}}.
\end{align*}

Since $\chi_3^{\heartsuit, \#} \in \cG_3(\A^3)$, the definition of $\cG_i$ and Lemma \ref{L-explicit-eqn-of-G_r----in----S---basic-notions} imply that $\chi_9 \in \cG_9(\A^9)$ and $\chi_4^{\#} \in \cG_4(\A^4)$. Furthermore Lemma \ref{L-relations-between-------F_r------in---------S-basic-notions} implies that $\chi_{11}^{\heartsuit} \in \cF_{11}(\A^{11})$. Hence Lemma \ref{L-representation-of-alpha--in---SL2(A)} follows from (\ref{E-4th---eqn--of--alpha--in--main--L--S---SL2(A)}).

\end{proof}

We now prove our main theorem in this paper.

\begin{theorem}
\label{T-SL2(A)--is--a--polynomial-family--in--S--SL2(A)}

$\SL_2(\A)$ is a polynomial family with 52 variables.

\end{theorem}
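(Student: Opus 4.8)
The plan is to assemble the parametrization directly out of Lemma~\ref{L-representation-of-alpha--in---SL2(A)} together with the polynomial matrices already constructed in Section~\ref{S-basic-notions}. Lemma~\ref{L-representation-of-alpha--in---SL2(A)} shows that every $\alpha \in \SL_2(\A)$ factors as
\begin{align*}
\alpha = \chi_9\gamma_{\Lambda}\chi_4\beta_{\Lambda}\chi_{11}^{\heartsuit}\gamma_{\Lambda}^{\heartsuit}\chi_4^{\heartsuit}\beta_{\Lambda}^{\heartsuit}\chi_4^{\#},
\end{align*}
with $\chi_9\in\cG_9(\A^9)$, $\chi_4,\chi_4^{\#}\in\cG_4(\A^4)$, $\chi_{11}^{\heartsuit}\in\cF_{11}(\A^{11})$, $\chi_4^{\heartsuit}\in\cF_4(\A^4)$, $\gamma_{\Lambda}^{\heartsuit},\beta_{\Lambda}\in\cM_{\Lambda}$, and $\gamma_{\Lambda},\beta_{\Lambda}^{\heartsuit}\in\cM_{\Lambda}^T$. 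Each of these nine factor-sets is the image of a fixed polynomial matrix defined over $\bZ$: the sets $\cG_r(\A^r)$ and $\cF_r(\A^r)$ are by definition images of the explicit polynomial maps $\cG_r$, $\cF_r$ (Subsection~\ref{SS-F_h--and--G_h}); by~(\ref{E--M_Lambda--subset--of---Lambda(A5)--in--SS-Lambda-M_Lambda}) we have $\cM_{\Lambda}\subset\Lambda(\A^5)$; and by~(\ref{E--M_Lambda^T--subset--of--Lambda^T(A5)------SS--Lambda--M_Lambda}) we have $\cM_{\Lambda}^T\subset\Lambda^T(\A^5)$. Composing a product of polynomial maps is again a polynomial map, so substituting these nine maps into the factorization yields a single polynomial matrix $\mathfrak{F}$ whose image contains $\SL_2(\A)$; since each factor lies in $\SL_2(\A)$, the image is contained in $\SL_2(\A)$, hence equals it.

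The substantive remaining task is the bookkeeping of the variable count. First I would tally the raw total: the nine factors consume $9+5+4+5+11+5+4+5+4 = 52$ parameters, which is already exactly the claimed number, so no further compression is needed — one simply takes $\mathfrak{F}\in\SL_2(\A[x_1,\ldots,x_{52}])$ to be the product
\begin{align*}
\mathfrak{F} = \cG_9\cdot\Lambda^T\cdot\cG_4\cdot\Lambda\cdot\cF_{11}\cdot\Lambda\cdot\cF_4\cdot\Lambda^T\cdot\cG_4,
\end{align*}
each factor being evaluated at its own disjoint block of variables. One must check that $\cM_{\Lambda}$ (resp.\ $\cM_{\Lambda}^T$) really is hit by $\Lambda$ (resp.\ $\Lambda^T$) on all of $\A^5$ — but this is precisely~(\ref{E--M_Lambda--subset--of---Lambda(A5)--in--SS-Lambda-M_Lambda}) and~(\ref{E--M_Lambda^T--subset--of--Lambda^T(A5)------SS--Lambda--M_Lambda}) — and that the four factors living in $\cG_4$, $\cG_9$, $\cF_4$, $\cF_{11}$ are genuinely surjective onto those sets, which holds by the very definition of $\cG_r(\A^r)$ and $\cF_r(\A^r)$.

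I then record the conclusion: the polynomial entries $\cP_1,\cP_2,\cP_3,\cP_4\in\A[x_1,\ldots,x_{52}]$ of $\mathfrak{F}$ satisfy $\SL_2(\A)=\mathfrak{F}(\A^{52})$, so $\SL_2(\A)$ is a polynomial family with $52$ parameters, proving Theorem~\ref{Thm-the-1st-main-thm-in-the-introduction}. I would also note, as promised in the introduction, that every matrix used ($\cF_r$, $\cG_r$, $\Lambda$, $\Lambda^T$) has rational integral coefficients, so $\mathfrak{F}$ is in fact a morphism $\bA^{52}\to\SL_2$ defined over $\bZ$.

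The only real obstacle here is making sure the arithmetic of the variable counts lands on $52$ with the factorization exactly as stated in Lemma~\ref{L-representation-of-alpha--in---SL2(A)}; all the genuine mathematical content (the power-residue-symbol arguments, the function-field Dirichlet theorem, the Cayley--Hamilton manipulations) has already been discharged in Lemmas~\ref{L-1st-lemma-in-S-SL2(A)}--\ref{L-representation-of-alpha--in---SL2(A)} and Corollaries~\ref{C-1st-corollary--about---representation-of-alpha^r---in---S---SL2(A)}--\ref{C---2nd----C----about---representation-of-alpha---in---S--SL2(A)}. So this last step is essentially an assembly argument: glue the nine polynomial maps end to end, verify the total is $52$, and observe the image is exactly $\SL_2(\A)$.
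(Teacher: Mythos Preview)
Your proof is correct and essentially identical to the paper's own argument: the paper defines $\Omega = \cG_9\Lambda^T\cG_4\Lambda\cF_{11}\Lambda\cF_4\Lambda^T\cG_4$, invokes Lemma~\ref{L-representation-of-alpha--in---SL2(A)} together with the inclusions $\cM_{\Lambda}\subset\Lambda(\A^5)$ and $\cM_{\Lambda}^T\subset\Lambda^T(\A^5)$, and concludes $\SL_2(\A)=\Omega(\A^{52})$. Your $\mathfrak{F}$ is precisely the paper's $\Omega$, and your more explicit verification of the variable count and of the two-sided inclusion $\SL_2(\A)=\mathfrak{F}(\A^{52})$ simply spells out what the paper leaves implicit.
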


\begin{proof}

Let $\Omega$ be the polynomial matrix defined by
\begin{align*}
\Omega = \cG_9\Lambda^T\cG_4\Lambda \cF_{11}\Lambda\cF_4\Lambda^T\cG_4.
\end{align*}
We see that $\Omega$ has 52 variables. Using Lemma \ref{L-representation-of-alpha--in---SL2(A)}, and recalling that $\cM_\Lambda \subset \Lambda(\A^5)$ and $\cM_\Lambda^T \subset \Lambda^T(\A^5)$ (see Subsection \ref{SS-Lambda-M_Lambda}), we deduce that
\begin{align*}
\SL_2(\A) = \Omega(\A^{52}),
\end{align*}
which proves our contention.

\end{proof}

\section*{Acknowledgements}

I am grateful to the referee for an extremely careful reading of this paper, and suggesting insightful remarks and very useful comments on an earlier version of this paper. I thank Leonid Vaserstein for explaining some of his remarks in \cite{Vaserstein} to me. I would like to thank my parents, Nguyen Ngoc Quang and Phan Thi Thien Huong, for their constant support over the years.

\end{document}